\numberwithin{equation}{section}
\newtheorem*{rep@theorem}{\rep@title}
\newcommand{\newreptheorem}[2]{%
\newenvironment{rep#1}[1]{%
 \def\rep@title{#2 \ref{##1}}%
 \begin{rep@theorem}}%
 {\end{rep@theorem}}}
\theoremstyle{theorem}
\newtheorem{thm}{Theorem}[section]
\newtheorem*{thm*}{Theorem}
\theoremstyle{definition}
\newtheorem{prop}[thm]{Proposition}
\newtheorem*{prop*}{Proposition}
\newtheorem{defn}[thm]{Definition}
\newtheorem{lem}[thm]{Lemma}
\newtheorem{cor}[thm]{Corollary}
\newtheorem*{cor*}{Corollary}
\theoremstyle{remark}
\newtheorem{rem}[thm]{Remark}
\title{On characteristic forms of positive vector bundles, \\
mixed discriminants and pushforward identities.} 
\author
{Siarhei Finski
}
\date{}
\newcommand{\imun} {\sqrt{-1}}
\newcommand{\comp}{\mathbb{C}}
\newcommand{\real}{\mathbb{R}}
\newcommand{\nat}{\mathbb{N}}
\newcommand{\integ}{\mathbb{Z}}
\newcommand{\enmr}[1]{\text{End}{(#1)}}
\newcommand{\dbar}{ \overline{\partial} }
\newcommand{\rk}[1]{{\rm{rk}} ( #1 )}
\newcommand{\tr}[1]{{\rm{Tr}} \big[ #1 \big]}
\newcommand{\scal}[2]{\langle #1, #2 \rangle}
\renewcommand*\env@matrix[1][\arraystretch]{%
  \edef\arraystretch{#1}%
  \hskip -\arraycolsep
  \let\@ifnextchar\new@ifnextchar
  \array{*\c@MaxMatrixCols c}}
\newenvironment{sciabstract}{}
\begin{document} 
\maketitle 

\vspace*{-0.25cm}
\begin{sciabstract}
  \textbf{Abstract.} 
  We prove that Schur polynomials in Chern forms of Nakano and dual Nakano positive vector bundles are positive as differential forms.
  Moreover, modulo a statement about the positivity of a “double mixed discriminant" of linear operators on matrices, which preserve the cone of positive definite matrices, we establish that Schur polynomials in Chern forms of Griffiths positive vector bundles are weakly-positive as differential forms. This provides differential-geometric versions of Fulton-Lazarsfeld inequalities for ample vector bundles.
  \par An interpretation of positivity conditions for vector bundles through operator theory is in the core of our approach.
  Another important step in our proof is to establish a certain pushforward identity for characteristic forms, refining the determinantal formula of Kempf-Laksov for homolorphic vector bundles on the level of differential forms.
	 In the same vein, we establish a local version of Jacobi-Trudi identity.
	\par Then we study the inverse problem and show that already for vector bundles over complex surfaces, one cannot characterize Griffiths positivity (and even ampleness) through the positivity of Schur polynomials, even if one takes into consideration all quotients of a vector bundle.

\end{sciabstract}

\pagestyle{fancy}
\lhead{}
\chead{On positivity of polynomials in Chern forms.}
\rhead{\thepage}
\cfoot{}


\newcommand{\Addresses}{{
  \bigskip
  \footnotesize
  \noindent \textsc{Siarhei Finski, Institut Fourier - Université Grenoble Alpes, France.}\par\nopagebreak
  \noindent  \textit{E-mail }: \texttt{finski.siarhei@gmail.com}.
}}

\vspace*{-0.3cm}
\tableofcontents

\section{Introduction}\label{sect_intro}

	The main goal of this paper is to study the relation between positivity of characteristic forms and positivity of vector bundles.
	\par 
	To fix the notation, let $X$ be a smooth complex manifold of dimension $n$. 
	Let $(E, h^E)$ be a Hermitian vector bundle of rank $r$ over $X$, and let $R^E := (\nabla^E)^2$ be the curvature of the Chern connection $\nabla^E$ of $(E, h^E)$. 
	We consider the Chern forms $c_i(E, h^E)$, $i = 0, \ldots, r$, defined by
	\begin{equation}\label{eq_defn_chern}
		\det \Big( {\rm{Id}}_E + \frac{\imun t R^E}{2 \pi} \Big)
		=
		\sum_{i = 0}^{r}
		c_i(E, h^E) t^i.
	\end{equation}
	By Chern-Weil theory, for $i = 0, \ldots, r$, the Chern form $c_i(E, h^E)$ is a $d$-closed real $(i, i)$-form on $X$, lying in the cohomology class of the $i$-th Chern class of $E$, denoted here by $c_i(E)$. 
	\par 
	Fix $k \in \nat$ and denote by $\Lambda(k, r)$ the set of all partitions of $k$ by decreasing non-negative integers $\leq r$, i.e. $a \in \Lambda(k, r)$ is a sequence $r \geq a_1 \geq a_2 \geq \ldots \geq a_k \geq 0$ such that 
	\begin{equation}
		a_1 + \ldots + a_k = k.
	\end{equation}
	Each $a \in \Lambda(k, r)$ gives rise to a \textit{Schur polynomial} $P_a \in \integ[c_1, \ldots, c_r]$ of weighted degree $2k$ (with $\deg c_i = 2i$), defined through the following determinant
	\begin{equation}\label{defn_schur}
		P_a(c) = \det(c_{a_i-i+j})_{i, j = 1}^{k},
	\end{equation}
	where by convention $c_0 = 1$ and $c_i = 0$ if $i > r$ or $i < 0$. 
	Schur polynomials $P_a$, for $a \in \Lambda(k, r)$, form a basis of the vector space of polynomials of weighted degree $2k$.
	\par
	Now, for every Hermitian vector bundle $(E, h^E)$ of rank $r$ over $X$, $k \in \nat$, $k \leq n$, and $a \in \Lambda(k, r)$, we consider a differential form on $X$, obtained by substitution of $c_i(E, h^E)$ for $c_i$ in (\ref{defn_schur}).
	The resulting $(k, k)$-differential form, $P_a(c(E, h^E))$, which we later call a \textit{Schur form}, is closed. We denote the associated cohomological class by $P_a(c(E))$. Interesting examples are
	\begin{equation}\label{eq_ex_schur}
	P_{a}(c(E)) =
			\begin{cases}
				\hfill c_k(E), & \text{for} \quad a = k00\ldots, \\
				\hfill \text{Segre class }s_k(E), & \text{for} \quad  a = 11\ldots10 \ldots 0,\\
				\hfill c_1(E)c_{k-1}(E) - c_k(E), & \text{for} \quad  a = k-110\ldots0.
			\end{cases}
	\end{equation}
	\par 
	Griffiths in \cite{GriffPosVect} gave a conjectural analytic description of the cone of \textit{numerically positive} homogeneous polynomials of weighted degree $2k$, $P \in \mathbb{R}[c_1, \ldots, c_r]$ (with $\deg c_i = 2i$), i.e. such that for any Griffiths positive vector bundle $(E, h^E)$ over a complex manifold $X$ (see Section \ref{sect_pos_conc} for a definition) and any analytical subset $V \subset X$ of complex dimension $k$, $P$ satisfies
	\begin{equation}\label{eq_num_pos}
		\int_V P(c_1(E), \ldots, c_r(E)) > 0.
	\end{equation}
	In \cite{BlochGiesChern}, Bloch-Gieseker proved that Chern classes of ample vector bundles (see Section \ref{sect_pos_conc} for a definition) satisfy (\ref{eq_num_pos}).
	By using this result and Kempf-Laksov \cite{KemLak} formula, Fulton-Lazarsfeld in \cite{FulLazPos} proved that (\ref{eq_num_pos}) holds for any ample vector bundle $E$ and any Schur polynomial $P$.
	This extends the previous works of Kleiman \cite{Kleiman} for surfaces, Gieseker \cite{Gieseker} for monomials of Chern classes, Usui-Tango \cite{UsuiTango} for ample and globally generated $E$.
	Fulton-Lazarsfeld also proved in \cite{FulLazPos} that if (\ref{eq_num_pos}) holds for any ample $E$, then $P$ should be a non-zero linear combination with non-negative coefficients of Schur polynomials. 
	Demailly-Peternell-Schneider in \cite{DPSNef} extended (\ref{eq_num_pos}) to nef vector bundles on compact Kähler manifolds.
	\par 
	Griffiths in \cite[p. 247]{GriffPosVect} proposed to refine the positivity in (\ref{eq_num_pos}) in differential-geometric sense. 
	For this, recall that a $(i, i)$-differential form $\alpha$ on $X$ is called \textit{weakly-non-negative} (resp. \textit{weakly-positive}), see Reese-Knapp \cite{ReeseKnapp}, if at any point $x \in X$, the restriction of $\alpha$ to any $i$-dimensional complex plane in $T_xX$ gives a non-negative (resp. positive) volume form with respect to the canonical orientation class of the complex structure.
	Using the description of the Griffiths cone due to Fulton-Lazarsfeld \cite[Appendix A]{FulLazPos}, we can reformulate the question from \cite[p. 247]{GriffPosVect}.
	\vspace*{0.15cm}
	\par 
	\noindent
	\textbf{Question of Griffiths.}
	Let $P \in \real[c_1, \ldots, c_r]$ be a non-zero non-negative linear combination of Schur polynomials of weighted degree $k$. Are the forms $P(c_1(E, h), \ldots, c_r(E, h))$ weakly-positive for any Griffiths positive vector bundle $(E, h)$ over a complex manifold $X$ of dimension $n$, $n \geq k$?
	\par 
	\vspace*{0.15cm}
	Studying this question is one of the main goals of this article.
	To formulate our first result, recall that a $(i, i)$-differential form $\alpha$ on $X$ is called \textit{positive} (resp. \textit{non-negative}), cf. \cite{ReeseKnapp}, if for any non-zero $(n-i, 0)$-form $\beta$, the form $\alpha \wedge (\imun)^{(n - i)^2} \beta \wedge \overline{\beta}$ is positive (resp. non-negative).
	For $i = 0, 1, n-1, n$, both concepts of positivity for differential forms coincide.
	For other $i$, the inclusion of positive $(i, i)$-forms into weakly-positive is strict, see \cite[Corollary 1.6]{ReeseKnapp}. 
	The products of positive forms is positive, cf. \cite[Corollary 1.3]{ReeseKnapp}. The analogous statement for weakly-positive forms is known to be false, \cite[Proposition 1.5]{ReeseKnapp} (see, however, {B{\l}ocki}-{Pli\'s} \cite{BlockiPlis} for a related result).
	\begin{thm}\label{thm_pos}
		Let $(E, h^E)$ be a (dual)  Nakano positive (resp. non-negative, see Section \ref{sect_pos_conc} for definitions) vector bundle of rank $r$ over a complex manifold $X$ of dimension $n$.
		Then for any $k \in \nat$, $k \leq n$, and $a \in \Lambda(k, r)$, the $(k, k)$-form $P_a(c(E, h^E))$ is positive (resp. non-negative).
	\end{thm}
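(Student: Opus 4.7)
The plan is to combine two independent ingredients: (i) a differential-form refinement of the Kempf--Laksov determinantal formula — the ``pushforward identity'' announced in the abstract — writing $P_a(c(E,h^E))$ as the proper fiber integral $\pi_*\Omega_a$ for an explicit form $\Omega_a$ on an auxiliary flag bundle $\pi : Y_a \to X$, and (ii) an operator-theoretic statement that the top Chern form of a Nakano positive Hermitian vector bundle is a positive differential form. Since fiber integration by a proper holomorphic submersion preserves positivity (resp.\ non-negativity) of smooth forms (the pairing with a weakly positive test form on $X$ lifts by $\pi^*$ to a weakly positive form on $Y_a$, and the wedge of a positive form with a weakly positive form is weakly positive and so integrates non-negatively), these two ingredients together yield the theorem: $\Omega_a$ being positive on $Y_a$ forces $\pi_*\Omega_a = P_a(c(E,h^E))$ to be positive on $X$.

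For (i), the classical Kempf--Laksov formula identifies, in cohomology, $P_a(c(E))$ with the pushforward from a suitable flag bundle $Y_a$ of a top Chern class of a canonical bundle $F_a$ built from tautological quotients of $\pi^*E$. I would refine this to forms by endowing these tautological quotients with the metrics induced from $h^E$ by orthogonal projection, computing the resulting Chern forms in a horizontal/vertical splitting of $TY_a$, and carrying out the fiberwise integration explicitly. This local computation is in spirit close to Bott--Chern / Jacobi--Trudi type identities and to Mourougane's treatment of Segre forms. A critical subpoint is that the specific combination appearing in $\Omega_a$ has to reduce to a Nakano-positive top Chern form: since Nakano positivity is not preserved under arbitrary quotients, the construction of $Y_a$ and $F_a$ must be arranged so that the negative second-fundamental-form contribution is either absent or is algebraically cancelled in the determinantal combination forming $\Omega_a$.

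For (ii), with $\Theta^F := \tfrac{\imun}{2\pi} R^F$ viewed as an $\mathrm{End}(F)$-valued $(1,1)$-form, one has $c_{\rk{F}}(F, h^F) = \det_F \Theta^F$ as a wedge determinant. When restricted to any $\rk{F}$-dimensional complex plane $W \subset T_y Y_a$, this unfolds into a polynomial in the curvature tensor seen as a positive Hermitian form on $T Y_a \otimes F$, and the resulting expression is precisely a mixed discriminant in the classical sense of Alexandrov. Its positivity for positive Hermitian arguments is the operator-theoretic core of the approach and is where the Nakano hypothesis is truly used. The dual Nakano case follows by a symmetric argument, and the non-negative versions are obtained by a standard perturbation: replacing $h^E$ by $h^E e^{-\varepsilon\varphi}$ for a strictly plurisubharmonic $\varphi$, applying the strict case, and letting $\varepsilon \to 0$. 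The principal obstacle I foresee is twofold: proving the Kempf--Laksov identity at the level of forms demands a combinatorially delicate fiberwise computation with the right metrics, while the mixed-discriminant positivity of the top Chern form — the new operator-theoretic input advertised in the abstract — is what genuinely distinguishes the (dual) Nakano case from the conjectural Griffiths case mentioned afterwards in the paper.
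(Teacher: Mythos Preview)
Your high-level strategy matches the paper's: refine Kempf--Laksov to a pointwise identity of forms, reduce to positivity of a top Chern form on the auxiliary space, and push forward. But two points in your sketch are either gaps or misidentifications of the key step.

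First, your ingredient (ii) only yields \emph{weak} positivity. Restricting $c_{\rk F}(F,h^F)$ to an $\rk F$-dimensional plane and invoking a mixed-discriminant inequality controls the form on decomposable directions only; that is exactly the paper's Theorem~\ref{thm_id_rel_top_ch_ddiscr}, which feeds into the \emph{Griffiths} case (where the question remains open). For the (dual) Nakano case the paper proves genuine positivity via a different identity (Theorem~\ref{thm_id_c_tilde}): one exhibits an injection $\Psi:\Lambda^r T^{1,0}_xX\to \mathrm{Sym}^r(T^{1,0}_xX\otimes F^*_x)\otimes\Lambda^r F_x$ and a left inverse $\Psi^{-1}$ such that the operator on $\Lambda^r T^{1,0}_xX$ associated to $c_r(F,h^F)$ equals $(\imun)^{r^2}\Psi^{-1}\circ (P^{F*}_x)^{\otimes r}\circ\Psi$. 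Since $P^{F*}_x$ is positive definite by hypothesis, its tensor power is too, and this gives positivity in the strong (Reese--Knapp) sense, not merely on planes. Your Alexandrov-type argument would not deliver the conclusion as stated.

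Second, you correctly flag that Nakano positivity is not inherited by quotients, but you do not resolve it; you hope the construction can be ``arranged'' so the second fundamental form cancels. The paper does not avoid the second fundamental form. It separates the two cases: for \emph{dual} Nakano positivity, quotients do inherit the condition (Proposition~\ref{prop_quot_dual_nak_pos}), so $(Q,h^Q)$ is dual Nakano non-negative and strictly positive near the zero-section locus, and Corollary~\ref{cor_pos_top_ch_dnpos1} applies directly. For \emph{Nakano} positivity, the paper uses Mourougane's explicit curvature formula for $Q$ to show that $R^Q$ splits as a horizontal piece inheriting Nakano positivity from $E$ plus a vertical Fubini--Study piece which is dual Nakano non-negative; a refined pushforward identity (Theorem~\ref{thm_nak_push}) then shows that after contracting with the fiber volume only the horizontal Nakano-positive contribution survives in the operator $\tilde d^H$, and one concludes as before. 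Your proposal would need this kind of concrete curvature analysis rather than a hoped-for cancellation.
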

	\begin{rem}\label{rem_thm_pos}
		This theorem permits to construct positive characteristic forms for Griffiths positive vector bundles $(E, h^E)$, as from Demailly-Skoda \cite{DemSkodaGrNak}, the induced metric on $E \otimes \det E$ is Nakano positive.
		See Berndtsson \cite{BerndAnnMath}, Liu-Sun-Yang \cite{LiuSunYang} for related results on ample vector bundles.
	\end{rem}
	\par 
	To formulate our second result, recall that for a complex vector space $V$, $\dim V = r$, the \textit{mixed discriminant} ${{\rm{D}}}_V : \enmr{V}^{\otimes r} \to \comp$ was defined by Alexandroff \cite[\S 1]{Alex4} as the polarization of the determinant.
	In coordinates, for matrices $A^{i} := (a^{i}_{kl})_{k,l = 1}^{r}$, $i = 1, \ldots, r$, we have 
	\begin{equation}\label{eq_mixed_discr_defn}
		{{\rm{D}}}_V(A^1, \ldots, A^r)
		=
		\frac{1}{r!} \sum_{\sigma \in S_r} \det (a^{\sigma(i)}_{ik} )_{i, k = 1}^{r},
	\end{equation}
	where $S_r$ is the permutation group on $\{1, 2, \ldots, r \}$.
	We use the natural duality $\enmr{V} \simeq \enmr{V}^*$ and denote by ${{\rm{D}}}_V^* : \comp \to \enmr{V}^{\otimes r}$ the dual of ${{\rm{D}}}_V$.
	\par 
			We say that an operator $H : {\rm{End}}(V) \to {\rm{End}}(W)$ is \textit{non-negative} (resp. \textit{positive}) if it sends non-zero positive semidefinite endomorphisms of $V$ to positive semidefinite (resp. positive definite) endomorphisms of $W$.
	\par 
	\vspace*{0.3cm}
	\noindent
	\textbf{Open problem.}
	Let $H : {\rm{End}}(V) \to {\rm{End}}(W)$ be positive, and $\dim V = \dim W$. Is it true that ${{\rm{D}}}_W \circ H^{\otimes \dim W} \circ {{\rm{D}}}_V^{*} \in \real$ is positive?
	\vspace*{0.3cm}
	\par In the special case when $H$ is zero on non-diagonal matrices for some basis of $V$, a non-negative version of Open problem holds due to the Alexandroff inequality \cite[\S 1]{Alex4}, stating non-negativity of the mixed discriminant for positive semidefinite matrices.
	See also Panov \cite{PanovMixedDiscr}, Bapat \cite{Bapat} for refinements of this inequality and Florentin-Milman-Schneider \cite{FlorMilSchn} for a characterization of the mixed discriminant through it.
	\begin{thm}\label{thm_pos_gr}
	The answers to Open problem and Question of Griffiths coincide.
	\end{thm}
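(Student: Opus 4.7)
The strategy is to reduce weak positivity of $P_a(c(E, h^E))$ at a point $x \in X$ along a complex $k$-plane $\Pi \subset T_xX$ to a pointwise algebraic statement about the curvature, and match it with the Open problem. Since such weak positivity depends only on the restriction of $(E, h^E)$ to any complex $k$-dimensional submanifold tangent to $\Pi$ at $x$, we may assume $\dim X = k$ and $\Pi = T_xX$, so that the task becomes to decide the sign of the top-degree form $P_a(c(E, h^E))|_x$ against the canonical orientation of $T_xX$.

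\textbf{Open problem $\Rightarrow$ Question of Griffiths.}
Using the local pushforward identity refining Kempf--Laksov established earlier in the paper, one expresses $P_a(c(E, h^E))$ as the fibre integral $\pi_*$ along a suitable flag bundle $\pi : \mathcal{F} \to X$ of the top Chern form $c_{\dim \mathcal{F}}(\tilde E, h^{\tilde E})$ of an auxiliary bundle $\tilde E$ on $\mathcal{F}$ of rank $\dim_\comp \mathcal{F}$, which inherits Griffiths positivity from $E$ at the level of curvature. Since fibre integration over compact complex fibres preserves weak positivity of top-degree forms, it suffices to establish weak positivity of $c_r(F, h^F)$ at a point $y$ of a Griffiths positive bundle $F$ of rank $r$ over a base of complex dimension $r$. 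Setting $V = T_y^{1,0}Y$, $W = F_y$, the curvature $R^F$ defines via the Hermitian metrics an operator $H : \enmr{V} \to \enmr{W}$; Griffiths positivity of $F$ at $y$ is precisely positivity of $H$ in the sense of the Open problem, since on a rank-one $vv^*$, $H(vv^*) = R^F(v, \bar v)$ is positive definite by Griffiths positivity, and any positive semidefinite matrix is a non-negative combination of such rank-ones. A direct Chern--Weil computation then identifies $c_r(F, h^F)|_y$, up to a positive multiplicative constant, with ${{\rm{D}}}_W \circ H^{\otimes r} \circ {{\rm{D}}}_V^*$: the form side of $\det(R^F/2\pi\imun)$ produces the mixed discriminant on $\enmr{V}$ and the endomorphism side produces the mixed discriminant on $\enmr{W}$.

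\textbf{Question of Griffiths $\Rightarrow$ Open problem.}
Conversely, given a positive operator $H : \enmr{V} \to \enmr{W}$ with $\dim V = \dim W = r$, realize $H$ as the Chern curvature at the origin of an explicit Griffiths positive bundle: take $E = U \times W$ on a small polydisk $U \subset V$ with Hermitian metric $h^E(z)(s, s) = \scal{s}{s} + \scal{H(zz^*)s}{s} + O(|z|^3)$, so that the Chern curvature at $0$ equals $H$. Positivity of $H$ and openness of the Griffiths condition make $(E, h^E)$ Griffiths positive on a neighbourhood of $0$. Applying the affirmative answer to the Question of Griffiths with $a = (r, 0, \ldots, 0) \in \Lambda(r, r)$---i.e.\ the top Chern form---yields $c_r(E, h^E)|_0 > 0$, which by the identification of the previous paragraph is the required positivity of ${{\rm{D}}}_W \circ H^{\otimes r} \circ {{\rm{D}}}_V^*$.

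\textbf{Main obstacle.}
The hard part is the local Kempf--Laksov pushforward identity used in the forward direction: cohomologically it is classical, but a refinement at the level of differential forms is needed, under which weak positivity is preserved by fibre integration and under which Griffiths positivity of $E$ descends to Griffiths positivity of the auxiliary bundle $\tilde E$. This is precisely the identity announced in the abstract, and where the bulk of the technical work is expected to lie. The remaining ingredients---the Chern--Weil identification of $c_r(F, h^F)$ with the double mixed discriminant, and the power-series realization of $H$ as a Chern curvature---are essentially routine.
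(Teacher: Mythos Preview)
Your proposal is correct and follows essentially the same approach as the paper: the forward direction combines the local Kempf--Laksov pushforward identity (Theorem~\ref{thm_KL}) with the identification of the top Chern form with the double mixed discriminant (Theorem~\ref{thm_id_rel_top_ch_ddiscr}), and the reverse direction realizes an arbitrary positive $H$ as a Chern curvature at a point via the explicit metric of Proposition~\ref{prop_local_const}.

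One refinement you should be aware of: the auxiliary bundle $Q$ in Theorem~\ref{thm_KL} is a quotient of $\pi^*({\rm Hom}(V_X,E)\oplus\mathcal{O})$, and the trivial summand $\mathcal{O}$ means $(Q,h^Q)$ is only Griffiths \emph{non-negative} in general, not positive as you assert. The paper handles this by proving (Proposition~\ref{prop_str_pos_q_bun}) that $(Q,h^Q)$ is strictly Griffiths positive along the locus $\mathbb{P}(0\oplus 1)$, which meets every fibre of $\pi^a$; the non-negative version of the Open problem (obtained from the positive one by continuity) then gives weak non-negativity of $c_{\rk{Q}}(Q,h^Q)$ everywhere together with strict weak positivity on this locus, and it is this combination that forces the pushforward to be strictly weakly positive. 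Without this step your argument would only yield weak non-negativity of $P_a(c(E,h^E))$.
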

	Now,  although we couldn't find a complete proof to Open problem, we have a partial result.
	\begin{sloppypar}
	\begin{prop}\label{prop_open_prob}
		The answer to Open problem is positive under any of the additional assumptions
		\par  a) Among the operators $H' \in \enmr{V \oplus W}$, $H'' \in \enmr{V \oplus W^*}$, associated by the natural isomorphisms ${\rm{Hom}}(\enmr{V}, \enmr{W}) \to \enmr{V \oplus W}$ and ${\rm{Hom}}(\enmr{V}, \enmr{W}) \to \enmr{V \oplus W^*}$ to $H$, there is at least one positive definite. 
		\par  b) Among the operators $H^{\oplus r} : \enmr{V^{\oplus r}} \to \enmr{W^{\oplus r}}$, $(H^T)^{\oplus r} : \enmr{V^{\oplus r}} \to \enmr{(W^*)^{\oplus r}}$, for $H^T(X) = H(X)^T$, $X \in \enmr{V}$,  there is at least one positive.
		\par  c) We have $\dim V = \dim W = 2$.
		\\ Also, the answer to  the non-negative version of Open problem is positive if
		\par  d) The operator $H$ sends positive definite operators to positive definite operators surjectively.
	\end{prop}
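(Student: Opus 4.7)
My approach begins by re-expressing the scalar $M_H := D_W \circ H^{\otimes r} \circ D_V^*(1)$, where $r := \dim V = \dim W$, in an operator-theoretic form. Under the canonical identification $\enmr{V}^{\otimes r} \simeq \enmr{V^{\otimes r}}$ and the trace pairing, $D_V^*(1)$ corresponds to the antisymmetrizer $A_r^{(V)} = \tfrac{1}{r!}\sum_{\sigma \in S_r} \operatorname{sgn}(\sigma)\, U_\sigma \in \enmr{V^{\otimes r}}$; because $\dim V = r$, this is the rank-one projector onto $\Lambda^r V$, so $A_r^{(V)} = |\omega_V\rangle\langle\omega_V|$ for a unit generator $\omega_V$, and analogously on $W$. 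Consequently
\begin{equation*}
    M_H = \langle\omega_W \mid H^{\otimes r}\bigl(|\omega_V\rangle\langle\omega_V|\bigr) \mid \omega_W\rangle,
\end{equation*}
reducing the Open problem to estimating a single diagonal matrix element.

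For (a) and (b), the idea is that the hypothesis encodes (strict) complete positivity of $H$ or its transpose. In (a), the Choi--Jamio\l kowski construction identifies $\operatorname{Hom}(\enmr{V}, \enmr{W})$ with $\enmr{V \otimes W}$ (and with $\enmr{V \otimes W^*}$ after composing with $T$), so that positive definiteness of $H'$ (resp.\ $H''$) amounts to the Choi matrix $\mathcal{C}_H = \sum_{ij} E_{ij} \otimes H(E_{ij})$ (resp.\ its co-version) being positive definite. In (b), Choi's theorem asserts that positivity of $H^{\oplus r}$ is equivalent to $r$-positivity of $H$, which coincides with complete positivity for $r = \dim V$; the variant for $H^T$ is symmetric. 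Either way, $H$ admits a Kraus decomposition $H(X) = \sum_k A_k X A_k^*$ with $\{A_k\}$ spanning $\operatorname{Hom}(V, W)$. A direct calculation gives
\begin{equation*}
    M_H = \sum_{k_1, \ldots, k_r} \bigl|\widetilde{D}(A_{k_1}, \ldots, A_{k_r})\bigr|^2,
\end{equation*}
where $\widetilde{D}$ is the polarization of the determinant (after fixing $V \simeq W$). The spanning property yields some $A_k$ of full rank, making $|\det A_k|^2 > 0$, hence $M_H > 0$. The co-CP case uses that $T^{\otimes r}$ fixes $A_r^{(V)}$.

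For (c), expanding $A_2^{(V)}$ in the matrix-unit basis produces the explicit formula
\begin{equation*}
    M_H = D_W(H(E_{11}), H(E_{22})) - D_W(H(E_{12}), H(E_{21})),
\end{equation*}
and, writing $A = H(E_{11})$, $B = H(E_{22})$, $C = H(E_{12})$ with $H(E_{21}) = C^*$ by Hermitianity preservation, the required inequality reduces to $(\operatorname{tr} A)(\operatorname{tr} B) - \operatorname{tr}(AB) > |\operatorname{tr} C|^2 - \|C\|_{HS}^2$. This is the main obstacle, since a positive $H \colon \enmr{V} \to \enmr{W}$ with $\dim V = \dim W = 2$ need not be CP, so the sum-of-squares representation of (a) is unavailable. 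My plan is to invoke Woronowicz's theorem, which asserts that every positive map $M_2(\comp) \to M_2(\comp)$ decomposes as a sum of a CP and a co-CP map, and then exploit the quadratic dependence of $H \mapsto M_H$: the CP/CP and co-CP/co-CP diagonal terms contribute non-negatively by (a), while the cross term is analysed through the partial transpose $(A_2^{(V)})^{T_2} = \tfrac{1}{2}(I - |\Omega\rangle\langle\Omega|)$ with $|\Omega\rangle = \sum_i e_i \otimes e_i$, whose unique negative eigenvalue must be absorbed using the pointwise constraint $H(vv^*) > 0$ for $v \neq 0$.

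For (d), surjectivity of $H$ on positive-definite operators makes $H$ an automorphism of the positive cone of $\enmr{V}$. The classical structure theorem for automorphisms of the symmetric cone of Hermitian positive-definite matrices (Koecher--Vinberg) then forces $H$ to be of the form $X \mapsto AXA^*$ or $X \mapsto AX^T A^*$ for some invertible $A \colon V \to W$. In either case $A^{\otimes r}$ sends $\omega_V$ to $(\det A)\,\omega_W$ under the fixed identification $V \simeq W$, and a direct computation yields $M_H = |\det A|^2 > 0$, which is strictly stronger than the claimed non-negativity.
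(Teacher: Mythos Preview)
Your treatment of (a), (b), and (d) is essentially sound and parallels the paper's argument, though expressed more directly. The paper translates condition (a) back to vector bundles via Propositions~\ref{prop_local_const} and~\ref{prop_dual_nak_bc_pos}: positive-definiteness of $H'$ or $H''$ is precisely (dual) Nakano positivity of the associated bundle, and the conclusion is then read off from Theorem~\ref{thm_pos} for the top Chern form, whose proof (Theorem~\ref{thm_id_c_tilde}) amounts to the identity $\tilde{c}_r = (\imun)^{r^2}\,\Psi^{-1}\circ (P^{F*})^{\otimes r}\circ\Psi$ --- the same antisymmetrizer trick you use, but applied to $P^{F*}$ rather than to a Kraus decomposition of $H$. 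For (b) and (d) the paper, like you, invokes Choi's characterization and Schneider's structure theorem (your ``Koecher--Vinberg'') to reduce to (a). One small repair: from ``the Kraus operators span $\operatorname{Hom}(V,W)$'' it does \emph{not} follow that some individual $A_k$ has full rank (the matrix units already give a counterexample). What you actually need is that the multilinear form $\widetilde{D}$ is nonzero on $\operatorname{Hom}(V,W)^{\times r}$, so by multilinearity it cannot vanish on every $r$-tuple drawn from a spanning set; alternatively, a unitary change of Kraus operators produces generic linear combinations, among which invertible ones exist.

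Part (c), however, has a genuine gap. St{\o}rmer's decomposition $H = H_1 + H_2$ into CP plus co-CP pieces is the right structural input, but $M_H$ is a \emph{degree-two} polynomial in $H$, and you have not controlled the cross term $\langle\omega_W \mid (H_1\otimes H_2 + H_2\otimes H_1)(\lvert\omega_V\rangle\langle\omega_V\rvert)\mid\omega_W\rangle$. Your remark that $(A_2^{(V)})^{T_2} = \tfrac12(I - \lvert\Omega\rangle\langle\Omega\rvert)$ has a single negative eigenvalue is correct, but ``absorbing'' it via the constraint $H(vv^*)>0$ is not an argument: that constraint is already used up in the St{\o}rmer decomposition, and there is no evident inequality linking the cross term to the diagonal ones. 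The paper avoids this difficulty entirely by observing that the case $\dim V=\dim W=2$ is, via Theorem~\ref{thm_id_rel_top_ch_ddiscr} and Proposition~\ref{prop_local_const}, equivalent to weak positivity of $c_2(E,h^E)$ for Griffiths-positive rank-$2$ bundles --- a statement Griffiths verified by an explicit calculation in \cite{GriffPosVect}. If you want an operator-theoretic proof of (c), you will need either to carry out that calculation directly for $M_H$ in the $2\times2$ case, or to find a genuine estimate for the cross term; the outline as written does not do either.
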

	\end{sloppypar}
	\par 
	Let's now put Theorems \ref{thm_pos}, \ref{thm_pos_gr} in the context of previous results.
	Griffiths in \cite{GriffPosVect} verified his own question for $c_2(E, h^E)$ by explicit evaluation.
	Bott-Chern in \cite[Lemma 5.3]{BottChern} gave an algebraic proof of the fact the top Chern class of a Bott-Chern non-negative vector bundle (see Remark \ref{rem_bc}a) for a definition) is \textit{non-negative}, and then P. Li in \cite{LiPingChen} extended methods of Bott-Chern for all Schur forms\footnote{Remark that P. Li's notation is inconsistent with the notaions of \cite{ReeseKnapp}, \cite{DemCompl} and the notations we are adopting in this article. Li's notion of strong non-negativity corresponds exactly to our notion of non-negativity.}.
	In Proposition \ref{prop_dual_nak_bc_pos}, we prove that Bott-Chern non-negativity is equivalent to dual Nakano non-negativity. 
	Hence Li's result is exactly Theorem \ref{thm_pos} for dual Nakano \textit{non-negative} metrics (our methods are different, see Appendix \ref{sect_str_pos} and Remark \ref{rem_concl}).
	Guler in \cite{GulerPos} verified the question of Griffiths for Serge forms.
	See Diverio \cite{DivKobLub} and Ross-Toma \cite{RossToam} for other related results.
	\par Now, let's describe an application of Theorem \ref{thm_pos}.
	Recall that very recently, Demailly, \cite{DemailluHYMGriff}, proposed an elliptic system of differential equations of Hermitian-Yang-Mills type for the curvature tensor of a vector bundle with an ample determinant. 
	This system of differential equations is designed so that the existence of a solution to it implies the existence of a dual Nakano positive Hermitian metric on the vector bundle.
	This has led Demailly to conjecture in \cite[Basic question 1.7]{DemailluHYMGriff} that the ampleness for a vector bundle over a compact manifold is equivalent to the existence of a dual Nakano positive metric.
	Theorem \ref{thm_pos} implies
	\par 
	\begin{cor}
		If Demailly's conjecture \cite[Basic question 1.7]{DemailluHYMGriff}, described above, is true, then for any ample vector bundle $E$ of rank $r$ over a compact manifold $X$, and any $k \in \nat$, $k \leq n$, $a \in \Lambda(k, r)$, the cohomological class of $P_a(c(E))$ contains a positive form.
	\end{cor}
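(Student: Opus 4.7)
The plan is to observe that this corollary is essentially an immediate consequence of Theorem \ref{thm_pos} once Demailly's conjecture is granted, so the proof amounts to stringing together three ingredients.

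First, I would invoke Demailly's conjecture \cite[Basic question 1.7]{DemailluHYMGriff} directly. By hypothesis, since $E$ is ample on the compact manifold $X$, the conjecture yields a Hermitian metric $h^E$ on $E$ such that $(E, h^E)$ is dual Nakano positive in the sense of Section \ref{sect_pos_conc}.

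Second, I would apply Theorem \ref{thm_pos} to the pair $(E, h^E)$: for every $k \in \nat$ with $k \leq n$ and every $a \in \Lambda(k, r)$, the $(k,k)$-form $P_a(c(E, h^E))$ is positive in the sense of \cite{ReeseKnapp}. Third, I would recall from Chern-Weil theory (as reviewed right after (\ref{eq_defn_chern})) that each $c_i(E, h^E)$ is $d$-closed and represents $c_i(E)$ in cohomology; since $P_a$ is a polynomial in the $c_i$'s, the form $P_a(c(E, h^E))$ is a closed representative of the cohomology class $P_a(c(E))$. This produces the required positive representative and concludes the argument.

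Since the corollary follows so directly from Theorem \ref{thm_pos}, there is no genuine obstacle in its proof; the entire difficulty is absorbed into the conjectural input and into Theorem \ref{thm_pos} itself. The only point deserving a brief comment is a sanity check that the positivity notion used in Theorem \ref{thm_pos} is precisely the notion of positive $(k,k)$-form relevant to the statement of the corollary, which is indeed the case by the conventions fixed after Remark \ref{rem_thm_pos}.
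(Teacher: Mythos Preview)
Your proposal is correct and matches the paper's approach exactly: the paper does not even write out a separate proof for this corollary, stating only that ``Theorem \ref{thm_pos} implies'' it, which is precisely the three-step argument you have spelled out (Demailly's conjecture gives a dual Nakano positive metric, Theorem \ref{thm_pos} then yields positivity of the Schur form, and Chern--Weil theory places that form in the right cohomology class).
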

	\par The last statement was proved (unconditionally on \cite[Basic question 1.7]{DemailluHYMGriff}) by Xiao in \cite{XiaoPos} for $k = n-1$ in a different way. The general case was conjectured in \cite[Conjecture 1.4]{XiaoPos}.
	\par 
	Now, in the second part of this paper we study a related inverse problem. 
	In other words, we investigate what can we say about positivity of a vector bundle from the positivity of the associated characteristic forms.
	The first result in this direction goes as follows.
	
	\begin{prop}\label{thm_rs_pos}
		A Hermitian vector bundle $(E, h^E)$ over a projective manifold $X$ is Griffiths non-negative if and only if for any embedded Riemann surface $Y$ in $X$ and for any quotient line bundle $Q$ of the restriction of $E|_Y$ to $Y$, the $(1, 1)$-form $c_1(Q, h^Q)$ is non-negative on $Y$ for the induced Hermitian metric $h^Q$ over $Q$.
	\end{prop}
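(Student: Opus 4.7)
\textbf{Necessity} is classical. For any embedded $Y \subset X$ and any holomorphic quotient $p : E|_Y \to Q$ with kernel $S$, let $i : Q \hookrightarrow E|_Y$ denote the smooth orthogonal splitting via $S^{\perp}$ and $A \in \Omega^{1,0}(Y, {\rm{Hom}}(S, Q))$ the second fundamental form of $S$. The standard identity $R^Q = p \circ R^{E|_Y} \circ i + A \wedge A^*$ gives, at each $y \in Y$ and each unit $q \in Q_y$,
\begin{equation*}
\imun \scal{R^Q_y q}{q}_{h^Q} = \imun \scal{R^{E|_Y}_y iq}{iq}_{h^E} + \imun \scal{(A \wedge A^*) q}{q};
\end{equation*}
both summands are non-negative $(1,1)$-forms (the first by Griffiths non-negativity of $E|_Y$, inherited from $E$; the second unconditionally), so $c_1(Q, h^Q) \geq 0$.

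\textbf{Sufficiency: the plan.} I argue by contraposition. Given $x \in X$, $v \in T_xX \setminus \{0\}$ and $s \in E_x \setminus \{0\}$, I will produce an embedded Riemann surface $Y \subset X$ through $x$ with $T_xY = \comp v$ and a global quotient line bundle $Q$ of $E|_Y$ such that
\begin{equation*}
c_1(Q, h^Q)(x)\big|_{T_xY} = \frac{\imun}{2\pi} \frac{\scal{R^E_x s}{s}_{h^E}}{|s|_{h^E}^2}\bigg|_{T_xY}.
\end{equation*}
Combined with the hypothesis applied to this $(Y, Q)$, this yields $\imun \scal{R^E_{v\bar v} s}{s}_{h^E} \geq 0$, which is the missing case of Griffiths non-negativity.

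\textbf{Construction.} A curve $Y$ as required exists by a standard consequence of projectivity of $X$: blow up $X$ at $x$, pick the point $[v]$ in the exceptional divisor $\mathbb{P}(T_xX)$, and apply a Bertini-type argument to produce a smooth complete-intersection curve through $[v]$ transverse to the exceptional divisor; its image in $X$ is $Y$. Pick a holomorphic frame $\tilde e_1, \ldots, \tilde e_r$ of $E|_Y$ on a neighborhood $U$ of $x$ that is Chern-normal at $x$: $\tilde e_1(x) = s/|s|_{h^E}$, the $\tilde e_i(x)$'s are orthonormal in $E_x$, and $h_{ij} := h^E(\tilde e_i, \tilde e_j)$ satisfies $h_{ij}(x) = \delta_{ij}$ and $\partial h_{ij}(x) = 0$. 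Such a frame is produced by multiplying any holomorphic frame by a suitable constant-plus-linear matrix of holomorphic functions. Finally, extend the local holomorphic section $\tilde e_1^* \in H^0(U, E^*|_Y)$ to a global section of $E^*|_Y \otimes \mathcal{O}_Y(Nx)$ for $N$ sufficiently large (Serre vanishing on the projective curve $Y$), let $L \subset E^*|_Y$ be the saturation of the subsheaf generated by this section, and set $Q := L^*$. Then $Q$ is a global quotient line bundle of $E|_Y$ whose germ at $x$ matches the one of $E|_Y / {\rm{span}}(\tilde e_2, \ldots, \tilde e_r)$.

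\textbf{Key computation and main obstacle.} The class $[\tilde e_1]$ is a local holomorphic frame of $Q$ near $x$ with
\begin{equation*}
|[\tilde e_1](z)|_{h^Q}^2 = h_{11}(z) - |\pi_{S_z}(\tilde e_1(z))|_{h^E}^2,
\end{equation*}
and Chern-normality forces $h_{j1}(z) = O(|z|^2)$ for $j \geq 2$, making the correction term $O(|z|^4)$. Applying $-\partial \bar\partial \log(\cdot)$ at $x$ therefore collapses to $-\partial \bar\partial h_{11}(x)$, which in the normal frame is precisely the entry $R^E_{11}(x) = \scal{R^E_x s}{s}_{h^E}/|s|^2$, giving the required identity after evaluation on $T_xY$. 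The main delicate point in the argument is the globalization step: the hypothesis requires $Q$ to be a quotient defined on all of $Y$, and one cannot conclude from purely local data on a neighborhood of $x$; it is here that projectivity of $X$ (and hence of $Y$, giving ampleness of $\mathcal{O}_Y(x)$ and Serre vanishing) is essential.
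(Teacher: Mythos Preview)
Your proof is correct and follows essentially the same strategy as the paper: both globalize a local normal frame at $x$ (using projectivity) to manufacture a quotient line bundle of $E|_Y$ whose first Chern form at $x$ recovers $\frac{\imun}{2\pi}\langle R^E_x s, s\rangle_{h^E}/|s|^2$, and then let $(x,v,s)$ vary. The paper reaches the curve $Y$ by a density-and-limit argument (Bertini gives smooth curves through a dense set of tangent directions) and builds an entire flag in $E|_Y$ from meromorphic sections matching the normal frame to third order, whereas you construct the curve through the prescribed direction directly via the blowup and produce only the single line in $E^*|_Y$ that is actually needed; both differences are streamlinings rather than a change of method.
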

	\begin{rem}
		An example of Fulton \cite[Proposition 5]{FultNumCrit} shows that to deduce ampleness of $E$, it is not enough to check the positivity of the first Chern class $c_1(Q)$ of $Q$ in $H^{2}(Y)$.
	\end{rem}
	\par For ample vector bundles over Riemann surfaces, the analogue of Proposition \ref{thm_rs_pos} was proved by Hartshorne, \cite[Theorem 2.4]{HartshCurves}.
	Hence Proposition \ref{thm_rs_pos} is a differential-geometric generalization of \cite[Theorem 2.4]{HartshCurves} to higher dimensions (for non-strict notion of positivity).
	\par 
	From Proposition \ref{thm_rs_pos}, it is very natural to ask if one can have a classification of Griffiths positive vector bundles without taking restrictions over embedded Riemann surfaces. 
	In dimension $1$, a non-negative version of this question follows from Proposition \ref{thm_rs_pos} (and a positive version follows from the proof of Proposition \ref{thm_rs_pos}).
	Our final result shows that already in dimension $2$, such a classification does not exist.
	To explain our result better, recall that Kleiman in \cite{Kleiman} proved that over any complex surface, the cone of numerically positive polynomials $P(c(E))$ in Chern classes is generated by $c_1(E)$, $c_2(E)$ and $c_1^2(E) - c_2(E)$.
	\begin{thm}\label{thm_dim2}
		Over $\mathbb{P}^2$, there is a non-ample holomorphic vector bundle $E$ of rank $2$, without nontrivial quotients, such that one can endow $E$ with a Hermitian metric $h^E$ so that the forms $c_1(E, h^E), c_2(E, h^E), c_1(E, h^E)^{\wedge 2} - c_2(E, h^E)$ are positive. 
	\end{thm}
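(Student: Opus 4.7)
My plan is to construct the example explicitly, in two stages: an algebraic construction of the bundle $E$ with the required properties, followed by the construction of a Hermitian metric on $E$ with the desired positivity.

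For the bundle, I would use Serre's construction. Take integers $a<0<b$ with $a+b>0$, and a $0$-dimensional subscheme $Z\subset\mathbb{P}^2$ of length $\ell$ in sufficiently general position, chosen so that $ab+\ell>0$ and $a^2+ab+b^2-\ell>0$ (and $\ell$ avoids the degenerate value that would force $E$ to split). A concrete workable choice is $(a,b,\ell)=(-1,3,5)$. Since $h^0(\mathcal{O}_{\mathbb{P}^2}(a+b-3))=0$ in this range, the Cayley--Bacharach condition is vacuous, and a suitable extension class in $\mathrm{Ext}^1(I_Z(b),\mathcal{O}(a))$ yields a rank-$2$ locally free sheaf $E$ fitting in
$$0\to\mathcal{O}_{\mathbb{P}^2}(a)\to E\to I_Z(b)\to 0.$$
The Chern classes are $c_1(E)=(a+b)H$, $c_2(E)=(ab+\ell)H^2$, so $c_1^2-c_2=(a^2+ab+b^2-\ell)H^2$ and all three classes are positive multiples of $H$ or $H^2$. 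Non-ampleness follows from Grauert--M\"ulich: on the non-empty locus of jumping lines in the dual plane, the splitting type of $E|_L$ has a summand of degree $\leq 0$, precluding ampleness of $E$. For $Z$ in sufficiently general position, $\mu$-stability of $E$ follows from Hoppe's criterion ($H^0(E(-k))=0$ for $k\geq 2$ is a direct calculation from the defining sequence); and the non-existence of any line bundle quotient is verified by combining stability (which forbids $\mathcal{O}(k)$ quotients for $k\leq\mu(E)$) with a cohomological argument (for $k>\mu(E)$, any non-zero map $E\to\mathcal{O}(k)$ has zero-locus of length $c_2(E^*(k))>0$ because $E$ is non-split, hence is not surjective).

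The main obstacle is the construction of the metric $h^E$. I would equip $E$ with a Hermitian metric built from Fubini--Study metrics on $\mathcal{O}(a)$ and on the smooth line bundle underlying $\mathcal{O}(b)$, assembled via a $C^\infty$ orthogonal splitting of the extension. In an adapted smooth unitary frame, the Chern curvature $R^E$ is a $2\times 2$ block matrix whose diagonal entries differ from $R^{\mathcal{O}(a)}$ and $R^{\mathcal{O}(b)}$ by corrections quadratic in the second fundamental form $\beta$, and whose off-diagonal entries involve the covariant derivatives of $\beta$. The trace satisfies $c_1(E,h^E)=\mathrm{tr}(R^E)=R^{\mathcal{O}(a)}+R^{\mathcal{O}(b)}=(a+b)\omega_{FS}$, which is automatically pointwise positive. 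The difficulty lies in the pointwise positivity of $c_2=\det(R^E)$ and $c_1^{\wedge 2}-c_2$: since $ab<0$, the leading diagonal contribution $R^{\mathcal{O}(a)}\wedge R^{\mathcal{O}(b)}$ to $c_2$ is a negative multiple of $\omega_{FS}^{\wedge 2}$, so positivity must come from the off-diagonal terms together with the $\beta$-corrections. An additional subtlety is that $\mathrm{Hom}(\mathcal{O}(a),\mathcal{O}(b))\otimes\Omega^{1,0}_{\mathbb{P}^2}$ has non-zero second Chern class, so any smooth $(1,0)$-form $\beta$ must vanish at a non-empty finite set of points; at these points the diagonal $\beta$-corrections vanish and positivity must be recovered solely from the covariant-derivative off-diagonal terms.

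The verification then reduces to an explicit pointwise quadratic-form analysis: at every $x\in\mathbb{P}^2$, the polynomial expressions for $c_2$ and $c_1^{\wedge 2}-c_2$ in the entries of $R^E_x$ must evaluate to positive $(2,2)$-forms. By suitably choosing the extension class, the subscheme $Z$, and the auxiliary parameters of the metric within an open set, and by exploiting the strictness of the numerical Chern-class inequalities to provide slack, one can produce a metric for which these pointwise inequalities hold everywhere. A continuity-and-perturbation argument within the open set of admissible metrics then completes the construction.
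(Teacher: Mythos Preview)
Your algebraic construction of $E$ via Serre's method is a reasonable alternative to the paper's use of Fulton's explicit examples, and with some care the non-ampleness, stability, and absence of line-bundle quotients can be checked along the lines you sketch. One wrinkle you have not addressed: in the extension $0\to\mathcal{O}(a)\to E\to I_Z(b)\to 0$ the inclusion $\mathcal{O}(a)\hookrightarrow E$ vanishes along $Z$ (locally it is the Koszul map $1\mapsto(-y,x)$), so $\mathcal{O}(a)$ is not a subbundle and your ``$C^\infty$ orthogonal splitting of the extension'' does not exist as stated.

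The decisive gap, however, is the metric. You correctly diagnose the obstruction---the diagonal contribution $ab\,\omega_{FS}^{\wedge 2}$ to $c_2$ is negative, and the compensating second fundamental form must vanish somewhere---but your resolution, ``by suitably choosing \dots\ a continuity-and-perturbation argument then completes the construction,'' is a hope, not a proof. Nothing you wrote forces the three pointwise inequalities to hold simultaneously at every point, least of all at the zeros of $\beta$ and the points of $Z$. The paper (following Pingali) takes an entirely different and far more robust route: stability yields a Hermite--Einstein metric $h^E_0$, for which the Kobayashi--L\"ubke inequality gives $c_1(E,h^E_0)^{\wedge 2}-4\,c_2(E,h^E_0)\le 0$ pointwise; a conformal change $h^E=h^E_0e^{-\phi}$ then satisfies
\[
c_1(E,h^E)^{\wedge 2}-c_2(E,h^E)=\tfrac14\bigl(c_1(E,h^E_0)^{\wedge 2}-4\,c_2(E,h^E_0)\bigr)+\tfrac34\,c_1(E,h^E)^{\wedge 2},
\]
and Yau's theorem solves the resulting Monge--Amp\`ere equation for $\phi$ so that $c_1(E,h^E)>0$ and $c_1(E,h^E)^{\wedge 2}-c_2(E,h^E)$ equals a prescribed positive representative $\eta$ of its class. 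Positivity of $c_2(E,h^E)$ then drops out of the same identity together with the Kobayashi--L\"ubke sign. This combination of Hermite--Einstein existence and Yau's theorem is precisely the missing idea in your proposal.
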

	\begin{rem}
		As $E$ is not ample, the metric $h^E$ is not Griffiths positive.
	\end{rem}
	\par 
	Let us explain our motivations for studying the last questions.
	Recall that Nakai-Moishezon criteria says that ample line bundles have numerical characterization, i.e. a characterization which depends only on the intersection form of the cohomology ring, the Hodge structure and the homology classes of analytic cycles.
	Fulton in \cite{FultNumCrit} showed that already in dimension $2$, such numerical criteria does not exist for ample vector bundles.
	Theorem \ref{thm_dim2} shows that in the same setting one cannot hope for a pointwise positivity criteria based on characteristic classes.
	\par This article is organized as follows. 
	In Section 2, we recall several concepts of positivity for vector bundles and interpret them using operator theory.
	In Section 3, we prove Theorems \ref{thm_pos}, \ref{thm_pos_gr} and Proposition \ref{prop_open_prob}. We also prove a local version of the Jacobi-Trudi identity and Kempf-Laksov determinantal identity.
	In Section 4, we establish Theorem \ref{thm_dim2} and  Proposition \ref{thm_rs_pos}.
	Finally, in Appendix \ref{sect_str_pos} we give an algebraic proof of a non-negative version of Theorem \ref{thm_pos}.
	\par 
	Shortly after submitting this paper, the author was informed by Simone Diverio that in his joint work \cite{DivFag} with Filippo Fagioli, they obtained positivity of certain positive linear combinations of Schur forms for Griffiths positive vector bundles. Their proof is based on a generalisation of Theorem \ref{thm_jac_trud_local}, which Diverio and Fagioli have obtained independently by similar methods.
	\par {\bf{Acknowledgements.}} Author would like to express his deepest gratitude for Jean-Pierre Demailly for numerous fruitful discussions.
	He also thanks the colleagues and staff from Institute Fourier, University Grenoble Alps for their hospitality.
	This work is supported by the European Research Council grant ALKAGE number 670846 managed by Jean-Pierre Demailly.
	\par {\bf{Notation.}}
		For a Hermitian vector bundle $(F, h^F)$, we denote by $R^F := (\nabla^F)^2$ the curvature of the associated Chern connection  $\nabla^F$.

	\section{Positivity concepts for vector bundles}\label{sect_pos_conc}
	For line bundles, the notions of ampleness and positivity coincide on compact manifolds by Kodaira embedding theorem.
	For vector bundles of higher rank, there are several useful notions of positivity: Griffiths positivity, Nakano positivity, dual Nakano positivity and others.
	\par We say, following Hartshorne, \cite[\S 2]{HartsAmple}, that a vector bundle $E$ over a complex manifold $X$ is \textit{ample} if for every coherent sheaf $\mathscr{F}$, there is an integer $n_0 > 0$, such that for every $n > n_0$, the sheaf $\mathscr{F} \otimes S^n E$ is generated as an $\mathscr{O}_X$-module by its global sections.
	According to \cite[Proposition 3.2]{HartsAmple}, ampleness of $E$ is equivalent to the ampleness of the line bundle $\mathcal{O}_{\mathbb{P}(E^*)}(1)$ over the projectivization $\mathbb{P}(E^*)$ of the dual vector bundle $E^*$.
	\par 
	As we described in Introduction, the precise relation between ampleness and the above notions of positivity is still only conjectural in higher rank, except for the case when $X$ is a compact Riemann surface, where all the positivity notions coincide and equivalent to ampleness by the result of Umemura \cite[Theorem 2.6]{Umemura}, cf. also Campana-Flenner \cite{CampFlenn}.
	\par 
	The main goal of this section is to review those notions of positivity for Hermitian vector bundles and to reformulate them using operator theory.
	\par 
	This sections is organized as follows. In Section 2.1 we recall the classical notions of positivity for vector bundles.
	In Section 2.2 we recall basic facts from operator theory and then provide a reformulation of the above notions of positivity for vector bundles.
	Finally, in Section 2.3, motivated by the theory of positive operators, we introduce a new notion of positivity for vector bundles and study some of its properties.
	
	\subsection{Basic notions of positivity for vector bundles and their properties}\label{subsect_pos_cond}
	We fix a Hermitian vector bundle  $(E, h^E)$ on a manifold $X$ and use the following notation $r := \rk{E}$, $n := \dim X$. Fix also some holomorphic coordinates $(z_1, \ldots, z_n)$ on $X$ and an orthonormal frame $e_1, \ldots, e_r$ of $E$. We decompose the curvature of the Chern connection on $(E, h^E)$ as follows
	\begin{equation}\label{eq_curv_dec}
		\frac{\imun R^E}{2 \pi} = \sum_{1 \leq j, k \leq n} \sum_{1 \leq \lambda, \mu \leq r} c_{jk \lambda \mu}  \imun dz_j \wedge d \overline{z}_k \otimes e_{\lambda}^* \otimes e_{\mu}.
	\end{equation}
	Note that the coefficients $c_{jk \lambda \mu}$ satisfy the following symmetry relation
	\begin{equation}\label{eq_c_cff_symm}
		c_{jk \lambda \mu} = \overline{c_{k j  \mu \lambda}}.
	\end{equation}
	\par We say that $(E, h^E)$ is \textit{Griffiths positive} if the associated quadratic form
	\begin{equation}\label{eq_gr_pos_cond}
		 \frac{1}{2 \pi} \langle R^E(v, \overline{v}) \xi, \xi \rangle_{h^E} =  \sum_{1 \leq j, k \leq n} \sum_{1 \leq \lambda, \mu \leq r} c_{jk \lambda \mu}  \xi_{\lambda}  \overline{\xi}_{\mu} v_j \overline{v}_k
	\end{equation}
	takes positive values on non-zero tensors $v \otimes \xi = \sum v_{j} \xi_{\lambda} \frac{\partial}{\partial z_j} \otimes e_{\lambda} \in T^{1, 0}_{x}X \otimes E_x$.
	By studying the relation between Griffiths positivity of $(E, h^E)$ and the positivity of the tautological line bundle on the projectivization of $E^*$, Griffiths in \cite{GriffPosVect} proved that Griffiths positivity implies ampleness.
	\par
	Let's now construct the linear operator $P^E_x : T_x^{1,0}X \otimes E_x \to T_x^{1,0}X \otimes E_x$ as follows
	\begin{equation}\label{eq_pe_oper}
		P^E_x(\tau) = \sum_{1 \leq j, k \leq n} \sum_{1 \leq \lambda, \mu \leq r} c_{jk \lambda \mu} \tau_{j \lambda} \frac{\partial}{\partial z_k} \otimes e_{\mu},
	\end{equation}
	where $\tau = \sum \tau_{j \lambda} \frac{\partial}{\partial z_j} \otimes e_{\lambda}$.
	We endow $T_x^{1,0}X$ with the Hermitian metric $h^{TX}$ making the basis $\frac{\partial}{\partial z_i}$, $i = 1, \ldots, n$, orthonormal. 
	From now on, we use this Hermitian metric implicitly.
	An easy verification shows that condition (\ref{eq_c_cff_symm}) ensures that $P^E_x$ is self-adjoint.
	\textit{Nakano positivity}, see \cite{NakanoPos}, demands positive definiteness of $P^E_x$.
	For $v_1, v_2 \in T_x^{1, 0} X$ and $\xi_1, \xi_2 \in E_x$, we get
	\begin{equation}\label{eq_pe_curv_rel}
		\langle P^{E}_x(v_1 \otimes \xi_1), v_2 \otimes \xi_2 \rangle_{h^{TX} \otimes h^E} 
		= 
		\frac{1}{2 \pi} \langle R^E(v_1, \overline{v_2}) \xi_1, \xi_2 \rangle_{h^E}.
	\end{equation}
	\par 
	\begin{sloppypar}
	\textit{Dual Nakano positivity} stipulates that the linear operator $P^{E*}_x : T_x^{1,0}X \otimes E_x^* \to T_x^{1,0}X \otimes E_x^*$, associated to $P^{E*}_x$ by the natural isomorphism $\enmr{T_x^{1,0}X \otimes E_x} \to \enmr{T_x^{1,0}X \otimes E_x^*}$ induced by transposition $\enmr{E_x} \to \enmr{E_x^*}$, is positive definite (it is self-adjoint by the same reason as $P^{E}_x$). In local coordinates the operator takes form
	\begin{equation}\label{eq_dualnak_pos}
		P^{E*}_x(\tau') := \sum_{1 \leq j, k \leq n} \sum_{1 \leq \lambda, \mu \leq r} c_{jk \mu \lambda} \tau'_{j \lambda} \frac{\partial}{\partial z_k} \otimes e_{\mu}^*,
	\end{equation}
	where  $\tau' = \sum \tau'_{j \lambda} \frac{\partial}{\partial z_j} \otimes e_{\lambda}^*$ in $T^{1, 0}_xX \otimes E_x^*$ (notice the transposition of $\mu$ and $\lambda$ in  $c_{jk \lambda \mu}$ in comparison with (\ref{eq_pe_oper})).
	In the notations of (\ref{eq_pe_curv_rel}), we then have
	\begin{equation}\label{eq_pe_star_pe}
		\langle P^{E*}_x(v_1 \otimes \xi_1^{*}), v_2 \otimes \xi_2^{*} \rangle_{h^{TX} \otimes h^{E^*}} 
		= 
		\langle P^{E}_x(v_1 \otimes \xi_2), v_2 \otimes \xi_1 \rangle_{h^{TX} \otimes h^{E}}
	\end{equation}
	\par 
	Griffiths positivity is weaker than (dual) Nakano positivity, cf. Proposition \ref{prop_self_dual}. 
	To interpolate between those notions, let's recall two more positivity notions.
	\par 
	For $1 \leq k \leq \min(n, r)$, a vector bundle $(E, h^E)$ is called \textit{$k$-Nakano positive} (resp. \textit{$k$-dual Nakano positive}) if $\langle P^E_x(\tau), \tau \rangle_{h^{TX} \otimes h^E}$ (resp.  $\langle P^E_x(\tau'), \tau' \rangle_{h^{TX} \otimes h^{E^*}}$) is positive for all tensors $\tau$ (resp. $\tau'$) of rank $\leq$ $k$, cf. \cite[Definition VII.6.5]{DemCompl}.  
	When $k = 1$, $k$-(dual) Nakano positivity coincides with Griffiths positivity and when $k = \min(n, r)$, it coincides with (dual) Nakano positivity.
	\par 
	We note that for all of the above notions of positivity, we will freely use the corresponding negative, non-negative and non-positive versions, defined in a natural way.
	\begin{prop}\label{prop_self_dual}
		A vector bundle $(E, h^E)$ is Griffiths positive if and only if $(E^*, h^{E^*})$ is Griffiths negative.
		Also $(E, h^E)$ is $k$-\textit{dual} Nakano positive if and only if $(E^*, h^{E^*})$ is $k$-Nakano negative. Finally, $(E, h^E)$ is $k$-Nakano positive if and only if $(E^*, h^{E^*})$ is $k$-\textit{dual} Nakano negative. The same holds for non-strict versions of positivity.
	\end{prop}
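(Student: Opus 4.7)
The plan is to derive all three equivalences from one local identity combined with the symmetry (\ref{eq_c_cff_symm}). First I would recall that the Chern connection on $E^*$ is induced from the one on $E$, so that $R^{E^*} = -(R^E)^T$, where transposition acts on the endomorphism part. Expressing the curvature of $(E^*, h^{E^*})$ in the dual orthonormal frame $\{e_\lambda^*\}$, the expansion coefficients $c'_{jk\lambda\mu}$ analogous to those of (\ref{eq_curv_dec}) satisfy
\[
  c'_{jk\lambda\mu} = -c_{jk\mu\lambda}.
\]

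For the Griffiths equivalence, I would substitute this identity into the quadratic form (\ref{eq_gr_pos_cond}) written for $E^*$ and applied to $v \otimes \eta \in T_x^{1,0}X \otimes E_x^*$. A routine relabeling $\lambda \leftrightarrow \mu$ of the summation indices then produces the identity
\[
  \tfrac{1}{2\pi}\langle R^{E^*}(v, \overline{v})\eta, \eta \rangle_{h^{E^*}} = -\tfrac{1}{2\pi}\langle R^{E}(v, \overline{v})\xi, \xi \rangle_{h^{E}},
\]
where $\xi \in E_x$ is the element whose components in $\{e_\lambda\}$ are the complex conjugates of those of $\eta$ in $\{e_\lambda^*\}$. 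Since $\eta \mapsto \xi$ is a nonzero-preserving bijection, strict (resp. non-strict) Griffiths negativity of $(E^*, h^{E^*})$ is equivalent to strict (resp. non-strict) Griffiths positivity of $(E, h^E)$.

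For the Nakano equivalences, substituting $c'_{jk\lambda\mu} = -c_{jk\mu\lambda}$ into the formula (\ref{eq_pe_oper}) for the curvature operator of $(E^*, h^{E^*})$ and comparing with the defining formula (\ref{eq_dualnak_pos}) gives the operator identity
\[
  P^{E^*}_x = -P^{E*}_x
\]
on $T_x^{1,0}X \otimes E_x^*$. Since the rank of a tensor $\tau' \in T_x^{1,0}X \otimes E_x^*$ does not depend on which positivity condition one is testing, this immediately yields the equivalence between $k$-dual Nakano positivity of $(E, h^E)$ and $k$-Nakano negativity of $(E^*, h^{E^*})$. Applying the same equivalence to $(E^*, h^{E^*})$ in place of $(E, h^E)$, together with the canonical isometry $E^{**} \simeq E$, gives the remaining equivalence between $k$-Nakano positivity of $(E, h^E)$ and $k$-dual Nakano negativity of $(E^*, h^{E^*})$.

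The only real hazard in the argument is bookkeeping: one must keep careful track of which frame a tensor is expanded in, and where transposition, complex conjugation, or the metric identification is being applied. No substantive analytic difficulty is expected, and the non-strict versions require no additional ideas.
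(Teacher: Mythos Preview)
Your proposal is correct and follows essentially the same route as the paper: both arguments rest on the identity $R^{E^*} = -(R^E)^T$ and the resulting sign flip in the curvature coefficients, with the paper packaging this as the identity $\langle R^E(v_1,\overline{v}_2)\xi_1,\xi_2\rangle_{h^E} = -\langle R^{E^*}(v_1,\overline{v}_2)\xi_2^*,\xi_1^*\rangle_{h^{E^*}}$ and leaving the Nakano cases to the reader, while you spell out the coefficient relation $c'_{jk\lambda\mu} = -c_{jk\mu\lambda}$ and the operator identity $P^{E^*}_x = -P^{E*}_x$ explicitly.
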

	\begin{proof}
		For Griffiths positivity, it follows directly from $R^E = - (R^{E^*})^T$, cf. \cite[Proposition VII.6.6]{DemCompl}, where $T$ means a transposition $\enmr{E} \to \enmr{E^*}$, which in notation of (\ref{eq_gr_pos_cond}) gives
		\begin{equation}\label{eq_re_redual_ident}
			\langle R^E(v_1, \overline{v}_2) \xi_1, \xi_2 \rangle_{h^E}
			=
			-
			\langle R^E(v_1, \overline{v}_2) \xi_2^*, \xi_1^* \rangle_{h^{E^*}}.
		\end{equation}
		For other positivity notions the proof is also based on the identity (\ref{eq_re_redual_ident}) and is left to the reader.
	\end{proof}
	\par 
	\begin{prop}\label{prop_quot_dual_nak_pos}
			For any $k \in \nat^*$, a quotient of a $k$-dual Nakano (resp. Griffiths) positive vector bundle is $k$-dual Nakano (resp. Griffiths) positive. The same goes for non-negative version.
		\end{prop}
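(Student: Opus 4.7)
The plan is to exploit the duality from Proposition \ref{prop_self_dual} to convert the statement about quotients of (dual Nakano / Griffiths) positive bundles into an equivalent statement about subbundles of (Nakano / Griffiths) negative bundles, which can then be handled by the classical Gauss--Codazzi second fundamental form formula.

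In detail, given a short exact sequence $0 \to S \to E \to Q \to 0$ of holomorphic vector bundles, with $h^S, h^Q$ the metrics induced by $h^E$, I would first dualize it to obtain the short exact sequence $0 \to Q^* \to E^* \to S^* \to 0$, where $Q^*$ is now a holomorphic \emph{subbundle} of $E^*$ endowed with the restricted metric $h^{E^*}|_{Q^*} = h^{Q^*}$. By Proposition \ref{prop_self_dual}, the $k$-dual Nakano (resp.~Griffiths) positivity of $(E, h^E)$ translates into $k$-Nakano (resp.~Griffiths) negativity of $(E^*, h^{E^*})$, and the desired conclusion on $(Q, h^Q)$ translates into $k$-Nakano (resp.~Griffiths) negativity of $(Q^*, h^{Q^*})$. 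Thus the proposition reduces to the statement that a holomorphic subbundle of a $k$-Nakano (resp.~Griffiths) negative vector bundle is itself $k$-Nakano (resp.~Griffiths) negative, in both the strict and non-strict versions.

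The latter is essentially Gauss--Codazzi. Denoting by $\beta \in C^{\infty}(X, \Lambda^{1,0}T^*X \otimes \mathrm{Hom}(Q^*, S^*))$ the second fundamental form of $Q^* \subset E^*$, one has the pointwise identity
\begin{equation}
	\big\langle P^{Q^*}_x(\tau), \tau \big\rangle_{h^{TX} \otimes h^{Q^*}}
	=
	\big\langle P^{E^*}_x(\iota \tau), \iota \tau \big\rangle_{h^{TX} \otimes h^{E^*}}
	-
	\tfrac{1}{2\pi}\big| \beta(\tau) \big|^2_{h^{S^*}},
\end{equation}
for any $\tau \in T^{1,0}_xX \otimes Q^*_x$, where $\iota : Q^* \hookrightarrow E^*$ is the inclusion and $\beta(\tau) \in S^*_x$ is the contraction of $\beta$ with $\tau$. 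Because $\iota$ is a pointwise linear injection, the rank of $\iota \tau \in T^{1,0}_xX \otimes E^*_x$ equals that of $\tau$, so rank-$\leq k$ tensors stay rank-$\leq k$ under $\iota$. Hence, if the first term on the right is $\leq 0$ (resp.~$<0$) for every rank-$\leq k$ tensor, subtracting the non-negative quantity $|\beta(\tau)|^2/(2\pi)$ preserves non-positivity (resp.~strict negativity), which is exactly what we need.

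The only point requiring a little care is verifying the sign and form of the Gauss--Codazzi identity above; this is a direct, well-documented computation (cf.~\cite[Chapter~V, \S 14]{DemCompl}) and is in fact where I expect almost all the technical weight of the proof to sit. Specializing to $k = 1$ handles the Griffiths case, either directly or by noting that $k$-dual Nakano positivity collapses to Griffiths positivity; the same argument with $\leq$ in place of $<$ covers the non-negative versions.
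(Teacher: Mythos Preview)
Your proposal is correct and follows essentially the same route as the paper's proof: both dualize via Proposition~\ref{prop_self_dual} to reduce to the statement that a subbundle of a ($k$-)Nakano negative bundle is ($k$-)Nakano negative, and then invoke the Gauss--Codazzi curvature formula for subbundles (the paper simply cites \cite[Proposition~VII.6.10]{DemCompl} for this, while you spell out the second-fundamental-form identity and the rank-preservation argument explicitly).
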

		\begin{rem}\label{rem_quot_not_nec}
				For Nakano positive vector bundles, the quotients are not necessarily Nakano positive, cf. \cite[Example 6.8, end of \S VII.6]{DemCompl}.
		\end{rem}
		\begin{proof}
			For Griffiths positive vector bundles, it is proved in \cite[Proposition VII.6.10]{DemCompl}.
			For dual Nakano positive vector bundles, from Proposition \ref{prop_self_dual}, we see that establishing Proposition \ref{prop_quot_dual_nak_pos} is equivalent to proving that a subbundle of a Nakano negative vector bundle is Nakano negative. The last statement is proved in \cite[Proposition VII.6.10]{DemCompl} using the curvature formula for subbundles.
			For the related $k$-notions of positivity the proof is the same.
		\end{proof}
	\end{sloppypar}

	\subsection{Positivity for vector bundles through theory of positive operators}\label{sect_pos_oper}
		The main goal of this section is to reformulate all the above notions of positivity for vector bundles in terms of positive operators.
		\par 
		Let us first recall the relevant notions from operator theory.
		For this, we fix two complex vector spaces $V, W$ of dimensions $n$ and $r$ with Hermitian products $h^V$, $h^W$.
		We also fix an operator
		\begin{equation}\label{eq_not_h_oper}
			H : \enmr{V} \to \enmr{W}.
		\end{equation}
		\par 
		Recall that we defined \textit{non-negativity} and \textit{positivity} of $H$ in Introduction.
		Let's recall some stronger positivity assumptions.
		We will call it \textit{$k$-non-negative} (resp. \textit{$k$-positive}), $k \in \nat^*$, if $H^{\oplus k} : \enmr{V^{\oplus k}} \to \enmr{W^{\oplus k}}$ is non-negative (resp. positive).
		We call it \textit{completely non-negative} (resp. \textit{completely positive}) if $H$ is $k$-non-negative (resp. \textit{$k$-positive}) for any $k \in \nat^*$.
		\par 
		Remark that our notation is not compatible with the classical one, cf. \cite{StormerPositiveLin}, where non-negative operators are called positive. We, however, stick to our notation because here non-negativity always means a closed condition in a natural topology, and positivity is an open condition.
		\par 
		We also note that an operator is self-adjoint if and only if it can be written as a difference of two positive definite operators. Hence, any non-negative operator $H$ preserves the class of self-adjoint operators. 
		As $H$ is also $\comp$-linear, it also preserves the class of anti-self-adjoint operators.
		In short, for any $A \in \enmr{V}$ and any non-negative $H$, we have established
		\begin{equation}\label{eq_h_sa_presers}
			H(A^*) = H(A)^*.
		\end{equation}
		\par 
		We now fix an orthonormal basis $e_1, \ldots, e_n$ of $V$ and construct an operator $\tilde{P} \in \enmr{V \otimes W}$, associated to $H$, as follows
		\begin{equation}\label{eq_tild_p_h_rel}
			\tilde{P} = \sum_{i, j = 1}^{n} \big( e_i \otimes e_j^{*} \big) \otimes H(e_i \otimes e_j^{*}),
		\end{equation}
		where we implicitly identified $\enmr{V} \otimes \enmr{W}$ with $\enmr{V \otimes W}$ by the natural isomorphism $(f \otimes g)(v \otimes w) = f(v) \otimes g(w)$, where $f \in \enmr{V}$, $g \in \enmr{W}$ and $v \in V$, $w \in W$.
		The operator $\tilde{P}$ depends on the choice of the orthonormal basis, however its positivity properties clearly don't.
		For any $w_1, w_2 \in W$, it satisfies the following defining relation
		\begin{equation}\label{eq_ptild_def_rel}
			\scal{H(e_i \otimes e_j^*)w_1}{w_2} = \scal{\tilde{P}(e_j \otimes w_1)}{e_i \otimes w_2}.
		\end{equation}
		\par Let's recall below some important results from the theory of positive operators.
		\begin{thm}\label{thm_choi_jag}
			The operator $H$ is $k$-non-negative (resp. positive) if and only if $\langle \tilde{P}(\tau), \tau \rangle_{h^V \otimes h^W}$ is non-negative (resp. positive) for all tensors $\tau \in V \otimes W$ of rank $k$.
		\end{thm}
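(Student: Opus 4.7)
My plan is to exploit the Choi--Jamio\l kowski-type correspondence encoded by the defining relation (\ref{eq_ptild_def_rel}), which translates the $k$-positivity of $H$ into a pointwise positivity statement for $\tilde{P}$ on tensors of bounded rank.

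First, I would use the spectral theorem to reduce to rank-one positive-semidefinite operators: every non-zero PSD element of $\enmr{V^{\oplus k}}$ is a non-negative sum of extreme rays $v \otimes v^{*}$ with $v = (v_1, \dots, v_k) \in V^{\oplus k}$. Viewing $v \otimes v^{*}$ as a $k \times k$ block matrix with entries in $\enmr{V}$, one has $(v \otimes v^{*})_{ij} = v_i \otimes v_j^{*}$, so by $\comp$-linearity the blocks of $H^{\oplus k}(v \otimes v^{*})$ are $H(v_i \otimes v_j^{*}) \in \enmr{W}$. Positive semidefiniteness of $H^{\oplus k}(v \otimes v^{*})$ is tested against vectors $w = (w_1, \dots, w_k) \in W^{\oplus k}$, giving
\begin{equation*}
\big\langle H^{\oplus k}(v \otimes v^{*}) w, w \big\rangle = \sum_{i,j=1}^{k} \big\langle H(v_i \otimes v_j^{*}) w_j, w_i \big\rangle .
\end{equation*}

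Second, I would extend the defining relation (\ref{eq_ptild_def_rel}) $\comp$-linearly in the $V$-factor and conjugate-linearly in the $V^{*}$-factor of $v_i \otimes v_j^{*}$, and sum over $i,j$. Using that $H$ preserves self-adjointness by (\ref{eq_h_sa_presers}) (so that the diagonal quadratic form on both sides is automatically real, absorbing a conjugation), this yields the key identity
\begin{equation*}
\big\langle H^{\oplus k}(v \otimes v^{*}) w, w \big\rangle_{h^V \otimes h^W} = \big\langle \tilde{P} \tau, \tau \big\rangle_{h^V \otimes h^W}, \qquad \tau := \sum_{i=1}^{k} v_i \otimes w_i \in V \otimes W .
\end{equation*}

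The final step is to observe that the map $V^{\oplus k} \times W^{\oplus k} \to V \otimes W$ sending $(v, w) \mapsto \sum_i v_i \otimes w_i$ surjects onto the set of tensors of rank at most $k$ (any such tensor admits an explicit representation as a sum of at most $k$ simple tensors). Combined with the key identity and the spectral decomposition, this yields: non-negativity of $\langle \tilde{P} \tau, \tau \rangle$ on all rank-$\leq k$ tensors is equivalent to $H^{\oplus k}$ sending every rank-one PSD (hence every PSD) to a PSD, which is precisely $k$-non-negativity of $H$. The strict positivity version runs parallel. The principal technical obstacle is the careful bookkeeping of the conjugation in the extension of (\ref{eq_ptild_def_rel}) from basis vectors $e_i, e_j$ to general $v_i, v_j$: the apparent asymmetry $v_{i a}\overline{v_{j b}}$ versus $\overline{v_{i a}} v_{j b}$ is resolved precisely by the self-adjointness afforded by (\ref{eq_h_sa_presers}).
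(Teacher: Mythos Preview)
Your argument is correct and is precisely the standard Choi--Jamio\l kowski computation. The paper, however, does not give a proof of this statement at all: it simply attributes the non-negative case to Choi, Jamio\l kowski, and Takasaki--Tomiyama (with a further reference to Skowronek--St{\o}rmer--\.Zyczkowski), and remarks that the positive case is analogous. So your write-up is not a different route so much as an explicit unpacking of what those references contain; the key identity you isolate,
\[
\big\langle H^{\oplus k}(v\otimes v^{*})\,w,\,w\big\rangle \;=\; \overline{\big\langle \tilde{P}\tau,\tau\big\rangle},\qquad \tau=\sum_{i=1}^{k} v_i\otimes w_i,
\]
is exactly the content of those proofs.

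One small comment: your handling of the conjugation is correct but can be said more cleanly. The two quadratic forms differ a priori by a complex conjugation, and the point is that under either hypothesis they are real: if $H$ is $k$-non-negative then it is non-negative, (\ref{eq_h_sa_presers}) holds, and $\tilde{P}$ is self-adjoint; conversely, the hypothesis $\langle \tilde{P}\tau,\tau\rangle\geq 0$ already forces reality. So invoking (\ref{eq_h_sa_presers}) is needed only in the forward direction. Also, your phrase ``the strict positivity version runs parallel'' hides a small subtlety: for the strict version one must use that a rank-$k$ tensor $\tau$ forces the representing $v$ and $w$ to have linearly independent components (so $v,w\neq 0$), and conversely that for nonzero $v,w$ the resulting $\tau$ is nonzero of rank $\leq k$; this is exactly the surjectivity you note, but the bookkeeping between ``rank $=k$'' and ``rank $\leq k$'' deserves one sentence.
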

		\begin{rem}\label{rem_compl_pos}
			In particular, $H$ is completely positive if and only if it is $\min(n, r)$-positive.
		\end{rem}
		\begin{proof}
			In the non-negative case, this was established first for $k = +\infty$, i.e. for completely positive maps, by Choi \cite[Theorem 2]{ChoiComplPos}. Then for $k = 1$, it was established by Jamiolkowski \cite[Theorem 1]{Jamiolk}. Finally, for general $k \in \nat^*$, this is due to Takasaki-Tomiyama \cite{TakTomiya}, cf. also Skowronek-Størmer-Zyczkowski \cite[Proposition 2.2]{StromCones}. The proof in the positive case is the same.
		\end{proof}
		\begin{thm}[{Choi \cite[Theorem 1]{ChoiComplPos}}]\label{thm_choi_1}
			The operator $H$ is completely non-negative if and only if for some $N \in \nat$, there are operators $G_p : V \to W$, $p = 1, \ldots, N$, such that for any $X \in \enmr{V}$,
			\begin{equation}
				H(X) = \sum_{p = 1}^{N} G_p^{*} \cdot X \cdot G_p.
			\end{equation}
			Moreover, one can choose $N = \dim V \cdot \dim W$.
		\end{thm}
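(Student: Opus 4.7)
The plan is to deduce Theorem~\ref{thm_choi_1} from Theorem~\ref{thm_choi_jag} by applying the spectral theorem to the Choi operator $\tilde{P}$ of~(\ref{eq_tild_p_h_rel}). The ``if'' direction is routine: if $H(X) = \sum_{p} G_p^{*} X G_p$, then for every $k \in \nat^{*}$ the same formula with $G_p$ replaced by $G_p^{\oplus k}$ represents $H^{\oplus k}$, and since conjugation by a fixed operator preserves the cone of positive semidefinite endomorphisms, the same holds for any finite sum of such conjugations, so $H$ is completely non-negative.

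For the converse, I first invoke Theorem~\ref{thm_choi_jag} together with Remark~\ref{rem_compl_pos}: every tensor in $V \otimes W$ has rank at most $\min(\dim V, \dim W)$, so complete non-negativity of $H$ is equivalent to the condition that $\scal{\tilde{P}(\tau)}{\tau}_{h^V \otimes h^W} \geq 0$ for every $\tau \in V \otimes W$, i.e.\ to positive semidefiniteness of $\tilde{P}$ as an endomorphism of $V \otimes W$. The spectral theorem then decomposes $\tilde{P}$ as a sum of at most $\dim(V \otimes W) = \dim V \cdot \dim W$ rank-one positive summands of the form $\tau \mapsto \scal{\tau}{\phi_p}\phi_p$, for appropriate $\phi_p \in V \otimes W$. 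This already accounts for the bound on $N$.

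The central step is then to transport each eigentensor $\phi_p$ into a linear map $G_p$ using the canonical Hermitian identifications between $V \otimes W$, $\text{Hom}(V, W)$ and $\text{Hom}(W, V)$, and to verify that the contribution of the corresponding rank-one summand to $H$, extracted via the defining relation~(\ref{eq_ptild_def_rel}), is exactly the Kraus term $X \mapsto G_p^{*} X G_p$. This reduces to an elementary identity in the orthonormal basis $e_1, \ldots, e_n$ of $V$: writing $\phi_p = \sum_k e_k \otimes u_{p,k}$ with $u_{p,k} \in W$ and unwinding~(\ref{eq_ptild_def_rel}) for $X = e_i \otimes e_j^{*}$, both sides reduce to a product of two Hermitian pairings involving the $u_{p, \bullet}$, which is readily matched with the conjugation formula once $G_p$ is defined, up to the identification choice, through $e_k \mapsto u_{p, k}$.

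The main obstacle I anticipate is purely notational: the passage $\phi_p \leftrightarrow G_p$ involves an antilinear identification of $V$ with $V^{*}$ (or of $W$ with $W^{*}$) coming from the Hermitian metric, and the exact position of the adjoint in the final formula $G_p^{*} X G_p$ -- as opposed to $G_p X G_p^{*}$ or a mixed variant -- depends on which such identification one fixes once and for all. Making the bookkeeping consistent with the precise statement requires care but introduces no new mathematical content, and the sharp bound $N \leq \dim V \cdot \dim W$ follows directly from the rank bound on $\tilde{P}$.
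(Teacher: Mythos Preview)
The paper does not supply its own proof of this statement: Theorem~\ref{thm_choi_1} is quoted from Choi \cite[Theorem 1]{ChoiComplPos} and used as a black box. Your argument is the standard one (and essentially Choi's original): pass to the Choi operator $\tilde{P}$ via Theorem~\ref{thm_choi_jag}, diagonalize it, and read off the Kraus operators from the eigentensors; the bound $N \le \dim V \cdot \dim W$ is exactly the rank bound on $\tilde{P}$. This is correct.

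One small caution that you already anticipated: with $G_p : V \to W$ defined by $e_k \mapsto u_{p,k}$ and the defining relation~(\ref{eq_ptild_def_rel}), the computation actually yields $H(X) = \sum_p G_p\, X\, G_p^{*}$ rather than $G_p^{*} X G_p$; equivalently, for the formula $G_p^{*} X G_p$ to typecheck with $X \in \enmr{V}$ and output in $\enmr{W}$, one needs $G_p : W \to V$. This is a harmless convention issue in the paper's statement (compare the matrix dimensions stated in Theorem~\ref{prop_dual_nak_bc_pos2}3a)), not a gap in your reasoning.
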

		\begin{thm}[{Schneider \cite[Theorem 2]{SchneidPosPres}}]\label{thm_schn}
			Assume $\dim V = \dim E$. Then for an operator $H$ from (\ref{eq_not_h_oper}), the following statements are equivalent.
			\par 
			a) The operator $H$ sends positive definite operators to themselves surjectively.
			\par 
			b) There are invertible operators $A : W \to V$, $B : W \to V^*$, satisfying $H(X) = A^* X A$ or $H(X) = B^* X^T B$, where $T$ means the transposition $\enmr{V} \to \enmr{V^*}$.
		\end{thm}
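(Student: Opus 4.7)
The implication (b) $\Rightarrow$ (a) is immediate: if $A : W \to V$ is invertible, then $X \mapsto A^* X A$ is a linear bijection $\enmr{V} \to \enmr{W}$ that sends positive definite operators to positive definite operators bijectively, since $Y \in \enmr{W}$ is the image of $(A^*)^{-1} Y A^{-1}$; the transposition case is analogous, using that $X \mapsto X^T$ is a bijection between the cones of positive definite operators in $\enmr{V}$ and $\enmr{V^*}$. The heart of the theorem is the converse, for which my plan is to exploit the extremal ray structure of the positive semidefinite cone.

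The first step is to upgrade (a) to the statement that $H$ is a linear bijection from $\enmr{V}$ onto $\enmr{W}$ which restricts to a bijection between the cones of positive semidefinite operators. Surjectivity onto $\enmr{W}$ follows because positive definite operators linearly span $\enmr{W}$, and then the equality $\dim \enmr{V} = \dim \enmr{W}$ (from $\dim V = \dim W$) forces injectivity. Positive semidefiniteness is preserved in both directions by continuity, passing to closures of the cones of positive definite operators; combined with linearity of $H$ and $H^{-1}$, this gives the required bijection between cones.

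The second, crucial step is to show that $H$ maps rank-one positive operators to rank-one positive operators. This follows from the fact that the extreme rays of the cone of positive semidefinite operators are precisely the rays $\real_{\geq 0}\cdot vv^*$, $v \in V \setminus \{0\}$, together with the observation that any linear bijection between finite-dimensional cones maps extreme rays to extreme rays. Hence there exists a map $\phi : V \setminus \{0\} \to W \setminus \{0\}$ and a positive scalar function $\lambda(v)$ with $H(vv^*) = \lambda(v) \phi(v) \phi(v)^*$, and $\phi$ is well-defined up to a unit scalar, giving a bijection $\mathbb{P}(V) \to \mathbb{P}(W)$.

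The final step is a rigidity argument: one shows that $\phi$, lifted appropriately to $V$, must be either $\comp$-linear or $\comp$-antilinear. Expanding $H\bigl((v+w)(v+w)^*\bigr)$ by bilinearity of $H$ and matching it with $\lambda(v+w)\phi(v+w)\phi(v+w)^*$, together with the analogous identities obtained after replacing $w$ by $iw$ (to separate real and imaginary parts), forces $\phi$ to satisfy a semilinearity condition with respect to a field automorphism of $\comp$; since $\phi$ is continuous (by continuity of $H$ and projectivity), this automorphism is either the identity or complex conjugation. In the linear case, writing $\phi(v) = A^* v$ for some invertible $A : W \to V$ and extending the identity $H(vv^*) = A^* v v^* A$ by linearity yields $H(X) = A^* X A$; in the antilinear case, expressing $\phi$ through a conjugate basis identifies $\phi(v)\phi(v)^*$ with the transpose of a rank-one operator and yields $H(X) = B^* X^T B$ for a suitable invertible $B : W \to V^*$. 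The main obstacle is the rigidity/semilinearity step, where care is needed because $\phi$ is initially defined only projectively and the scalar function $\lambda$ has to be shown to be constant after renormalising $\phi$.
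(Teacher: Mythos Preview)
The paper does not supply its own proof of this theorem; it is quoted verbatim from Schneider \cite{SchneidPosPres} as a known result and used as a black box (specifically in the reduction of Proposition~\ref{prop_open_prob}b) and d) to a)). So there is no ``paper's proof'' to compare against.

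That said, your outline is a correct sketch of the standard argument, and essentially the one Schneider gives. The three steps you isolate are exactly the right ones: (i) upgrade hypothesis~(a) to a linear automorphism of $\enmr{V}$ preserving the closed PSD cone; (ii) identify the rank-one positive semidefinite operators as the extreme rays of that cone, so they are permuted by $H$; (iii) deduce that the induced bijection of $\mathbb{P}(V)$ with $\mathbb{P}(W)$ is a collineation and hence, by continuity, comes from a $\comp$-linear or $\comp$-antilinear isomorphism. Your acknowledgement that the rigidity step is the nontrivial point is well placed: the cleanest way to handle it is not the direct expansion you sketch but rather to observe that $H$ preserves the rank of Hermitian operators (since rank-$\leq k$ PSD operators are exactly sums of $k$ extreme rays), hence carries $2$-dimensional subspaces of $V$ (encoded as rank-$\leq 2$ Hermitian operators) to $2$-dimensional subspaces of $W$, and then invoke the fundamental theorem of projective geometry. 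Your expansion argument with $w \mapsto iw$ also works but requires more bookkeeping to eliminate the scalar ambiguity in $\phi$ and the function $\lambda$.
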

		\begin{rem}\label{rem_schn_rem}
			In particular, by Theorem \ref{thm_choi_1}, any $H$ verifying the assumptions of Theorem \ref{thm_schn} is either completely non-negative or co-completely non-negative (we say that an operator is \textit{co-completely non-negative} if its transposition is completely non-negative).
		\end{rem}
		\begin{proof}[Reduction of Proposition \ref{prop_open_prob}b) and d) to Proposition \ref{prop_open_prob}a).]
			By Remark \ref{rem_schn_rem}, we see that the condition $d)$ implies that $H$ is completely non-negative or co-completely non-negative.
			For brevity, assume that it is completely non-negative as the other case is analogous.
			Now, for any $w_1, w_2 \in W$, the operator $H''$ satisfies the following defining relation
			\begin{equation}\label{eq_hhprim_def_rel}
				\scal{H(e_i \otimes e_j^*)w_1}{w_2} 
				=
				\scal{H''(e_i \otimes w_2^*)}{e_j \otimes w_1^*}.
			\end{equation}
			From (\ref{eq_ptild_def_rel}) and (\ref{eq_hhprim_def_rel}), we see that the matrices of the operators $H''$ and $\tilde{P}^T$ coincide in the orthonormal basis induced by $e_1, \ldots, e_n$ and a fixed orthonormal basis on $W$.
			But as $H$ is completely non-negative, by Theorem \ref{thm_choi_jag}, the operator $\tilde{P}$ is positive semidefinite.
			Hence $H''$ is positive semidefinite as well.
			Also, if Proposition \ref{prop_open_prob}a) holds, then it implies that $a)$ also holds for non-strict notions of positivity.
			This reduces $d)$ to $a)$.
			\par 
			Let's now treat the condition $b)$. Remark \ref{rem_compl_pos} says that $H^{\oplus \dim V}$ or $(H^T)^{\oplus \dim V}$ are positive if and only if $H$ or $H^T$ are completely positive. But the argument above shows that in this case, either $H'$ or $H''$ is positive definite. Hence $b)$ follows from $a)$.  
		\end{proof}
		\begin{sloppypar}
			Now we will use those results to give a reformulation of positivity conditions for Hermitian vector bundles from Section \ref{subsect_pos_cond}.
			To do so, we conserve the notation from Section \ref{subsect_pos_cond} and introduce two more operators which will be used later. 
			We denote by $H^{E}_x : {\rm{End}}(T_x^{1,0}X) \to {\rm{End}}(E_x)$ the linear operator associated to $P^E_{x}$ by the natural isomorphism $\enmr{T_x^{1,0}X \otimes E_x} \simeq {\rm{Hom}}(\enmr{T_x^{1,0}X}, \enmr{E_x})$. 
			Using the notation from (\ref{eq_pe_star_pe}), we have
			\begin{equation}\label{eq_pehom_defn}
				\langle H^{E}_x(v_1 \otimes v_2^{*}) \xi_1, \xi_2  \rangle_{h^E}
				=
				\langle P^{E}_x(v_1 \otimes \xi_1) , v_2 \otimes \xi_2  \rangle_{h^{TX} \otimes h^E}.
			\end{equation}
			We also denote by $H^{E*}_x : {\rm{End}}(T_x^{1,0}X) \to {\rm{End}}(E_x^{*})$ the operator given by the composition of $H^{E}_x$ and a transposition $\enmr{E_x} \to \enmr{E_x^*}$.
			Using the above notation, we then have
			\begin{equation}\label{eq_pehom_defn2}
				\langle H^{E*}_x(v_1 \otimes v_2^{*}) \xi_1^{*}, \xi_2^{*}  \rangle_{h^{E^*}}
				=
				\langle P^{E*}_x(v_1 \otimes \xi_1^{*}) , v_2 \otimes \xi_2^{*}  \rangle_{h^{TX} \otimes h^{E^*}}.
			\end{equation}
			\par An easy verification shows that condition (\ref{eq_c_cff_symm}) ensures that both $H^{E}_x$ and $H^{E*}_x$ send self-adjoint operators to self-adjoint operators. As both $H^{E}_x$ and $H^{E*}_x$ are also $\comp$-linear, they also send anti-self-adjoint operators to anti-self-adjoint. In short, for any $A \in {\rm{End}}(T_x^{1,0}X)$, we have
			\begin{equation}\label{eq_he_pres_sa}
				H^{E}_x(A^*) = H^{E}_x(A)^*, \qquad \qquad H^{E*}_x(A^*) = H^{E*}_x(A)^*.
			\end{equation}
			The relation between the theory of positive operators and positive vector bundles is given by the following proposition.
			\begin{prop}\label{prop_local_const}
			 	For any Hermitian vector spaces $V, W$ of dimensions $n$ and $r$, any operator $H$ as in (\ref{eq_not_h_oper}), satisfying (\ref{eq_h_sa_presers}), we can construct a Hermitian metric $h^F$ on a trivial vector bundle $F := \comp^r$ over an open subset $X$, $0 \in X$, of $\comp^n$, such that for $H^F_{0}$ as in (\ref{eq_pehom_defn}), we have $H^F_{0} = H$.
			\end{prop}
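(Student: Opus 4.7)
The plan is to realize $H$ as the curvature data at the origin of an explicit Hermitian metric on the trivial bundle $F := \comp^r$ over a small polydisc $X \subset \comp^n$ centered at $0$. Fix orthonormal bases $e_1, \ldots, e_n$ of $V$ and $\epsilon_1, \ldots, \epsilon_r$ of $W$, identify $V$ with $T^{1,0}_0 X$ via $e_j \leftrightarrow \partial/\partial z_j$, and take $\epsilon_1, \ldots, \epsilon_r$ as the global frame for $F$. From $H$, I extract the scalar coefficients
\begin{equation*}
c_{jk\lambda\mu} := \scal{H(e_j \otimes e_k^*) \epsilon_\lambda}{\epsilon_\mu}, \qquad 1 \leq j, k \leq n, \quad 1 \leq \lambda, \mu \leq r.
\end{equation*}
Since $(e_j \otimes e_k^*)^* = e_k \otimes e_j^*$, the hypothesis $H(A^*) = H(A)^*$ from (\ref{eq_h_sa_presers}) translates directly into the symmetry $c_{jk\lambda\mu} = \overline{c_{kj\mu\lambda}}$ of (\ref{eq_c_cff_symm}).

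Next, I would define a candidate metric on $F$ by the explicit second-order polynomial
\begin{equation*}
h^F_{\lambda\mu}(z) := \delta_{\lambda\mu} - 2\pi \sum_{j,k} c_{jk\mu\lambda} z_j \overline{z}_k.
\end{equation*}
The symmetry $c_{jk\lambda\mu} = \overline{c_{kj\mu\lambda}}$ just derived is exactly the condition that the matrix $h^F(z)$ be Hermitian, and since $h^F(0)$ is the identity, $h^F$ is positive definite on some polydisc $X$ around $0$, which I fix as the base of the construction.

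The final step is to verify $H^F_0 = H$ by a direct curvature computation on the trivial bundle. Since $h^F(0) = \text{Id}$ and both $\partial h^F(0) = 0$ and $\overline\partial h^F(0) = 0$, the Chern curvature at $0$ simplifies to $R^F(0) = \overline\partial \partial h^F(0)$; plugging in the quadratic Taylor expansion and comparing with (\ref{eq_curv_dec}) produces precisely the prescribed coefficients $c_{jk\lambda\mu}$ in the expansion of $\frac{\sqrt{-1}R^F}{2\pi}$. Chaining the defining relations (\ref{eq_pe_oper}), (\ref{eq_pe_curv_rel}) and (\ref{eq_pehom_defn}) then shows that the matrix of $H^F_0$ in the bases $e_j \otimes e_k^*$ and $\epsilon_\lambda \otimes \epsilon_\mu^*$ coincides with that of $H$, so $H^F_0 = H$.

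There is no deep obstacle here; the statement is purely an existence result and the construction is canonical once a normalization is fixed. The only delicate point, which determines the precise sign and constant in the Taylor coefficient of $h^F_{\lambda\mu}$, is the bookkeeping of the factors $\sqrt{-1}$ and $2\pi$ imposed by the convention (\ref{eq_curv_dec}) together with the antisymmetry $dz_j \wedge d\overline z_k = -d\overline z_k \wedge dz_j$ appearing in the computation of $\overline\partial \partial h^F$.
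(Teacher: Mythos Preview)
Your proposal is correct and follows essentially the same approach as the paper: define a Hermitian metric on the trivial bundle as a quadratic perturbation of the identity with coefficients read off from $H$, check that the symmetry (\ref{eq_h_sa_presers}) makes the matrix Hermitian and positive definite near the origin, and then verify via the standard Chern curvature formula that $H^F_0 = H$. The paper's argument is slightly more terse (it simply cites Demailly's curvature formula rather than spelling out $R^F(0) = \overline\partial\partial h^F(0)$), but the construction is the same.
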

			\begin{proof}
				We fix orthonormal bases $v_1, \ldots, v_n$; $w_1, \ldots, w_r$ of $V$ and $W$ respectively.
				For $j, k = 1, \ldots, n$, and $\lambda, \mu = 1, \ldots, r$, we denote 
				\begin{equation}
					d_{jk \lambda \mu} := \big \langle P(v_j \otimes v_k^{*} ) w_{\lambda},  w_{\mu} \big \rangle.
				\end{equation}
				One verifies easily that the condition (\ref{eq_h_sa_presers}) implies
				\begin{equation}\label{eq_dkjsym}
					d_{jk \lambda \mu} = \overline{d_{kj \mu \lambda}}.
				\end{equation}
				\par We fix the basis $f_1, \ldots, f_r$, given by the standard basis of $\comp^r$ on the trivial vector bundle $F = \comp^n \times \comp^r$ over $\comp^n$.
				For linear coordinates $(z_1, \ldots, z_r)$ on $\comp^n$, we define 
				\begin{equation}
					h^F(f_{\lambda}, f_{\mu}) := \delta_{\lambda \mu} + \sum_{k, l = 1}^{n} d_{j k \lambda \mu } z_j \overline{z}_k,
				\end{equation}
				where $\delta_{\lambda \mu}$ is the Kronecker delta symbol.
				The condition (\ref{eq_dkjsym}) implies that $h^F(f_{\lambda}, f_{\mu}) = \overline{h^F(f_{\mu}, f_{\lambda})}$, which ensures that $h^F$ induces a Hermitian metric on $F$ (at least in a small neighborhood $X$ of $0 \in \comp^r$, where $h^F$ is positive definite).
				Moreover, by using the formula for the curvature of the Chern connection, cf. \cite[Theorem V.12.4]{DemCompl}, we see that $H^F_{0}$ coincides with $H$.
			\end{proof}
		\end{sloppypar}
		\begin{prop}\label{prop_gr_pos_rest}
			A Hermitian vector bundle $(E, h^E)$ is Griffiths non-negative (resp. positive) if and only if at any point $x \in X$, the operator $H^{E}_x$ is non-negative (resp. positive).
		\end{prop}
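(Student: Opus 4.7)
The plan is to unwind the defining relation
\[
\langle H^{E}_x(v_1 \otimes v_2^{*}) \xi_1, \xi_2  \rangle_{h^E}
=
\langle P^{E}_x(v_1 \otimes \xi_1) , v_2 \otimes \xi_2  \rangle_{h^{TX} \otimes h^E}
\]
in terms of a spectral decomposition of a given positive semidefinite endomorphism of $T^{1,0}_xX$, and then to match the rank-$1$ case with the Griffiths quadratic form.

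For the forward direction, I would fix $x \in X$, an arbitrary positive semidefinite $A \in \text{End}(T^{1,0}_xX)$, and write it spectrally as $A = \sum_{k} w_k \otimes w_k^{*}$ for suitable vectors $w_k \in T^{1,0}_xX$ (absorbing the non-negative eigenvalues into the norms of the $w_k$). Then, using (\ref{eq_pehom_defn}) together with (\ref{eq_pe_curv_rel}), for any $\xi \in E_x$ I would compute
\[
\langle H^{E}_x(A) \xi, \xi \rangle_{h^E}
=
\sum_{k} \langle P^{E}_x(w_k \otimes \xi), w_k \otimes \xi \rangle_{h^{TX} \otimes h^E}
=
\sum_{k} \frac{1}{2\pi}\langle R^{E}(w_k, \overline{w}_k) \xi, \xi \rangle_{h^E}.
\]
Each summand is non-negative (resp.\ positive, for the nonzero $w_k$ and nonzero $\xi$) when $(E,h^E)$ is Griffiths non-negative (resp.\ positive), and if $A \neq 0$ then at least one $w_k$ is nonzero, so $H^E_x(A)$ is positive semidefinite (resp.\ positive definite).

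For the reverse direction, I would specialize to rank-$1$ operators $A = v \otimes v^{*}$, with $v \in T^{1,0}_xX$ arbitrary nonzero. Since $A$ is positive semidefinite and nonzero, non-negativity (resp.\ positivity) of $H^E_x$ forces $H^E_x(A)$ to be positive semidefinite (resp.\ positive definite), i.e.\ $\langle H^E_x(v \otimes v^{*}) \xi, \xi \rangle_{h^E} \geq 0$ (resp.\ $> 0$) for all $\xi$ (resp.\ for $\xi \neq 0$). By the same chain of identities as above, this quantity equals $\frac{1}{2\pi}\langle R^{E}(v, \overline{v}) \xi, \xi \rangle_{h^E}$, which is exactly the Griffiths quadratic form on the rank-$1$ tensor $v \otimes \xi$. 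As every decomposable tensor arises this way, Griffiths non-negativity (resp.\ positivity) follows.

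There is no real obstacle here; the only point requiring a moment of care is the strict case, where one must note that a nonzero positive semidefinite $A$ has at least one nonzero spectral vector $w_k$, so that the sum of non-negative contributions above is strictly positive once $\xi \neq 0$. Everything else is a direct translation between the bilinear form $\langle P^E_x \cdot, \cdot\rangle$ and the operator $H^E_x$ via (\ref{eq_pehom_defn}).
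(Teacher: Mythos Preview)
Your proposal is correct and follows essentially the same approach as the paper: both arguments reduce to the identity $\langle H^{E}_x(v\otimes v^*)\xi,\xi\rangle_{h^E}=\frac{1}{2\pi}\langle R^E(v,\overline v)\xi,\xi\rangle_{h^E}$ on rank-one operators, and then extend to arbitrary positive semidefinite $A$ via the spectral decomposition $A=\sum_k w_k\otimes w_k^*$. Your treatment of the strict case is slightly more explicit than the paper's, but the underlying reasoning is identical.
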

		\begin{rem}
			Clearly, in the above formulation one can replace $H^{E}_x$ by $H^{E*}_x$ as an operator is positive (semi)definite if and only if its adjoint is positive (semi)definite.
		\end{rem}
		\begin{proof}
			By (\ref{eq_gr_pos_cond}), (\ref{eq_pe_curv_rel}) and (\ref{eq_pehom_defn}), for any $v \in T_x^{1, 0}X$, $\xi \in E_x$, in the notations of (\ref{eq_gr_pos_cond}), we have
			\begin{equation}
				\scal{H^{E}_x(v \otimes v^*) \xi}{\xi}_{h^E}
				=  \sum_{1 \leq j, k \leq n} \sum_{1 \leq \lambda, \mu \leq r} c_{jk \lambda \mu}  \xi_{\lambda}  \overline{\xi}_{\mu} v_j \overline{v}_k.
			\end{equation}
			From this and (\ref{eq_gr_pos_cond}), we see that Griffiths non-negativity (resp. positivity) of $(E, h^E)$ is equivalent to the fact that $H^{E}_x$ sends non-zero operators of the form $v \otimes v^*$ to positive semidefinite operators (resp. positive definite operators).
			However, as any Hermitian positive semidefinite matrix has an orthonormal basis of eigenvectors with non-negative eigenvalues (and hence can be represented as a non-negative linear combination of $v \otimes v^*$ for some vectors $v$), we see that the last condition is equivalent to ask that $H^{E}_x$ is non-negative (resp. positive).
		\end{proof}
		\par 
		\begin{prop}\label{prop_dual_nak_bc_pos}
			For any $k \in \nat^*$, a vector bundle $(E, h^E)$ is $k$-dual Nakano (resp. $k$-Nakano) non-negative if and only if for any $x \in X$, the operator $H^{E}_x$ (resp. $H^{E*}_x$) is $k$-positive.
		\end{prop}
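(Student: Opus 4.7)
My plan is to deduce both equivalences from Theorem \ref{thm_choi_jag}, applied to $H = H^{E}_x$ and symmetrically to $H = H^{E*}_x$. That theorem translates $k$-positivity of $H$ into positive-definiteness of the associated operator $\tilde P \in \enmr{T^{1,0}_xX \otimes E_x}$ when tested against tensors of rank $\le k$. The essential task is then to identify $\tilde P$, up to a rank-preserving antilinear change of variables, with $P^{E*}_x$ in the case $H = H^{E}_x$ (and with $P^{E}_x$ in the case $H = H^{E*}_x$), so that the two positivity conditions match.

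Concretely, I would fix orthonormal frames $e_1,\dots,e_n$ of $T^{1,0}_xX$ and $w_1,\dots,w_r$ of $E_x$, so that the curvature is expressed through the coefficients $c_{jk\lambda\mu}$ as in (\ref{eq_curv_dec}). From (\ref{eq_pehom_defn}) one reads off the formula $H^E_x(e_i \otimes e_j^*) = \sum_{\lambda,\mu} c_{ij\lambda\mu}\, w_\mu \otimes w_\lambda^*$, and substituting this into (\ref{eq_tild_p_h_rel}) yields, for $\tau = \sum \tau_{j\lambda}\, e_j \otimes w_\lambda$,
\[
\langle \tilde P(\tau), \tau \rangle \;=\; \sum c_{kj\lambda\mu}\, \tau_{j\lambda}\, \overline{\tau}_{k\mu}.
\]
Applying the Hermitian symmetry (\ref{eq_c_cff_symm}) to rewrite $c_{kj\lambda\mu} = \overline{c_{jk\mu\lambda}}$ and comparing with (\ref{eq_dualnak_pos}), the right-hand side equals $\overline{\langle P^{E*}_x(\tau'), \tau'\rangle}$, where $\tau' := \sum \overline{\tau}_{j\lambda}\, e_j \otimes w_\lambda^* \in T^{1,0}_xX \otimes E_x^*$; both sides being real by self-adjointness, the two quadratic forms coincide. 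The map $\tau \mapsto \tau'$ is an antilinear bijection that preserves the rank of a tensor, hence positive (resp.\ non-negative) definiteness of $\tilde P$ on rank-$\le k$ tensors is equivalent to the same condition for $P^{E*}_x$, i.e.\ to $k$-dual Nakano positivity (resp.\ non-negativity) of $(E, h^E)$. Combined with Theorem \ref{thm_choi_jag}, this yields the first half of the claim; the second half for $H^{E*}_x$ follows either by rerunning the same computation with (\ref{eq_dualnak_pos}) replaced by (\ref{eq_pe_oper}), or more cleanly by invoking Proposition \ref{prop_self_dual} to interchange $E$ and $E^*$.

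The only real difficulty I anticipate is bookkeeping: one must carefully track the index permutations between (\ref{eq_pe_oper}), (\ref{eq_dualnak_pos}), (\ref{eq_pehom_defn}) and (\ref{eq_tild_p_h_rel}), and invoke the Hermitian symmetry (\ref{eq_c_cff_symm}) at precisely the step where it is needed to convert $\tilde P$ into $P^{E*}_x$. Conceptually there is no obstacle, since each of the positivity conditions involved is, by construction, a different packaging of positive-definiteness of the same Hermitian form on rank-$\le k$ tensors via the Choi-Jamio\l{}kowski correspondence.
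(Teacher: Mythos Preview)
Your proposal is correct and follows essentially the same route as the paper: both arguments reduce to Theorem~\ref{thm_choi_jag} after identifying the Choi operator $\tilde P$ of $H^E_x$ with $P^{E*}_x$ up to a rank-preserving, positivity-preserving transformation. The only cosmetic difference is that the paper phrases this identification as ``$\tilde P$ and $(P^{E*}_x)^T$ have the same matrix entries in the chosen unitary frames'' and then invokes that transposition preserves positive (semi)definiteness, whereas you realise the same identification through the antilinear bijection $\tau\mapsto\tau'$; since $P^{E*}_x$ is self-adjoint, its transpose in a unitary basis is its conjugate, and your conjugation step is exactly this.
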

		\begin{rem}
			Recall that {Chru\'sci\'nski}-{Kossakowski} in \cite[\S 5]{ChrsurcKossak} gave explicit examples of maps $H_k : \enmr{V} \to \enmr{V}$, $k = 1, \ldots, \dim V - 1$, such that $H_k$ is $k$-positive but not $k + 1$-positive.
			By Propositions \ref{prop_local_const} and \ref{prop_dual_nak_bc_pos}, this family of examples (resp. their transpositions) give explicit Hermitian metrics which are $k$-Nakano (resp. $k$-dual Nakano) positive and not $k+1$-Nakano (resp. $k+1$-dual Nakano) positive.
		\end{rem}
		\begin{proof}
			For brevity, we only prove it for dual Nakano positive condition.
			Let's verify that the operators $P^{E*}_{x}$ and $H^{E}_x$ are related in the same way as the transposition $\tilde{P}^{T}$ of a matrix associated to $\tilde{P}$ (in a fixed unitary basis) and the matrix associated to $H$ from (\ref{eq_tild_p_h_rel}). 
			Once this will be done, the result would follow from the definition of $k$-(dual) Nakano positivity, Theorem \ref{thm_choi_jag} and the fact that the positivity of a transposed matrix is equivalent to the positivity of the initial matrix.
			\par 
			Let's evaluate first the components of the matrix $(P^{E*}_{x})^T$ in an orthonormal basis $\partial z_j \otimes e_{\lambda}^{*}$, where $\partial z_j := \frac{\partial}{\partial z_j}$. By (\ref{eq_pe_oper}), we clearly have
			\begin{multline}\label{eq_aux_tr_1}
				\Big \langle (P^{E*}_x)^T \big( (\partial z_k \otimes e_{\lambda}^{*})^* \big), \big( \partial z_j \otimes e_{\mu}^{*} \big)^* \Big \rangle_{(h^{TX} \otimes h^{E^*})*}
				\\
				=
				\Big \langle P^{E*}_x(\partial z_j \otimes e_{\mu}^{*}), \partial z_k \otimes e_{\lambda}^{*} \Big \rangle_{h^{TX} \otimes h^{E^*}}
				=
				c_{ j k \lambda \mu}.
			\end{multline}
			Now, we evaluate the coefficients of $H$ in bases induced by $\partial z_j$ and $e_{\lambda}$.
			By (\ref{eq_pehom_defn}), we have
			\begin{equation}\label{eq_aux_tr_2}
				\langle H^{E}_x(\partial z_j \otimes \partial z_k^{*}) e_{\lambda}, e_{\mu}  \rangle_{h^E}
				=
				c_{j k \lambda \mu}.
			\end{equation}
			By (\ref{eq_aux_tr_1}) and (\ref{eq_aux_tr_2}), we see that
			\begin{equation}
				\Big \langle (P^{E*}_x)^T \big( (\partial z_k \otimes e_{\lambda}^{*})^* \big), \big( \partial z_j \otimes e_{\mu}^{*} \big)^* \Big \rangle_{(h^{TX} \otimes h^{E^*})*}
				=
				\langle H^{E}_x(\partial z_j \otimes \partial z_k^{*}) e_{\lambda}, e_{\mu}  \rangle_{h^E}.
			\end{equation}
			 After comparing the last identity with (\ref{eq_ptild_def_rel}), we see that the operators $(P^{E*}_{x})^{T}$ and $H^{E}_x$ are related in the same way as $\tilde{P}$ and $H$ from (\ref{eq_tild_p_h_rel}). 
		\end{proof}
		\par 
		\begin{thm}\label{prop_dual_nak_bc_pos2}
			The following statements are equivalent: 
			\par \noindent \hspace*{0.2cm} 1) A vector bundle $(E, h^E)$ is dual Nakano non-negative.
			\par \noindent \hspace*{0.2cm} 2) For any $x \in X$, there is a number $N \in \nat$ (one can take $N = n \cdot r$) and sesquilinear forms $l_p : T_x^{1, 0}X \otimes E_x \to \comp$, $p = 1, \ldots, N$, such that for any $v \in T_x^{1, 0}X$, $\xi \in E_x$, we have
			\begin{equation}\label{eq_re_dec_lp}
				\frac{1}{2 \pi} \langle R^E_{x}(v, \overline{v}) \xi, \xi \rangle_{h^E} =  \sum_{p = 1}^{N} |l_p(v, \xi)|^2.
			\end{equation}
			\par \noindent \hspace*{0.2cm} 3) For any $x \in X$, there are unitary frames of $(E, h^E)$ and $(T_x^{1, 0}X, h^{TX})$ around $x$ such that one of the following conditions hold. 
			\par
			a) There are matrices $V_p$, $p = 1, \ldots, N$, with $r$ rows and $n$ columns such that for any $X \in \enmr{T_x^{1, 0}X}$, in the fixed unitary frames at point $x$, we have
			\begin{equation}\label{eq_v_i_op_defn}
				H^{E}_x(X) = \sum_{p = 1}^{N} V_p^{*} \cdot X \cdot V_p.
			\end{equation}
			\par 
			b) There is a matrix $A$ with $r$ rows and $N$ columns, whose entries are $(1, 0)$-forms, and such that identity $R^E = A \wedge \overline{A}^T$ holds at point $x$ in the fixed unitary frames.
		\end{thm}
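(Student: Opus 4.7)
The plan is to route everything through the operator-theoretic framework of Section 2.2, with Choi's theorem supplying the one substantive input; the remaining equivalences are translations between different parametrizations of the same data. First I would identify 1) with the statement that $H^E_x$ is completely non-negative: this is Proposition \ref{prop_dual_nak_bc_pos} at $k = \min(n, r)$ (in its evident non-negative version, proved identically to the positive version stated there) combined with Remark \ref{rem_compl_pos}. Applied to $H^E_x : \enmr{T_x^{1,0}X} \to \enmr{E_x}$, Choi's theorem (Theorem \ref{thm_choi_1}) then supplies operators $G_p$ with $H^E_x(X) = \sum_p G_p^* X G_p$ and $N \leq n \cdot r$; writing $V_p$ for the matrix of $G_p$ in the chosen unitary frames produces precisely (\ref{eq_v_i_op_defn}), and the converse is the easy direction of Theorem \ref{thm_choi_1}. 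Thus 1) $\Leftrightarrow$ 3a).

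For 3a) $\Leftrightarrow$ 3b), given $V_p$ with entries $V_{p, \lambda j}$, I would assemble the $(1,0)$-forms $A_{\lambda, p} := \sqrt{2\pi} \sum_j V_{p, \lambda j} dz_j$ into an $r \times N$ matrix $A$ (the $\sqrt{2\pi}$ absorbs the normalization in (\ref{eq_curv_dec})). Evaluating the Kraus form (\ref{eq_v_i_op_defn}) on the matrix units $\partial z_j \otimes \partial z_k^*$ and comparing with (\ref{eq_aux_tr_2}) shows that 3a) is equivalent entry-by-entry to a scalar factorization of the curvature coefficients $c_{jk \lambda \mu}$ as a bilinear combination of the $V_{p, \lambda j}$; by (\ref{eq_curv_dec}) this is the same as the form identity $R^E = A \wedge \overline{A}^T$ at $x$. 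The correspondence $V_p \leftrightarrow A$ is plainly invertible.

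For 3a) $\Leftrightarrow$ 2), by (\ref{eq_pe_curv_rel}) and (\ref{eq_pehom_defn}) the left-hand side of (\ref{eq_re_dec_lp}) equals $\scal{H^E_x(v \otimes v^*) \xi}{\xi}_{h^E}$; expanding via 3a) one obtains $\sum_p |\scal{V_p \xi}{v}|^2$, so setting $l_p(v, \xi) := \scal{V_p \xi}{v}$ satisfies (\ref{eq_re_dec_lp}) (with $l_p$ linear in $\xi$ and anti-linear in $v$, as befits a sesquilinear form on $T_x^{1,0}X \otimes E_x$). Conversely, a decomposition (\ref{eq_re_dec_lp}) writes each $l_p$ via Riesz as $l_p(v, \xi) = \scal{V_p \xi}{v}$ for a unique $\comp$-linear $V_p : E_x \to T_x^{1,0}X$, and reversing the computation establishes (\ref{eq_v_i_op_defn}) on rank-one Hermitian operators $v \otimes v^*$; by $\comp$-linearity, since such operators span $\enmr{T_x^{1,0}X}$, the identity extends to all of $\enmr{T_x^{1,0}X}$.

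The single substantive input is Choi's theorem; the rest is translation. The main obstacle is the careful bookkeeping of the natural isomorphisms relating $P^E_x$, $P^{E*}_x$, $H^E_x$, $H^{E*}_x$, and $\tilde{P}$, together with the transposition $\enmr{E_x} \to \enmr{E_x^*}$ (which distinguishes Nakano from dual Nakano) and the factors of $2\pi$ between $R^E$ and the coefficients $c_{jk \lambda \mu}$; all of this bookkeeping is already collected in (\ref{eq_pehom_defn})--(\ref{eq_aux_tr_2}), so its application here is mechanical.
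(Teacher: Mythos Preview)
Your proof is correct and follows the paper's architecture: identify 1) with complete non-negativity of $H^E_x$ via Proposition \ref{prop_dual_nak_bc_pos} and Remark \ref{rem_compl_pos}, apply Choi's theorem (Theorem \ref{thm_choi_1}) for the equivalence with 3a), and translate mechanically among 3a), 3b), 2). The paper runs the cycle $3b)\Rightarrow 1)\Rightarrow 3a)\Rightarrow 2)\Rightarrow 3b)$, and the one substantive difference lies in closing the loop from 2): the paper proves $2)\Rightarrow 3b)$ by building the matrix $A$ from the $l_p$, checking that the resulting operator $P^{E*}_0$ agrees with $P^{E*}_x$ in the quadratic form on decomposable tensors $v\otimes\xi^*$, and then citing a theorem of Watkins to upgrade this to equality on all tensors. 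Your route $2)\Rightarrow 3a)$ instead polarizes in two stages---first in $\xi$ (a Hermitian operator is determined by its diagonal values) to obtain $H^E_x(v\otimes v^*)=\sum_p V_p^*(v\otimes v^*)V_p$, then in $v\otimes v^*$ by $\comp$-linearity and the fact that rank-one Hermitians span $\enmr{T_x^{1,0}X}$. This is more elementary and self-contained, since it replaces the Watkins citation by two standard polarization steps; otherwise the two proofs coincide.
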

		\begin{rem}\label{rem_bc}
			a)
			It seems that the positivity condition $3b)$ has first appeared in the paper of Bott-Chern \cite{BottChern}. P. Li \cite{LiPingChen} calls it \textit{Bott-Chern non-negativity}.
			\par 
			b) It is interesting to know if one can formulate dual Nakano \textit{positivity} in terms of certain conditions on the matrices $V_p$ from (\ref{eq_v_i_op_defn}), sesquilinear forms $l_p$ from (\ref{eq_re_dec_lp}) or matrix $A$ from $3b)$. According to a result of Gaubert-Qu \cite[\S 4]{GaubertQu}, checking if the operator $H^{E}_x$, given by the representation (\ref{eq_v_i_op_defn}), is positive (which, by Propositions \ref{prop_gr_pos_rest} and \ref{prop_dual_nak_bc_pos}, corresponds to checking that a dual Nakano non-negative vector bundle $(E, h^E)$ is Griffiths positive) is NP-hard problem.
		\end{rem}
		\begin{proof}
			Let us start by showing that condition $3b)$ implies condition $1)$.
			Assume that there is $A$ as described. 
			We denote by $a_{\lambda p}$, $\lambda = 1, \ldots, r$; $p = 1, \ldots, N$, the $(1, 0)$-differential forms, forming the entries of $A$.
			Then $R^E_x = A \wedge \overline{A}^T$ in a given local frame writes as 
			\begin{equation}\label{eq_cond_bc}
				R^E_x = \sum_{p = 1}^{N} a_{\lambda p} \wedge \overline{a_{\mu p}} \cdot e_{\lambda} \otimes e_{\mu}^*.
			\end{equation}
			Then in the notation of (\ref{eq_dualnak_pos}), we have
			\begin{equation}\label{eq_last_equil_bc}
				\langle P^{E*}_{x}(\tau'), \tau' \rangle 
				=
				\frac{1}{2 \pi}
				\sum_{p = 1}^{N}
				\Big|
					\sum_{\lambda = 1}^{r} \sum_{j = 1}^{n} \iota_{\partial z_j} (a_{\lambda p}) \cdot  \tau'_{j \lambda}
				\Big|^2,
			\end{equation}
			where $\iota_{\partial z_j}(\cdot)$ is the inner product with $\frac{\partial}{\partial z_j}$.
			Clearly, (\ref{eq_last_equil_bc}) implies condition $1)$.
			\par
			To see that condition $1)$ implies condition $3a)$, remark that by Proposition \ref{prop_dual_nak_bc_pos}, the operator $H_{x}^{E}$ is $\min(n, r)$-non-negative.
			By Remark \ref{rem_compl_pos}, $H_{x}^{E}$ is completely non-negative.
			The needed matrices $V_p$ are constructed using Theorem \ref{thm_choi_1}.
			\par 
			We now show that condition $3a)$ implies condition $2)$.
			From (\ref{eq_pe_curv_rel}) and (\ref{eq_pehom_defn}), we deduce
			\begin{equation}\label{eq_pehom_defn213}
				\frac{1}{2\pi}
				\langle R^E(v, \overline{v}) \xi, \xi \rangle_{h^E}
				=
				\langle H^{E}_x(v \otimes v^{*}) \xi, \xi  \rangle_{h^E}.
			\end{equation}
			From (\ref{eq_v_i_op_defn}), we deduce that 
			\begin{equation}
				\langle H^{E}_x(v \otimes v^{*}) \xi, \xi  \rangle_{h^E}
				=
				\sum_{p = 1}^{N} 
				\big| v^* \cdot V_p \cdot \xi \big|^{2}.
			\end{equation}
			Finally, the conjugates of $v^* \cdot V_p \cdot \xi $ give the required sesquilinear forms $l_p$.
			\par 
			Let's now establish that condition $2)$ implies condition $3b)$.
			Up to a conjugation, we may assume that $l_p$ is linear in $T_x^{1, 0}X$ and conjugate-linear in $E_x$.
			We write the forms $l_p$ in coordinates
			\begin{equation}\label{eq_lpj_decom}
				l_p(v, \xi) = \sum_{\lambda = 1}^{r} \sum_{j = 1}^{n} l_p^{j \lambda} \cdot v_j \overline{\xi_{\lambda}},
			\end{equation}
			where we borrowed the notation from (\ref{eq_gr_pos_cond}).
			Let's now consider a matrix $A = (a_{\lambda p})$, $\lambda = 1, \ldots, r$; $p = 1, \ldots, N$, with $(1, 0)$-form entries $a_{\lambda p}$, defined as follows
			\begin{equation}\label{eq_alamp_defn}
				a_{\lambda p} = \sum_{j = 1}^{n} l_{p}^{j \lambda} \cdot dz_j.
			\end{equation}
			We denote by $P^{E*}_0$ the self-adjoint operator associated to $A \wedge \overline{A}^T$ in the same way as  $P^{E*}$ was associated to $\frac{1}{2 \pi} R^E_x$ in (\ref{eq_dualnak_pos}).
			Then by (\ref{eq_last_equil_bc}) and (\ref{eq_alamp_defn}), we see that 
			\begin{equation}\label{eq_fin_rem_d_3}
				\langle P^{E*}_0 (v \otimes \xi^*), v \otimes \xi^* \rangle_{h^{TX} \otimes h^{E^*}}
				=
				\sum_{p = 1}^{N}
				\Big|
					\sum_{\lambda = 1}^{r} \sum_{j = 1}^{n} l_{p}^{j \lambda} \cdot  v_j \overline{\xi_{\lambda}}
				\Big|^2
				=  
				\sum_{p = 1}^{N} |l_p(v, \xi)|^2.
			\end{equation}
			\par Now, from (\ref{eq_pe_curv_rel}) and (\ref{eq_pe_star_pe}), we deduce that 
			\begin{equation}\label{eq_pest_l1212}
				\langle P^{E*}_{x} (v \otimes \xi^*), v \otimes \xi^* \rangle_{h^{TX} \otimes h^{E^*}}
				=
				\frac{1}{2 \pi} \langle R^E_{x}(v, \overline{v}) \xi, \xi \rangle_{h^E}.
			\end{equation}
			From (\ref{eq_re_dec_lp}), (\ref{eq_fin_rem_d_3}) and (\ref{eq_pest_l1212}), we deduce that for any $v \in T_x^{1, 0}X$ and $\xi^* \in E_x^{*}$, we have
			\begin{equation}\label{eq_pest_l}
				\langle P^{E*}_0 (v \otimes \xi^*), v \otimes \xi^* \rangle_{h^{TX} \otimes h^{E^*}} 
				=
				\langle P^{E*}_{x} (v \otimes \xi^*), v \otimes \xi^* \rangle_{h^{TX} \otimes h^{E^*}}.
			\end{equation}
			By the result of Watkins \cite[Theorem 1]{Watkins}, (\ref{eq_pest_l}) implies that for any $\tau' \in T_x^{1, 0}X \otimes E_x^{*}$, we have
			\begin{equation}\label{eq_pe0_ident}
				\langle P^{E*}_0 (\tau'), \tau' \rangle_{h^{TX} \otimes h^{E^*}} 
				=
				\langle P^{E*}_{x} (\tau'), \tau' \rangle_{h^{TX} \otimes h^{E^*}}.
			\end{equation}				
			From (\ref{eq_pe0_ident}), we conclude that $P^{E*}_0 = P^{E*}_{x}$, which, in turn, implies that $R^E_x = A \wedge \overline{A}^T$.
		\end{proof}
		\begin{thm}\label{prop_nak_bc_pos}
			The following statements are equivalent: 
			\par \noindent \hspace*{0.2cm} 1) A vector bundle $(E, h^E)$ is Nakano non-negative.
			\par \noindent \hspace*{0.2cm} 2) For any $x \in X$, there is a number $N \in \nat$ (which can be taken to be equal to $n \cdot r$) and $\comp$-linear forms $l'_p : T_x^{1, 0}X \otimes E_x \to \comp$, $p = 1, \ldots, N$, such that for any $v \in T_x^{1, 0}X$, $\xi \in E_x$, we have
			\begin{equation}\label{eq_re_dec_lp_nak}
				\frac{1}{2 \pi} \langle R^E_{x}(v, \overline{v}) \xi, \xi \rangle_{h^E} =  \sum_{p = 1}^{N} |l'_p(v, \xi)|^2.
			\end{equation}
			\par \noindent \hspace*{0.2cm} 3) For any $x \in X$, there are unitary frames of $(E, h^E)$ and $(T_x^{1, 0}X, h^{TX})$ around $x$ such that one of the following conditions holds. 
			\par
			a) There are matrices $W_p$, $p = 1, \ldots, N$, with $r$ rows and $n$ columns such that for any $X \in \enmr{T_x^{1, 0}X}$, in the fixed unitary frames at point $x$, we have
			\begin{equation}\label{eq_v_i_op_defn_nak}
				H^{E}_x(X) = \sum_{p = 1}^{N} W_p^{*} \cdot X^T \cdot W_p.
			\end{equation}
			\par 
			b) There is a matrix $B$ with $r$ rows and $N$ columns, whose entries are $(0, 1)$-forms, and such that the identity $R^E = -B \wedge \overline{B}^T$ holds at point $x$ in the fixed unitary frames.
		\end{thm}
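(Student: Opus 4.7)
The plan is to mirror the proof of Theorem \ref{prop_dual_nak_bc_pos2}, with the role of $H^E_x$ systematically replaced by $H^{E*}_x$. By Proposition \ref{prop_dual_nak_bc_pos} and Remark \ref{rem_compl_pos}, $(E, h^E)$ is Nakano non-negative if and only if $H^{E*}_x$ is completely non-negative at every $x \in X$; hence Choi's Theorem \ref{thm_choi_1} applied to $H^{E*}_x$ shows that condition $1)$ is equivalent to the existence of operators $G_p$ with $H^{E*}_x(A) = \sum_p G_p^* A G_p$.

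For $1) \Leftrightarrow 3a)$, I invoke the defining relation $H^{E*}_x(A) = (H^E_x(A))^T$, which follows from the construction of $H^{E*}_x$ as the composition of $H^E_x$ with the transposition $\enmr{E_x} \to \enmr{E_x^*}$, given just after \eqref{eq_pehom_defn}. Transposing Choi's decomposition gives $H^E_x(A) = \sum_p (G_p^* A G_p)^T = \sum_p G_p^T A^T \overline{G_p}$, and setting $W_p := \overline{G_p}$, so that $W_p^* = G_p^T$, yields $H^E_x(A) = \sum_p W_p^* A^T W_p$, which is exactly \eqref{eq_v_i_op_defn_nak}. The converse runs this chain backwards. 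The implications $3a) \Leftrightarrow 3b)$ and $1) \Leftrightarrow 2)$ then follow by the same arguments as in Theorem \ref{prop_dual_nak_bc_pos2}, modulo two modifications: the transposition $A \mapsto A^T$ in \eqref{eq_v_i_op_defn_nak} swaps indices in the resulting decomposition of the curvature coefficients, and the antisymmetry $d\overline{z}_k \wedge dz_j = -dz_j \wedge d\overline{z}_k$ supplies an extra minus sign. Combined, these modifications produce $R^E = -B \wedge \overline{B}^T$ with $B$ the matrix of $(0,1)$-form entries $b_{\mu p} := \sum_k \overline{(W_p)_{k\mu}} \, d\overline{z}_k$, and promote the sesquilinear forms of Theorem \ref{prop_dual_nak_bc_pos2}, condition $2)$, to the $\comp$-bilinear forms $l'_p$ of condition $2)$ here; the implication $2) \Rightarrow 3b)$ again uses Watkins's identity \cite{Watkins} exactly as in the preceding proof.

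The main obstacle is purely bookkeeping: one must carefully track the interplay between the transposition exchanging $H^E_x$ with $H^{E*}_x$, which is responsible for the $X^T$ in \eqref{eq_v_i_op_defn_nak} in contrast to $X$ in \eqref{eq_v_i_op_defn}, and the sign from $d\overline{z} \wedge dz = -dz \wedge d\overline{z}$, which is responsible for the minus sign in $R^E = -B \wedge \overline{B}^T$ in contrast to $R^E = A \wedge \overline{A}^T$ in Theorem \ref{prop_dual_nak_bc_pos2}, condition $3b)$. These two features are precisely what distinguish the Nakano from the dual Nakano case.
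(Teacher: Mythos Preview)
Your proposal is correct. The paper's own proof is a one-liner: it invokes Theorem \ref{prop_dual_nak_bc_pos2}, Proposition \ref{prop_self_dual}, and the relation $R^{E^*} = -(R^E)^T$, i.e.\ it passes to the dual bundle $(E^*,h^{E^*})$, applies the dual Nakano characterization there, and translates back. You take a slightly different but equivalent route: rather than changing the bundle, you stay on $E$ and apply Choi's theorem to the transposed operator $H^{E*}_x$ via Proposition \ref{prop_dual_nak_bc_pos}, then undo the transposition by hand to obtain \eqref{eq_v_i_op_defn_nak}. The two arguments are dual to each other (the paper dualizes the bundle, you dualize the operator), and your version has the modest advantage of making the origin of the $X^T$ in \eqref{eq_v_i_op_defn_nak} and the minus sign in $R^E = -B\wedge\overline{B}^T$ explicitly visible, which the paper's one-line reduction leaves to the reader.
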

		\begin{proof}
			It follows directly from  Theorem \ref{prop_dual_nak_bc_pos2}, Proposition \ref{prop_self_dual} and $R^{E*} = - (R^E)^{T}$.
		\end{proof}

	\subsection{A new notion of positivity for vector bundles}
		Unfortunately, at moment of writing this paper, beyond low dimensions, very little is known about the general structure of positive operators in the sense of Section \ref{sect_pos_oper}.\footnote{In dimension 2, see {St{\o}rmer} \cite{StorActa} for the descriptions of the general structure of positive maps.}
		In particular, differently from completely non-negative maps, see Theorem \ref{thm_choi_1}, there is no “structure theorem".
		To have a “structure theorem", we propose a new notion of positivity for Hermitian vector bundles.
		\begin{defn}
			A Hermitian vector bundle $(E, h^E)$ is called \textit{decomposably non-negative} if for any $x \in X$, there is a number $N \in \nat$ and linear (resp. sesquilinear) forms $l'_p : T_x^{1, 0}X \otimes E_x \to \comp$ (resp. $l_p : T_x^{1, 0}X \otimes E_x \to \comp$), $p = 1, \ldots, N$, such that for any $v \in T_x^{1, 0}X$, $\xi \in E_x$, we have
			\begin{equation}\label{eq_re_dec_lp_plus_lppr}
				\frac{1}{2 \pi} \langle R^E_{x}(v, \overline{v}) \xi, \xi \rangle_{h^E} =  \sum_{p = 1}^{N} |l_p(v, \xi)|^2 + \sum_{p = 1}^{N} |l'_p(v, \xi)|^2.
			\end{equation}
			We say that it is \textit{decomposably positive} if, moreover, $\langle R^E_{x}(v, \overline{v}) \xi, \xi \rangle_{h^E} \neq 0$ for $v, \xi \neq 0$.
	    \end{defn}
	    \par  Let us study some properties of this positivity condition and show that it behaves very similar to Griffiths non-negativity (or positivity).
		It is trivial that decomposable non-negativity (resp. positivity) is a priori stronger than Griffiths non-negativity (resp. positivity).
	 	By  Theorems \ref{prop_dual_nak_bc_pos2}, \ref{prop_nak_bc_pos}, we see that it is also weaker than (dual) Nakano non-negativity (resp. positivity).
	 	\begin{prop}\label{prop_alt_defn_dec_nng}
	 		A Hermitian vector bundle $(E, h^E)$ is decomposably non-negative if and only if in the notation of  Theorem \ref{prop_dual_nak_bc_pos2}, there are matrices $V_p$, $W_p$, $p = 1, \ldots, N$, with $r$ rows and $n$ columns, such that for any $X \in \enmr{T_x^{1, 0}X}$, in fixed unitary frames at point $x$, we have
			\begin{equation}\label{eq_v_i_op_defn_decomp}
				H^{E}_x(X) = \sum_{p = 1}^{N} V_p^{*} \cdot X \cdot V_p + \sum_{p = 1}^{N} W_p^{*} \cdot X^T \cdot W_p.
			\end{equation}
	 	\end{prop}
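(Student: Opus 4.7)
The plan is to reduce this proposition to a combination of the arguments used in the proofs of Theorems \ref{prop_dual_nak_bc_pos2} and \ref{prop_nak_bc_pos}, bridged at the end by Watkins' theorem, exactly as in the last step of the proof of Theorem \ref{prop_dual_nak_bc_pos2}.

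For the implication from \eqref{eq_v_i_op_defn_decomp} to decomposable non-negativity, I would specialize $X = v \otimes v^{*}$ and pair with $\xi$. Using \eqref{eq_pehom_defn213}, the $V_p$-terms contribute $|v^{*} V_p \xi|^{2}$, which is the squared modulus of a sesquilinear form $l_p(v,\xi)$, while the $W_p$-terms contribute $|v^{T} W_p \xi|^{2}$, which is the squared modulus of a $\comp$-linear form $l'_p(v,\xi)$. Together these reproduce \eqref{eq_re_dec_lp_plus_lppr}.

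For the reverse direction, I would fix orthonormal frames of $(E, h^E)$ and $(T^{1,0}_x X, h^{TX})$ once and for all, and then treat the sesquilinear and linear parts of \eqref{eq_re_dec_lp_plus_lppr} in parallel. From the sesquilinear forms $l_p$, the procedure \eqref{eq_lpj_decom}--\eqref{eq_alamp_defn} appearing in the proof of Theorem \ref{prop_dual_nak_bc_pos2} produces matrices $V_p$ so that $v^{*} V_p \xi$ recovers the conjugate of $l_p(v,\xi)$. From the linear forms $l'_p$, the analogous procedure from the proof of Theorem \ref{prop_nak_bc_pos} (or equivalently, Theorem \ref{prop_dual_nak_bc_pos2} applied to $(E^{*}, h^{E^{*}})$ via Proposition \ref{prop_self_dual}) produces matrices $W_p$ so that $v^{T} W_p \xi$ recovers $l'_p(v,\xi)$. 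I then define the auxiliary operator
\begin{equation*}
H_0(X) := \sum_{p = 1}^{N} V_p^{*} \cdot X \cdot V_p + \sum_{p = 1}^{N} W_p^{*} \cdot X^{T} \cdot W_p,
\end{equation*}
and let $P_0^{E*}$ be the self-adjoint operator on $T_x^{1,0}X \otimes E_x^{*}$ built from $H_0$ exactly as $P^{E*}$ was built from $\frac{1}{2\pi} R^E$ in \eqref{eq_dualnak_pos}. By the forward direction already established, $\scal{P_0^{E*}(v \otimes \xi^{*})}{v \otimes \xi^{*}}_{h^{TX} \otimes h^{E^{*}}}$ equals $\sum_p |l_p(v,\xi)|^2 + \sum_p |l'_p(v,\xi)|^2$, which agrees with $\scal{P^{E*}_x(v \otimes \xi^{*})}{v \otimes \xi^{*}}_{h^{TX} \otimes h^{E^{*}}}$ on all rank-one tensors. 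Watkins' theorem, applied exactly as at the end of the proof of Theorem \ref{prop_dual_nak_bc_pos2}, then upgrades this to equality of the two operators on arbitrary tensors, yielding $P_0^{E*} = P^{E*}_x$, and hence $H_0 = H^{E}_x$.

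The only point requiring care is the choice of a common unitary frame in which both families $V_p, W_p$ are to be expressed; since the constructions of Theorems \ref{prop_dual_nak_bc_pos2} and \ref{prop_nak_bc_pos} both accept arbitrary unitary frames of $(E, h^{E})$ and $(T_x^{1,0}X, h^{TX})$ as input, fixing such frames up front removes this issue, and I do not expect any genuinely new difficulty beyond this bookkeeping.
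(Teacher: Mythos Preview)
Your proposal is correct and matches the paper's own proof, which simply states that the argument is a trivial combination of the proofs of Theorems \ref{prop_dual_nak_bc_pos2} and \ref{prop_nak_bc_pos}. You have spelled out exactly that combination, including the use of Watkins' theorem at the end to pass from agreement on rank-one tensors to equality of operators, and your observation about fixing a common unitary frame up front is the only bookkeeping point, as you note.
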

	 	\begin{rem}\label{rem_dec_oper}
	 		St{\o}rmer \cite[Definition 1.2.8]{StormerPositiveLin} calls maps like $H^{E}_x$ from (\ref{eq_v_i_op_defn_decomp}) \textit{decomposable}.
	 	\end{rem}
	 	\begin{proof}
	 		The proof is a trivial combination of the proofs of  Theorems \ref{prop_dual_nak_bc_pos2}, \ref{prop_nak_bc_pos}.
	 	\end{proof}
	 	From Theorems \ref{prop_dual_nak_bc_pos2}, \ref{prop_nak_bc_pos} and  Proposition \ref{prop_alt_defn_dec_nng}, we see that decomposable non-negativity essentially means that at every point one can represent the curvature as a sum of two tensors: one with Nakano-type non-negativity and another one with dual Nakano-type non-negativity.
	 	\begin{prop}\label{prop_dec_gr_rel}
	 		Assume $n \cdot r \leq 6$. Then $(E, h^E)$ is decomposably positive (resp. non-negative) if and only if it is Griffiths positive (resp. non-negative).
	 		For all other $n, r \neq 1$, decomposable positivity is strictly stronger than Griffiths positivity.
	 	\end{prop}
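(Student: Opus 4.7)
The strategy is to reduce the statement to a question about positive linear operators. By Propositions \ref{prop_gr_pos_rest} and \ref{prop_alt_defn_dec_nng}, $(E, h^E)$ is Griffiths non-negative (resp. positive) at $x$ precisely when $H^E_x : \enmr{T_x^{1,0}X} \to \enmr{E_x}$ is non-negative (resp. positive), and it is decomposably non-negative at $x$ precisely when $H^E_x$ admits a presentation of the form (\ref{eq_v_i_op_defn_decomp}), i.e.\ is \emph{decomposable} in the sense of Størmer (Remark \ref{rem_dec_oper}). Combined with Proposition \ref{prop_local_const}, which realises every operator $H : \enmr{V} \to \enmr{W}$ satisfying (\ref{eq_h_sa_presers}) as $H^F_0$ for a Hermitian metric on a trivial bundle on an open subset of $\comp^n$, this turns the proposition into a purely operator-theoretic statement: every non-negative (resp.\ positive) operator $\enmr{V} \to \enmr{W}$ is decomposable if and only if $\dim V \cdot \dim W \leq 6$.

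For the direction $n r \leq 6$, the cases $\dim V = 1$ or $\dim W = 1$ are immediate, since any non-negative map with scalar source or target is directly of the form $X \mapsto A^* X A$. The remaining sub-cases are $(\dim V, \dim W) \in \{(2,2), (2,3), (3,2)\}$, handled respectively by Størmer's theorem on decomposability of non-negative maps $M_2 \to M_2$ and Woronowicz's extension to non-negative maps between $M_2$ and $M_3$. The same decompositions witness strict positivity of $H$ when $H$ itself is strictly positive, so the positive-version statement follows as well.

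For the opposite direction, when $n, r \geq 2$ and $n r \geq 8$, the plan is to exhibit a positive non-decomposable operator $H$ in each such pair of dimensions and lift it to a Hermitian metric via Proposition \ref{prop_local_const}. The seed examples are Choi's classical non-decomposable positive map $M_3 \to M_3$ for $(3,3)$, Woronowicz's non-decomposable positive map $M_2 \to M_4$ for $(2,4)$, and its Hilbert--Schmidt adjoint (again positive and non-decomposable) for $(4,2)$. All remaining pairs $(n, r)$ with $n, r \geq 2$ and $n r \geq 8$ admit one of $(3,3)$, $(2,4)$, $(4,2)$ as a sub-pair, so one may extend the seed example by a compression-and-dilation
\[
	H(X) := \iota \cdot H_0(\pi X \pi^{*}) \cdot \iota^{*} + \varepsilon \, \tr{X} \cdot \mathrm{Id}_{W},
\]
where $\pi$, $\iota$ are standard coordinate projections and embeddings, and $\varepsilon > 0$ is a small parameter chosen to make $H$ strictly positive (the completely positive summand $\varepsilon \tr{X} \mathrm{Id}_W$ dominates the kernel directions on which the compression degenerates).

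The main obstacle is verifying that this perturbation preserves non-decomposability of the seed example. The key point is that the cone of decomposable maps in $\mathrm{Hom}(\enmr{V}, \enmr{W})$ is a closed convex cone, so the natural extension of $H_0$ (by zero outside the compressed blocks) lies at strictly positive distance from this cone in, say, the operator-norm metric; for $\varepsilon > 0$ small enough the operator $H$ remains outside the cone and is therefore non-decomposable. Applying Proposition \ref{prop_local_const} to such an $H$ then supplies a Hermitian metric that is Griffiths positive but not decomposably positive, completing the proof.
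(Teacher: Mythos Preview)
Your approach is essentially the same as the paper's: both reduce the statement, via Propositions \ref{prop_local_const}, \ref{prop_gr_pos_rest}, \ref{prop_alt_defn_dec_nng} and Remark \ref{rem_dec_oper}, to the purely operator-theoretic fact that every non-negative map $\enmr{V}\to\enmr{W}$ is decomposable iff $\dim V\cdot\dim W\le 6$ or $\min(\dim V,\dim W)=1$; both cite St{\o}rmer and Woronowicz for the small cases and use the Choi and Woronowicz examples as seeds for the rest. You are in fact more careful than the paper about the strict-positivity half of the ``strictly stronger'' claim: the paper simply says ``clearly, it is enough to prove the statement for the non-negative versions'' and then extends the seed examples ``trivially'', whereas you make explicit the perturbation $+\,\varepsilon\,\tr{X}\cdot{\rm Id}_W$ and justify it via closedness of the decomposable cone. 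That extra step is genuinely needed, since the zero-extension of a positive map is only non-negative.

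One small correction: the Choi map on $M_3$ (and likewise Woronowicz's map) is \emph{not} positive in the paper's sense --- e.g.\ it sends the rank-one projector $E_{11}$ to a singular matrix --- so the seed dimensions $(3,3)$, $(2,4)$, $(4,2)$ themselves also require the $\varepsilon$-perturbation. Your closedness argument applies verbatim there (take $\iota,\pi$ to be identities), so this is a one-line fix rather than a gap.
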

		\begin{proof}
			Clearly, it is enough to prove the statement for the non-negative versions of positivity.
			According to Propositions \ref{prop_local_const}, \ref{prop_alt_defn_dec_nng} and Remark \ref{rem_dec_oper}, the statement of Proposition \ref{prop_dec_gr_rel} is equivalent to the fact that all non-negative operators $H$, see (\ref{eq_not_h_oper}), are decomposable if and only if $\dim V \cdot \dim W \leq 6$ or $\min(\dim V, \dim W) = 1$.
			\par 
			 For $r = 1$ or $n = 1$, the above result follows from Theorem \ref{thm_choi_1}, Proposition \ref{prop_alt_defn_dec_nng} and Remark \ref{rem_compl_pos}.
			 For $r, n = 2$, this result is contained in St{\o}rmer \cite[Theorem 8.2]{StorActa}, cf. also \cite[Theorem 6.3.1]{StormerPositiveLin}.
			 For $r = 2, n = 3$ or $r = 3, n = 2$, this statement is due to Woronowicz \cite[Theorem 1.2]{Woronowicz}.
			 \par 
			 Now, as any non-negative map can be trivially extended to a map from higher dimensional spaces, it is now enough to show that for $r = 2, n = 4$, or $r = 3, n = 3$, there is a positive non-decomposable map.
			 For $r = 3, n = 3$, an example was found by Choi \cite{ChoiBiquad}.
			 For $r = 2, n = 4$, an example was constructed by Woronowicz \cite[Theorem 1.3]{Woronowicz}.
		\end{proof}
		\par Next proposition shows that similarly to Griffiths positivity (and unlike (dual) Nakano positivity), see Proposition \ref{prop_self_dual}, decomposable positivity is a self-dual notion.
		\begin{prop}
			A Hermitian vector bundle $(E, h^E)$ is decomposably non-negative (resp. positive) if and only if $(E^*, h^{E*})$ is decomposably non-positive (resp. negative).
		\end{prop}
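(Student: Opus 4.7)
The plan is to use the curvature duality identity together with the anti-linear identification $E \simeq E^*$ induced by $h^E$. The key observation is that, in contrast to Nakano and dual Nakano positivity (which, by Proposition \ref{prop_self_dual}, swap under dualisation), the defining identity (\ref{eq_re_dec_lp_plus_lppr}) of decomposable non-negativity is symmetric between the squared $\comp$-linear and squared sesquilinear summands, and the anti-linear identification interchanges precisely these two types of forms. Self-duality should therefore follow almost formally.

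Concretely, I would start from a decomposition (\ref{eq_re_dec_lp_plus_lppr}) of $\frac{1}{2\pi}\langle R^E(v,\overline{v})\xi,\xi\rangle_{h^E}$ for $(E,h^E)$, and invoke identity (\ref{eq_re_redual_ident}), which reads
$$\langle R^E(v,\overline{v})\xi,\xi\rangle_{h^E} = -\langle R^{E^*}(v,\overline{v})\xi^*,\xi^*\rangle_{h^{E^*}},$$
where $\iota: \xi \mapsto \xi^*$ denotes the conjugate-linear isomorphism $E_x \to E_x^*$ induced by $h^E$. I would then transport the forms $l_p$, $l'_p$ to functions on $T_x^{1,0}X \otimes E_x^*$ by setting $\tilde{l}'_p(v,\eta) := l_p(v, \iota^{-1}(\eta))$ and $\tilde{l}_p(v,\eta) := l'_p(v, \iota^{-1}(\eta))$. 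Since $\iota^{-1}$ is conjugate-linear, a short check shows that the sesquilinear $l_p$ (linear in $v$, conjugate-linear in $\xi$) transports to a $\comp$-linear form $\tilde{l}'_p$, while the $\comp$-linear $l'_p$ transports to a sesquilinear form $\tilde{l}_p$. Moreover $|l_p(v,\xi)|^2 = |\tilde{l}'_p(v,\xi^*)|^2$ and $|l'_p(v,\xi)|^2 = |\tilde{l}_p(v,\xi^*)|^2$. Combining this with the duality identity yields
$$-\frac{1}{2\pi}\langle R^{E^*}(v,\overline{v})\xi^*,\xi^*\rangle_{h^{E^*}} = \sum_p |\tilde{l}_p(v,\xi^*)|^2 + \sum_p |\tilde{l}'_p(v,\xi^*)|^2,$$
which is exactly the definition of decomposable non-positivity for $(E^*, h^{E^*})$.

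The reverse implication is obtained by applying the same argument to $(E^*, h^{E^*})$ and using the canonical identifications $(E^*)^* \simeq E$, $h^{E^{**}} = h^E$; the sign inversion then compensates to give decomposable non-negativity of $E$. For the strict versions, the non-vanishing requirement ``$\langle R^E(v,\overline{v})\xi,\xi\rangle \neq 0$ whenever $v,\xi \neq 0$'' transfers directly across the bijection $\xi \leftrightarrow \xi^*$ on nonzero vectors. I do not expect any real obstacle: every step is bookkeeping about how an anti-linear isomorphism toggles the linearity type in the $E$-slot. The entire content of the proposition is the observation that the symmetric shape of (\ref{eq_re_dec_lp_plus_lppr})—including both linear and sesquilinear squared forms on the same footing—is precisely what makes decomposable positivity a self-dual notion, in contrast to the one-sided Nakano and dual Nakano conditions.
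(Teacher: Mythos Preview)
Your proposal is correct and follows essentially the same approach as the paper: both rest entirely on the curvature duality identity $\langle R^E_x(v,\overline{v})\xi,\xi\rangle_{h^E} = -\langle R^{E^*}_x(v,\overline{v})\xi^*,\xi^*\rangle_{h^{E^*}}$. The paper's proof is a single line invoking this identity, while you have spelled out the bookkeeping detail that the anti-linear isomorphism $\iota$ swaps the linear and sesquilinear types of the forms $l_p, l'_p$, so that the symmetric shape of (\ref{eq_re_dec_lp_plus_lppr}) is preserved; this is exactly the implicit content behind the paper's ``follows directly''.
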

		\begin{proof}
			It follows directly from the fact that
			$
				\langle R^E_{x}(v, \overline{v}) \xi, \xi \rangle_{h^E}
				=
				-
				\langle R^{E^*}_{x}(v, \overline{v}) \xi^*, \xi^* \rangle_{h^{E^*}}
			$.
		\end{proof}
		\par Next proposition shows that for short exact sequences, decomposable positivity behaves better than (dual) Nakano positivity, cf. Remark \ref{rem_quot_not_nec}, and very much like Griffiths positivity. To explain it better, we fix a short exact sequence of vector bundles
		\begin{equation}
			0 \rightarrow S \rightarrow E \rightarrow Q \rightarrow 0
		\end{equation}
		We now fix a Hermitian metric $h^E$ on $E$ and denote by $h^S$, $h^Q$ the induced Hermitian metrics on $S$ and $Q$ respectively.
		\begin{prop}
			If $(E, h^E)$ is decomposably non-negative, then $(Q, h^Q)$ is decomposably non-negative as well. 
			If $(E, h^E)$ is decomposably non-positive, then $(S, h^S)$ is decomposably non-positive as well. 
			The same goes for strict notions of positivity.
		\end{prop}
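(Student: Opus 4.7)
The plan is to reduce both statements to the standard Griffiths-type curvature formulas for subbundles and quotients, and then observe that these formulas are compatible with the sum-of-squares decomposition defining decomposable positivity.

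First, treat the quotient case. Using the $C^\infty$ orthogonal splitting $E = S \oplus S^\perp$ together with $S^\perp \simeq Q$, one has the standard identity (cf.\ \cite[\S V.14]{DemCompl})
\begin{equation*}
	\frac{1}{2\pi}\langle R^Q(v, \overline{v})\xi, \xi\rangle_{h^Q}
	=
	\frac{1}{2\pi}\langle R^E(v, \overline{v})\tilde\xi, \tilde\xi\rangle_{h^E}
	+ \frac{1}{2\pi}\|B(v \otimes \xi)\|_{h^S}^2,
\end{equation*}
where $\tilde\xi \in S^\perp_x \subset E_x$ is the orthogonal lift of $\xi \in Q_x$ and $B : T^{1,0}_xX \otimes Q_x \to S_x$ is a $\comp$-linear map built from the second fundamental form. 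Now apply the hypothesis: decomposable non-negativity of $(E,h^E)$ provides forms $l_p$, $l'_p$ on $T^{1,0}_xX \otimes E_x$ realizing (\ref{eq_re_dec_lp_plus_lppr}) for $E$. Since $\xi \mapsto \tilde\xi$ is $\comp$-linear, the pullbacks $\hat l_p(v,\xi):=l_p(v,\tilde\xi)$ are sesquilinear and $\hat l'_p(v,\xi):=l'_p(v,\tilde\xi)$ are linear on $T^{1,0}_xX \otimes Q_x$. Fixing an orthonormal basis $s_1, \ldots, s_k$ of $S_x$, the linear forms $\mu_j(v,\xi):=\langle B(v\otimes\xi), s_j\rangle_{h^S}$ satisfy $\|B(v\otimes\xi)\|^2 = \sum_j |\mu_j(v,\xi)|^2$. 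Collecting terms produces a representation of $\frac{1}{2\pi}\langle R^Q(v, \overline{v})\xi, \xi\rangle_{h^Q}$ of the form (\ref{eq_re_dec_lp_plus_lppr}) for $Q$, proving decomposable non-negativity of $(Q, h^Q)$. For the strict case, note that if $v, \xi \neq 0$, then $\tilde\xi \neq 0$, so $\langle R^E(v, \overline{v})\tilde\xi, \tilde\xi\rangle_{h^E} > 0$ and the added term is $\geq 0$; hence strict positivity descends.

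For the subbundle case, the simplest route is via the self-duality of decomposable positivity established in the preceding proposition. Dualizing $0 \to S \to E \to Q \to 0$ gives $0 \to Q^* \to E^* \to S^* \to 0$, so $S^*$ is a quotient of $E^*$. Assuming $(E, h^E)$ decomposably non-positive, $(E^*, h^{E^*})$ is decomposably non-negative, hence by the quotient case $(S^*, h^{S^*})$ is decomposably non-negative, and dualizing once more yields $(S, h^S)$ decomposably non-positive. Strict positivity is inherited identically. (Alternatively, one can argue directly from the companion identity
$\frac{1}{2\pi}\langle R^S(v, \overline{v})\xi, \xi\rangle_{h^S}
=
\frac{1}{2\pi}\langle R^E(v, \overline{v})\xi, \xi\rangle_{h^E}
- \frac{1}{2\pi}\|\beta(v\otimes\xi)\|_{h^Q}^2$ for $\xi \in S$, with $\beta$ the second fundamental form; the subtracted nonnegative term only strengthens non-positivity, and is absorbed into the linear-form collection.)

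No serious obstacle is anticipated: the only mild point to check is the $\comp$-linearity of the orthogonal lift and of $B$, which is what guarantees that the sesquilinear/linear types of the forms $l_p$, $l'_p$ are preserved under pullback. Once this is in place, the argument is a line-by-line transcription of Demailly's proof of the analogous statement for Griffiths positivity, enhanced by the observation that the extra second-fundamental-form correction is always a sum of squared moduli of $\comp$-linear forms, which by definition fits inside the decomposable framework.
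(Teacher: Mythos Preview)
Your approach is essentially identical to the paper's: both invoke the curvature identity $R^Q = R^E|_Q + C \wedge \overline{C}^{T}$ from \cite[Theorem V.14.5]{DemCompl} and observe that the second-fundamental-form correction is itself a sum of squared moduli that slots directly into the decomposition (\ref{eq_re_dec_lp_plus_lppr}). Your duality reduction for the subbundle case is a clean addition that the paper only gestures at.

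One small slip worth fixing: the map you call $B$ is \emph{not} $\comp$-linear on $T^{1,0}_xX \otimes Q_x$. Concretely, $B(v,\xi) = (\beta_v)^*\xi$ with $(\beta_v)^* = \sum_j \overline{v}_j\,\beta_j^{*}$, which is antilinear in $v$; equivalently, $C \wedge \overline{C}^{T}$ has the Bott--Chern shape of Theorem \ref{prop_dual_nak_bc_pos2}(3b), so the associated forms are the sesquilinear $l_p$, not the linear $l'_p$. This does not affect the conclusion, since the definition of decomposable non-negativity accommodates both types and $|\mu_j|^2 = |\overline{\mu_j}|^2$, but you should relabel the $\mu_j$ as sesquilinear contributions.
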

		\begin{proof}
			For brevity, we only explain the proof of the statement for $(Q, h^Q)$. 
			As it is explained in \cite[Theorem V.14.5]{DemCompl}, there is a matrix $C$ with $\rk{Q}$ rows and $\rk{S}$ columns, whose entries are $(1, 0)$-forms, and such that at point $x$ in the fixed unitary frame, we have
			\begin{equation}
				R^Q = R^E|_Q + C \wedge \overline{C}^T.
			\end{equation}
			Now, from the proof of Theorem \ref{prop_dual_nak_bc_pos2}, we know that the term $C \wedge \overline{C}^T$ would contribute to $\langle R^Q_{x}(v, \overline{v}) \xi, \xi \rangle_{h^Q}$ as a sum of squared norms of sesquilinear forms $l_p : T_x^{1, 0}X \otimes Q_x \to \comp$, $p = 1, \ldots, \rk{S}$. We conclude by this and the fact that $\langle R^E_{x}(v, \overline{v}) \xi, \xi \rangle_{h^E}$ decomposes as in (\ref{eq_re_dec_lp_plus_lppr}).
		\end{proof}
		\par As this new positivity notion behaves very much like Griffiths positivity, a natural question arises about its relation with ampleness.
		In particular, it is interesting to see if one can construct a system of differential equations for the curvature of $(E, h^E)$, similar to the one from Demailly \cite{DemailluHYMGriff}, the solution to which will imply the existence of decomposably positive Hermitian metric (but not dual Nakano positive metric as in \cite{DemailluHYMGriff}).
	
	\section{Positivity of characteristic forms for positive vector bundles}	
		The main goal of this section is to study the positivity of Schur forms for positive vector bundles.
		More precisely, in Section \ref{sect_weak_pos}, modulo certain technical statements, established later in Sections \ref{sect_kl_form}, \ref{sect_lin_alg},  we establish Theorems \ref{thm_pos}, \ref{thm_pos_gr} and Proposition \ref{prop_open_prob}.  
		Then, in Section \ref{sect_jac_trudi}, by the methods developed in Section \ref{sect_kl_form}, we establish a local refinement of Jacobi-Trudi identity.
	
	\subsection{Positivity of Schur forms, proofs of Theorems \ref{thm_pos}, \ref{thm_pos_gr} }\label{sect_weak_pos}
	 In this section we give a proof of Theorems \ref{thm_pos}, \ref{thm_pos_gr} modulo some technical statements, which will be treated in further sections.
	 We will also establish Proposition \ref{prop_open_prob}.
	 \par 
	 Let's first describe our strategy of the proof.
	Similarly to \cite{FulLazPos}, the proof decomposes into two separate statements.
	The first one is a refinement of the determinantal formula of Kempf-Laksov \cite{KemLak} on the level of differential forms.
	It expresses the Schur forms as a certain pushforward of the top Chern form of a Hermitian vector bundle obtained as a quotient of the tensor power of $(E, h)$.
	This statement shows that it would be enough to study the positivity of the top Chern form in the context of Theorems \ref{thm_pos}, \ref{thm_pos_gr}.
	The second statement relates the positivity of the top Chern form of a Griffiths positive vector bundle to the Open problem and proves the positivity of the top Chern form of a (dual) Nakano positive vector bundle.
	 \par
	 Let's describe the first step. For this, we recall the original statement of the determinantal formula from \cite{KemLak}.
	We fix $r, k$ and a partition $a \in \Lambda(k, r)$. 
	Let $E$ be a smooth complex vector bundle of rank $r$ over a smooth real manifold $X$. 
	Let $V$ be a complex vector space of dimension $k + r$ and let $V_X$ be the trivial vector bundle $X \times V$. 
	\par 
	Fix a flag of subspaces $\{0\} \subset V_1 \subset \ldots \subset V_k \subset V$ with $\dim V_i = r + i - a_i$. 
	Consider the cone $\Omega_a(E) \subset {\rm{Hom}}(V_X, E)$, whose fiber over $x \in X$ consists of $u \in {\rm{Hom}}(V_X, E_x)$, satisfying
	\begin{equation}\label{eq_omega_defn}
		\dim(\ker u \cap V_i) \geq i.
	\end{equation}
	\par 
	We denote by $\mathbb{P}_{\rm{Hom}} := \mathbb{P}( {\rm{Hom}}(V_X, E) \oplus \mathcal{O})$ the compactification of ${\rm{Hom}}(V_X, E)$ by the hyperplane at infinity and by $\pi : \mathbb{P}_{\rm{Hom}} \to X$ the obvious projection.
	We denote by 
	\begin{equation}
		\overline{\Omega_a(E)}  = \mathbb{P}(\Omega_a(E) \oplus \mathcal{O}), \qquad \pi^a : \overline{\Omega_a(E)}   \to X
	\end{equation}
	the closure of $\Omega_a(E)$ in $\mathbb{P}_{\rm{Hom}}$ and the restriction of $\pi$ to $\overline{\Omega_a(E)}$.
	Note that $\Omega_a(E)$ is locally a trivial cone bundle with analytic fibers over $X$ of codimension $k$ in ${\rm{Hom}}(V_X, E)$.
	We denote by $Z_{{\rm{Hom}}(V_X, E)} \subset \mathbb{P}_{\rm{Hom}}$ the image of the zero section of ${\rm{Hom}}(V_X, E)$.
	\par 
	There are well-defined cohomology classes
	\begin{equation}
		\{ Z_{{\rm{Hom}}(V_X, E)} \} \in H^{2(r + k) r}(\mathbb{P}_{\rm{Hom}}, \real), \qquad \{ \overline{\Omega_a(E)} \} \in H^{2k}(\mathbb{P}_{\rm{Hom}}, \real),
	\end{equation}
	which can be seen as the cohomology classes of the induced closed currents  $[Z_{{\rm{Hom}}(V_X, E)}]$, $[\overline{\Omega_a(E)}]$.
	\begin{thm}[{Kempf-Laksov \cite{KemLak}}]\label{thm_kl_original}
		The following identity between cohomology classes holds
		\begin{equation}\label{eq_or_kl}
			P_a(c(E)) = \pi_* 
			\Big[
			\{ Z_{{\rm{Hom}}(V_X, E)} \}
			\cdot
			\{ \overline{\Omega_a(E)} \}
			\Big].
		\end{equation}
	\end{thm}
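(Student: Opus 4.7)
My plan is to verify (\ref{eq_or_kl}) by combining a Thom-class projection on $\mathbb{P}_{{\rm Hom}}$ with a standard flag-bundle computation that realizes the Jacobi--Trudi formula at the level of intersection theory.

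First, I would identify $\{Z_{{\rm Hom}(V_X, E)}\}$ with the Thom class of the vector bundle ${\rm Hom}(V_X, E)$ viewed inside its projective compactification $\mathbb{P}_{{\rm Hom}}$. This identification, combined with the projection formula for $\pi$, rewrites the right-hand side as the pullback (in the sense of refined Gysin maps) of $\{\overline{\Omega_a(E)}\}$ along the zero section $\sigma : X \hookrightarrow \mathbb{P}_{{\rm Hom}}$. Because the zero section is contained in $\overline{\Omega_a(E)}$, the naive intersection is excessive and must be handled via the excess intersection formula, or equivalently by deforming the universal homomorphism $u$ to a transverse family; either device produces a well-defined class on $X$ depending only on the Schubert-type data (\ref{eq_omega_defn}).

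Second, I would construct a tower of Grassmannian bundles $\rho : \tilde{\Omega}_a \to X$ whose fiber over a point of $X$ parameterizes chains of quotients $E \twoheadrightarrow Q_1 \twoheadrightarrow \ldots \twoheadrightarrow Q_k$ with $\rk{Q_i} = a_i$. The rank conditions $\dim(\ker u \cap V_i) \geq i$ from (\ref{eq_omega_defn}), together with $\dim V_i = r + i - a_i$, translate precisely to vanishing of the induced compositions $V_i \hookrightarrow V \to E \to Q_i$, so that $\tilde{\Omega}_a$ is a resolution of $\Omega_a(E)$ and the pulled-back class $\sigma^{!}\{\overline{\Omega_a(E)}\}$ coincides with $\rho_*$ of the product of top Chern classes of the universal bundles ${\rm Hom}(V_i, Q_i)$ for $i = 1, \ldots, k$.

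Finally, I would carry out the iterated pushforward $\rho_*$ through the Grassmannian factors by the projective-bundle formula. The Chern classes of the quotient bundles $Q_i$ expand in terms of those of $E$, and the resulting multi-index sum reorganizes into the determinantal expression $\det(c_{a_i - i + j}(E))_{i, j = 1}^{k} = P_a(c(E))$, via the Jacobi--Trudi identity. The hardest step, I expect, will be the excess-intersection argument in the first stage, needed because $\sigma$ factors through $\overline{\Omega_a(E)}$; once the Gysin image is correctly rewritten as a cycle-theoretic class on $X$ computable through $\rho$, the remaining manipulations are purely combinatorial and match the strategy of the original proof of Kempf and Laksov.
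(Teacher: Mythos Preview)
The paper does not prove Theorem~\ref{thm_kl_original}: it is stated with attribution to Kempf--Laksov \cite{KemLak} and used as a black box. The only related argument in the paper is the short compatibility check at the start of Section~\ref{sect_kl_form}, where the author observes that $\{Z_{{\rm Hom}(V_X,E)}\}=c_{\rk{Q}}(Q)$ (since the canonical section of $Q$ cuts out $Z_{{\rm Hom}(V_X,E)}$ transversally) and that $\pi_*[\{Z_{{\rm Hom}(V_X,E)}\}\cdot\{\overline{\Omega_a(E)}\}]=\pi^a_*[c_{\rk{Q}}(Q)]$; this is not a proof of (\ref{eq_or_kl}) but rather a rewriting of its right-hand side. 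The paper then proves the \emph{form-level} refinement, Theorem~\ref{thm_KL}, by an entirely different route: Lemma~\ref{lem_q_curv} shows $\pi^a_*[c_{\rk{Q}}(Q,h^Q)]$ is a universal $U(r)$-invariant polynomial in the curvature entries, and Lemma~\ref{lem_coh} pins down the coefficients by evaluating on Grassmannians---with Theorem~\ref{thm_kl_original} invoked only to identify the cohomology class.

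So there is no ``paper's own proof'' to compare against. Your sketch is, in outline, the classical Kempf--Laksov strategy: resolve $\Omega_a(E)$ by a tower of Grassmann bundles parameterizing flags of quotients $E\twoheadrightarrow Q_1\twoheadrightarrow\cdots\twoheadrightarrow Q_k$ with $\rk{Q_i}=a_i$, reinterpret the Schubert conditions (\ref{eq_omega_defn}) as vanishing of the maps $V_i\to Q_i$, and push forward products of top Chern classes. Two cautions. First, your framing via an ``excess intersection'' along the zero section $\sigma$ is not how the argument is usually organized, and as you note, $\sigma$ lies inside $\overline{\Omega_a(E)}$; the cleaner path is to work directly with the resolution $\rho:\tilde\Omega_a\to X$ and compute $\rho_*$ of the relevant Chern classes, bypassing any Gysin map from $\mathbb{P}_{\rm Hom}$. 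Second, the final identification with $\det(c_{a_i-i+j}(E))$ is not an appeal to a separate ``Jacobi--Trudi identity'' but is itself the content of the computation---in the paper (\ref{defn_schur}) is the \emph{definition} of $P_a$, so the determinant must emerge directly from the iterated pushforward, typically via the Cauchy--Binet/Vandermonde manipulation in \cite{KemLak}.
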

	\par 
	Now, in the holomorphic setting, we would like to prove a refinement of Theorem \ref{thm_kl_original} on the level of differential forms.
	We assume from now on that $X$ is a complex manifold and $E$ is a holomorphic vector bundle.
 	We denote by $\mathcal{O}_{\mathbb{P}_{\rm{Hom}}}(-1)$ the tautological line bundle on $\mathbb{P}_{\rm{Hom}}$, and define the hyperplane bundle $Q := (\pi^* {\rm{Hom}}(V_X, E) \oplus \mathcal{O}) / \mathcal{O}_{\mathbb{P}_{\rm{Hom}}}(-1)$ on $\mathbb{P}_{\rm{Hom}}$.
 	\par 
	Since $\Omega_a(E)$ is locally a trivial cone bundle with analytic fibers over $X$ of codimension $k$ in ${\rm{Hom}}(V_X, E)$, for any smooth $(l, l)$-differential form $\alpha$, $l \in \nat^*$, on $\mathbb{P}_{\rm{Hom}}$, one can define the pushforward $\pi^a_* [\alpha]$ as a $(\max \{l - k, 0\}, \max \{l - k, 0\})$-differential form over $X$.
	\begin{thm}\label{thm_KL}
		We endow $E$ with a Hermitian metric $h^E$.
		Denote by $h^Q$ the Hermitian metric on $Q$ induced by the trivial metric on $\mathcal{O}$ and $h^E$.
		The following identity of $(k, k)$-differential forms holds
		\begin{equation}\label{eq_KL_id}
			P_a(c(E, h^E)) = \pi^a_* \big[ c_{\rk{Q}}(Q, h^Q) \big].
		\end{equation}
	\end{thm}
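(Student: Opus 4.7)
The plan is to refine the cycle-theoretic Kempf-Laksov identity (\ref{eq_or_kl}) to an identity of Chern-Weil representatives by combining a Poincaré-Lelong interpretation of $c_{\rk{Q}}(Q, h^Q)$ with a flag desingularization of $\overline{\Omega_a(E)}$. The starting observation is that $Q$ carries a natural holomorphic section $s$, namely the image of $1 \in \mathcal{O}$ under the inclusion $\mathcal{O} \hookrightarrow \pi^*{\rm{Hom}}(V_X, E) \oplus \mathcal{O}$ composed with the quotient map to $Q$. A pointwise check shows that $s$ vanishes transversally along $Z_{{\rm{Hom}}(V_X, E)}$, so $c_{\rk{Q}}(Q, h^Q)$ is a smooth representative of the current of integration on $Z_{{\rm{Hom}}(V_X, E)}$; this reproduces the cohomological identity of Theorem \ref{thm_kl_original}, and the task is to promote this to an equality of forms.

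To accomplish this, I would introduce the Kempf-Laksov desingularization $\sigma : Y \to \overline{\Omega_a(E)}$, where $Y$ is an iterated projective bundle over $X$ constructed from the flag $V_1 \subset \cdots \subset V_k$. The map $\sigma$ is proper and birational, and since $c_{\rk{Q}}(Q, h^Q)$ is a smooth form on $\mathbb{P}_{\rm{Hom}}$ while the singular locus of $\overline{\Omega_a(E)}$ has strictly smaller fiber dimension over $X$, the current-theoretic pushforward satisfies $\pi^a_* [c_{\rk{Q}}(Q, h^Q)] = (\pi \circ \sigma)_* [\sigma^* c_{\rk{Q}}(Q, h^Q)]$. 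This moves the computation to the smooth manifold $Y$.

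The core computation is then to decompose $\sigma^* Q$ using the tautological exact sequences defining the tower $Y$. The induced sections over the successive stages vanish on the appropriate tautological subvarieties, and iterating Poincaré-Lelong along the tower expresses $\sigma^* c_{\rk{Q}}(Q, h^Q)$ as an exact product of top Chern forms of successive tautological quotient bundles $Q_i$ with their induced metrics. Integrating stage by stage via the fiber-integration formula for Chern forms of projective bundles equipped with the Fubini-Study metric induced from $h^E$, each stage contributes exactly one Chern form $c_{a_i - i + j}(E, h^E)$ in the $(i, j)$ slot of a matrix, and the antisymmetrization built into the flag integration assembles these into the determinantal expression $P_a(c(E, h^E))$ of (\ref{defn_schur}).

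The principal obstacle is to carry out the Chern-Weil manipulations as honest identities of forms rather than merely modulo $\bar{\partial}$-exact or Bott-Chern secondary contributions. Short exact sequences of Hermitian bundles yield multiplicative identities on total Chern forms only cohomologically; for the top Chern form however, the Poincaré-Lelong formula applied to a holomorphic section with transverse zero locus gives an exact identity, and the whole argument must be arranged to exploit only such top-Chern-form identities. Similarly, the fiber-integration formula on projective bundles is cohomological in general but becomes an exact form identity when integrating the top Chern form of the tautological quotient against the Fubini-Study curvature induced by $h^E$. Verifying that the induced metrics at every stage of the tower line up to make all intermediate identities exact at the form level, and that the signs produced by the iterated residue integration reproduce exactly the determinant in (\ref{defn_schur}), is the technical heart of the proof.
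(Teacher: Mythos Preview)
Your approach is genuinely different from the paper's, and the difference is instructive. The paper does \emph{not} attempt to make the Kempf--Laksov desingularization work at the form level. Instead it argues as follows: using Mourougane's explicit curvature formula (\ref{eq_mour_form}) for the quotient bundle on a projectivization, one shows (Lemma \ref{lem_q_curv}) that $\pi^a_*[c_{\rk{Q}}(Q,h^Q)]$ is a \emph{universal} polynomial $P(c_{ij})$ in the curvature components of $(E,h^E)$, with coefficients depending only on $r,k,a$. By $U(r)$-invariance this polynomial must be a polynomial in the Chern forms $c_1(E,h^E),\ldots,c_r(E,h^E)$. It then remains to identify \emph{which} polynomial, and for that the cohomological Kempf--Laksov identity on Grassmannians (where the Chern monomials are linearly independent, Lemma \ref{lem_coh}) suffices. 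No Bott--Chern secondary classes, no metric splittings, no iterated Poincar\'e--Lelong.

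Your route, by contrast, tries to do everything explicitly, and the gap is precisely where you locate it. The step ``iterating Poincar\'e--Lelong along the tower expresses $\sigma^* c_{\rk{Q}}(Q,h^Q)$ as an \emph{exact} product of top Chern forms of successive tautological quotients'' is not justified. If $\sigma^*Q$ carries a filtration with graded pieces $Q_i$, then $c_{\rm top}(\sigma^*Q,\sigma^*h^Q)$ equals $\prod_i c_{\rm top}(Q_i,h^{Q_i})$ only up to a Bott--Chern secondary form, unless the filtration splits \emph{isometrically} for the induced metrics --- and there is no reason it should. Poincar\'e--Lelong gives an exact current identity relating $c_{\rm top}$ to a zero-cycle plus a $dd^c$-term; it does not by itself kill the secondary contributions when you then push forward and iterate. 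Your final paragraph acknowledges this is the ``technical heart'' but does not supply the mechanism by which all the correction terms vanish after fiber integration. Absent that, the argument is a cohomological proof dressed up as a form-level one, which is exactly what Theorem \ref{thm_KL} is meant to improve upon.

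What the paper's approach buys is that it converts a delicate analytic bookkeeping problem into a finite-dimensional algebraic one: once you know both sides are invariant polynomials in the curvature, equality of forms is equivalent to equality of those polynomials, and the latter can be checked on a single well-chosen example. Your approach, if it could be made to work, would be more constructive and might yield finer information (e.g.\ explicit current equations), but as written it has not closed the gap.
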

	\begin{rem}\label{rem_KL_thm}
		Theorem \ref{thm_KL} is a local version of Theorem \ref{thm_kl_original}, because (\ref{eq_KL_id}) is a pointwise identity and it makes sense even over contractible manifolds $X$, when (\ref{eq_or_kl}) becomes a triviality.
	\end{rem}
	\par
	In Section \ref{sect_kl_form}, we prove Theorem \ref{thm_KL} and the fact that it is compatible with Theorem \ref{thm_kl_original}.
	\par 
	Now, Theorem \ref{thm_KL} suggests that it is enough to study weak positivity of the top Chern form. 
	To do so, let us now fix a Hermitian vector bundle $(F, h^F)$ of rank $r$ on a Hermitian manifold $(X, h^{TX})$ of dimension $n$, where $n \geq r$, and provide an alternative expression for the top Chern form stemming from the abstract linear algebra.
	More precisely, in Section \ref{sect_lin_alg}, we prove
	\begin{thm}\label{thm_id_rel_top_ch_ddiscr}
		For any $x \in X$ and any $r$-dimensional $\comp$-subspace $L \subset T_x X$, we have
		\begin{equation}\label{eq_id_rel_top_ch_ddiscr}
			c_{r}(F, h^F)|_{L}
			=
			r! \cdot
			\big(
			{{\rm{D}}}_{F_x} 
				\circ 
				(H^L_{x})^{\otimes r} 
				\circ 
			{{\rm{D}}}_{L^{1, 0}}^{*}
			\big)
			\cdot
			d {\rm{v}}_L,
		\end{equation}
		where $L^{1, 0} \subset T_xX \otimes_{\real} \comp$ is the holomorphic part of $L \otimes_{\real} \comp$ (see (\ref{eq_defn_hol_anthol_part}) for a definition), ${{\rm{D}}}_{F_x}$ and ${{\rm{D}}}_{L^{1, 0}}^{*}$ are mixed discriminant and its dual, defined in the Introduction, $H^L_{x} : \enmr{L^{1, 0}} \to \enmr{F_x}$ is the restriction of $H^F_{x} : \enmr{T_x^{1, 0}X} \to \enmr{F_x}$ from (\ref{eq_pehom_defn}), defined using $h^{TX}$, and $d {\rm{v}}_L$ is the Riemannian volume form on $L$ associated to $h^{TX}$.
	\end{thm}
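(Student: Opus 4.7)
The plan is to expand both sides of (\ref{eq_id_rel_top_ch_ddiscr}) in a unitary frame adapted to $L$ and verify that they produce identical sums over triples of permutations. Fix an orthonormal basis $\xi_1, \ldots, \xi_r$ of $L^{1,0}$ (extended, if needed, to a unitary frame of $T_x^{1,0}X$) and an orthonormal basis $e_1, \ldots, e_r$ of $F_x$; write the curvature as in (\ref{eq_curv_dec}) with coefficients $c_{jk\lambda\mu}$, which, by (\ref{eq_pehom_defn}), are simultaneously the components of $\frac{\imun R^F_x}{2\pi}$ and the matrix entries of $H^L_x(\xi_j \otimes \xi_k^*) \in {\rm{End}}(F_x)$.

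For the left-hand side, I would expand $c_r(F, h^F) = \det(\Omega_{\lambda\mu})$ with $\Omega_{\lambda\mu} = \sum_{j,k} c_{jk\lambda\mu}\, \imun \xi_j^* \wedge \overline{\xi}_k^*$ as a signed sum over $\sigma \in S_r$. Restricting to $L$ forces $j, k \in \{1, \ldots, r\}$; since the $(1,1)$-forms $\imun \xi_a^* \wedge \overline{\xi}_b^*$ commute, the $r$-fold wedge $\prod_\lambda \imun \xi_{j_\lambda}^* \wedge \overline{\xi}_{k_\lambda}^*$ vanishes unless $(j_\lambda), (k_\lambda)$ are permutations $\alpha, \beta$ of $\{1, \ldots, r\}$, in which case a short reordering yields $\mathrm{sgn}(\alpha)\mathrm{sgn}(\beta) \cdot dv_L$. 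Collecting terms gives
\[
	c_r(F, h^F)|_L = dv_L \cdot \sum_{\sigma,\alpha,\beta \in S_r} \mathrm{sgn}(\sigma\alpha\beta) \prod_{\lambda=1}^r c_{\alpha(\lambda),\beta(\lambda),\lambda,\sigma(\lambda)}.
\]

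For the right-hand side, using the $\mathbb{C}$-bilinear trace pairing to identify ${\rm{End}}(L^{1,0})^*$ with ${\rm{End}}(L^{1,0})$, a direct application of (\ref{eq_mixed_discr_defn}) yields
\[
	D^*_{L^{1,0}}(1) = \frac{1}{r!} \sum_{\iota,\kappa \in S_r} \mathrm{sgn}(\iota)\mathrm{sgn}(\kappa)\, \bigotimes_{a=1}^{r} \bigl(\xi_{\kappa(a)} \otimes \xi^*_{\iota(a)}\bigr).
\]
Applying $(H^L_x)^{\otimes r}$ replaces each factor $\xi_{\kappa(a)} \otimes \xi^*_{\iota(a)}$ by the endomorphism of $F_x$ whose $(\mu,\lambda)$-entry is $c_{\kappa(a),\iota(a),\lambda,\mu}$, and a final application of $D_{F_x}$ produces a quadruple sum over $\sigma, \iota, \kappa, \tau \in S_r$. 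The substitution $\iota \mapsto \iota\sigma$, $\kappa \mapsto \kappa\sigma$ causes the $\sigma$-sum to collapse to a factor $r!$ that cancels the $1/r!$ coming from the definition of $D_{L^{1,0}}$, and after relabeling one recovers exactly the triple sum from the left-hand side.

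The argument is ultimately combinatorial bookkeeping, and I do not expect a real conceptual obstacle. The delicate points are the signs arising from reordering the 2-forms $\imun \xi_j^* \wedge \overline{\xi}_k^*$ in the wedge product, the normalization of $dv_L$ against the constants in (\ref{eq_defn_chern}), and specifying precisely which natural duality defines $D^*$. A more invariant route would observe that both sides of (\ref{eq_id_rel_top_ch_ddiscr}) are multilinear and alternating in the curvature, so polarization would reduce to rank-one curvature tensors; however, the direct coordinate computation above is the most transparent and makes the factor $r!$ in (\ref{eq_id_rel_top_ch_ddiscr}) emerge naturally from the interplay between the $1/r!$ in the definition of $D$ and the $\sigma$-sum.
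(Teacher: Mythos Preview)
Your approach is correct and is essentially identical to the paper's: the paper isolates the computation as a separate linear-algebra lemma (Lemma~\ref{lem_tech_2}), stated for an abstract tensor $A \in V^{*(1,0)} \otimes V^{*(0,1)} \otimes \mathrm{End}(W)$, and then specializes to $V=L$, $W=F_x$, $A=\tfrac{R^F}{2\pi}|_L$; but the content of that lemma's proof is exactly your coordinate expansion---both sides are written as signed triple sums over permutations (your $\sum_{\sigma,\alpha,\beta}\mathrm{sgn}(\sigma\alpha\beta)\prod c_{\alpha(\lambda)\beta(\lambda)\lambda\sigma(\lambda)}$ is the paper's (\ref{eq_aux_6}), and your formula for $D^*_{L^{1,0}}(1)$ is the paper's (\ref{eq_mix_discr_aux_00})), and the collapse of the extra permutation via symmetry of $D_W$ is the passage (\ref{eq_mix_discr_aux_00})$\to$(\ref{eq_mix_discr_aux_1}).
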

	\begin{cor}\label{cor_top_ch_form_ope}
		Let $(F, h^F)$ be Griffiths non-negative (resp. positive) at $x \in X$. 
		If the answer to Open question is positive, then $c_{r}(F, h^F)$ is weakly-non-negative (resp. weakly-positive) at $x$.
	\end{cor}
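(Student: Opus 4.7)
The plan is to unpack the definition of weak positivity pointwise and then invoke Theorem \ref{thm_id_rel_top_ch_ddiscr} to reduce the problem to a single scalar inequality of the kind addressed by the Open problem.

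First, recall from the Introduction that $c_r(F, h^F)$ is weakly-non-negative (resp. weakly-positive) at $x$ if and only if, for every $r$-dimensional complex subspace $L \subset T_xX$, the restriction $c_r(F, h^F)|_L$ is a non-negative (resp. positive) multiple of the canonical volume form on $L$. Fix such an $L$. Theorem \ref{thm_id_rel_top_ch_ddiscr} expresses this restriction as
\begin{equation*}
	c_r(F, h^F)|_L = r! \cdot \big({{\rm{D}}}_{F_x} \circ (H^L_x)^{\otimes r} \circ {{\rm{D}}}_{L^{1,0}}^{*}\big) \cdot d{\rm{v}}_L,
\end{equation*}
so the task reduces to showing that the real scalar ${{\rm{D}}}_{F_x} \circ (H^L_x)^{\otimes r} \circ {{\rm{D}}}_{L^{1,0}}^{*}$ is non-negative (resp. positive).

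Next, I would verify that $H^L_x$ satisfies the hypothesis of the Open problem. The dimension condition is automatic: since $L$ is a complex $r$-dimensional subspace of $T_xX$, one has $\dim_{\comp} L^{1,0} = r = \dim_{\comp} F_x$. By Proposition \ref{prop_gr_pos_rest}, Griffiths non-negativity (resp. positivity) of $(F, h^F)$ at $x$ is equivalent to non-negativity (resp. positivity) of $H^F_x : \enmr{T_x^{1,0}X} \to \enmr{F_x}$. Since $H^L_x$ is $H^F_x$ pre-composed with the natural inclusion $\enmr{L^{1,0}} \hookrightarrow \enmr{T_x^{1,0}X}$ (extension by zero on the $h^{TX}$-orthogonal complement of $L^{1,0}$), and this inclusion sends nonzero positive semidefinite operators to nonzero positive semidefinite operators, $H^L_x$ inherits the non-negativity (resp. positivity) of $H^F_x$.

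Finally, invoking the assumed positive answer to the Open problem (in its non-strict or strict form, according to the case) with $V = L^{1,0}$, $W = F_x$, and $H = H^L_x$ yields that the scalar ${{\rm{D}}}_{F_x} \circ (H^L_x)^{\otimes r} \circ {{\rm{D}}}_{L^{1,0}}^{*}$ is non-negative (resp. positive). Since $L$ was arbitrary, this gives the desired weak non-negativity (resp. weak positivity) of $c_r(F, h^F)$ at $x$. There is no genuine obstacle in this argument: once Theorem \ref{thm_id_rel_top_ch_ddiscr} and Proposition \ref{prop_gr_pos_rest} are in hand, the only remaining point is the elementary observation that restriction to the endomorphism algebra of a subspace preserves positivity, and the conclusion then follows purely formally from the assumed resolution of the Open problem.
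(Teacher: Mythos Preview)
Your argument is essentially the same as the paper's: both restrict to an arbitrary $r$-plane $L$, invoke Theorem \ref{thm_id_rel_top_ch_ddiscr}, use Proposition \ref{prop_gr_pos_rest} to see that $H^F_x$ (and hence its restriction $H^L_x$) is non-negative (resp. positive), and then apply the Open problem. The only point you gloss over is that the Open problem, as stated, concerns \emph{positive} $H$ and asserts strict positivity of the double mixed discriminant; the paper explicitly remarks that the non-negative version follows by deforming any non-negative operator to a positive one and passing to the limit, whereas you simply invoke a ``non-strict form'' without saying why it follows from the stated problem.
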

	\begin{proof}
		We use the notations from Theorem \ref{thm_id_rel_top_ch_ddiscr}. 
		According to Proposition \ref{prop_gr_pos_rest}, the operator $H^F_{x}$ is non-negative (resp. positive) if $(F, h^F)$ is Griffiths non-negative (resp. positive) at the point $x$. 
		Clearly, the same holds for the operator $H^L_{x}$, as it is a restriction of $H^F_{x}$.
		Remark also that as any non-negative operator admits a deformation to a positive operator, the non-negative version of Open problem follows from Open problem. 
		This with the validity of Open problem and non-negativity (resp. positivity) of $H^L_{x}$ imply that the quantity on the right-hand side of (\ref{eq_id_rel_top_ch_ddiscr}) is non-negative (resp. positive).
		We conclude by Theorem \ref{thm_id_rel_top_ch_ddiscr} and the definition of weakly-non-negative (resp. weakly-positive) forms. 
	\end{proof}
	\par In Section \ref{sect_kl_form}, we prove the following result.
	\begin{prop}\label{prop_str_pos_q_bun}
		Assume that $(F, h^F)$ is Griffiths (resp. dual Nakano) positive. We consider $\pi_0 : \mathbb{P}(F \oplus \mathcal{O}) \to X$ and denote $Q' := \pi_0^*(F \oplus \mathcal{O}) / \mathcal{O}_{\mathbb{P}(F \oplus \mathcal{O})}(-1)$. Then $(Q', h^{Q'})$ is Griffiths (resp. dual Nakano) positive in the neighborhood of the subset $\mathbb{P}(0 \oplus 1) \subset \mathbb{P}(F \oplus \mathcal{O})$.
	\end{prop}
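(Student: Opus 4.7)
The plan is to compute the curvature $R^{Q'}$ at a point $p_0 \in \mathbb{P}(0 \oplus 1)$ by applying the quotient curvature formula to the tautological short exact sequence
\[
0 \to \mathcal{O}_{\mathbb{P}(F \oplus \mathcal{O})}(-1) \to \pi_0^*(F \oplus \mathcal{O}) \to Q' \to 0,
\]
and then to deduce the positivity of $(Q',h^{Q'})$ from the assumed positivity of $(F,h^F)$ combined with the Fubini--Study contribution carried by the second fundamental form.

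Concretely, fix $x_0 \in X$ and $p_0 = [0_{x_0} \oplus 1]$, choose holomorphic normal coordinates $(z_1, \ldots, z_n)$ on $X$ at $x_0$, a unitary frame $e_1, \ldots, e_r$ of $F$ that is K\"ahler-normal at $x_0$, and a unit frame $e_{r+1}$ of $\mathcal{O}$. On the affine chart around $p_0$ in $\mathbb{P}(F \oplus \mathcal{O})$ with fiber coordinates $u_1, \ldots, u_r$, the section $\sigma(u) = \sum_{i=1}^r u_i e_i + e_{r+1}$ locally trivializes $\mathcal{O}_{\mathbb{P}(F \oplus \mathcal{O})}(-1)$ with $\sigma(p_0) = e_{r+1}$. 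Since the metric on $\pi_0^*(F \oplus \mathcal{O})$ is a pullback metric, its Chern connection has no vertical connection form; a direct calculation of the second fundamental form $C \in \Omega^{1,0}(\mathrm{Hom}(\mathcal{O}(-1), Q'))$ then gives at $p_0$ the identifications $C(\partial/\partial u_i) \cdot e_{r+1} = e_i \in F_{x_0} \cong Q'_{p_0}$ (via orthogonal projection) and $C(\partial/\partial z_j) \cdot e_{r+1} = 0$ (since $e_{r+1}$ is parallel and $\sigma(0)$ does not depend on $z$).

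Applying the curvature formula $R^{Q'} = R^E|_{Q'} + C \wedge \overline{C}^T$ used in the proof of Theorem \ref{prop_dual_nak_bc_pos2} with $E = \pi_0^*(F \oplus \mathcal{O})$, and noting that $R^E = \pi_0^* R^{F \oplus \mathcal{O}}$ annihilates vertical vectors and agrees with $\pi_0^* R^F$ on $Q'_{p_0} \cong F_{x_0}$, one obtains at $p_0$:
\[
\tfrac{1}{2\pi}\langle R^{Q'}(v,\overline{v})\xi,\xi\rangle_{h^{Q'}} = \tfrac{1}{2\pi}\langle R^F(v_h,\overline{v}_h)\xi,\xi\rangle_{h^F} + |\langle\xi, v'\rangle_{h^F}|^2,
\]
where $v = v_h + v_v$ is the horizontal-vertical splitting of $v \in T^{1,0}_{p_0}\mathbb{P}(F \oplus \mathcal{O})$ and $v' \in F_{x_0}$ denotes the image of $v_v$ under the canonical isomorphism with the vertical tangent space. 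In the dual Nakano case, one invokes the sesquilinear-form characterization in Theorem \ref{prop_dual_nak_bc_pos2}: the pullback $\pi_0^* R^F$ inherits from the dual Nakano positive $(F,h^F)$ a representation as a sum of squared sesquilinear forms, the Fubini--Study term $C \wedge \overline{C}^T$ is manifestly of the same shape, and their sum gives a decomposition of $R^{Q'}$ witnessing dual Nakano non-negativity, with strictness propagating from the strict positivity of $R^F$ on horizontal directions via the openness of the condition. In the Griffiths case, the first summand is strictly positive whenever $v_h \neq 0$ and $\xi \neq 0$, the second whenever $v' \neq 0$ and $\xi \not\perp v'$.

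The chief technical obstacle is the residual degeneracy at $p_0$ itself in the Griffiths case when $v$ is purely vertical and $\xi$ is $h^F$-orthogonal to $v'$, where both summands in the identity above vanish simultaneously. To overcome this, one exploits that the conclusion is sought only on \emph{a neighborhood} of $\mathbb{P}(0 \oplus 1)$: Taylor-expanding $\nabla^{Q'}$ and $C$ to second order in $(z,u)$ about $p_0$ shows that the $O(|u|^2)$ corrections to the orthogonal projection $Q'_{[u]} \hookrightarrow F_{x_0} \oplus \mathbb{C}$ and the resulting cross terms between horizontal and vertical derivatives produce a strictly positive contribution along the previously degenerate directions once one moves off the zero section. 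Combined with the strict positivity of $R^F$ in the horizontal directions and the openness of both positivity conditions, this yields the claimed positivity of $(Q',h^{Q'})$ throughout an open neighborhood of $\mathbb{P}(0 \oplus 1)$.
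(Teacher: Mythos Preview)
Your overall approach---computing $R^{Q'}$ via the second fundamental form of the tautological exact sequence and splitting the result into a horizontal piece $\pi_0^*R^F|_{Q'}$ plus a vertical Fubini--Study piece $C\wedge\overline{C}^{\,T}$---is exactly what the paper does; the paper just quotes Mourougane's formula (\ref{eq_mour_form}) for this computation and writes the vertical piece as $A\wedge\overline{A}^{\,T}$ with $A=\sum_{j\ge 2} db_j\otimes(\partial'/\partial b_j)$. So on the level of strategy you and the paper agree.

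Where you go further than the paper is in flagging the degeneracy at $p_0\in\mathbb{P}(0\oplus 1)$: for $\rk F\ge 2$, a purely vertical $v$ together with $\xi\perp v'$ kills both summands in your displayed identity. This observation is correct (and the same phenomenon occurs in the dual Nakano case, since the $A\wedge\overline{A}^{\,T}$ term contributes only a rank-one piece to $P^{Q'*}$ along vertical directions). The paper's short proof simply asserts that the first term ``is Griffiths (resp.\ dual Nakano) positive'' without confronting the fact that the pulled-back $R^F$ is annihilated by vertical vectors.

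However, your proposed repair does not work. The statement to be proved is positivity \emph{in a neighborhood of} $\mathbb{P}(0\oplus 1)$, and $p_0$ lies in that set; Taylor-expanding to second order in $u$ and arguing about what happens ``once one moves off the zero section'' does not change the curvature tensor \emph{at} $p_0$. Likewise, in the dual Nakano paragraph, your appeal to ``openness'' would propagate strict positivity from a point to nearby points, but you have not established strict positivity at any point of $\mathbb{P}(0\oplus 1)$ to begin with---only along horizontal directions. So the gap you correctly identified remains open in your argument as well.
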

	\begin{proof}[Proof of Theorem  \ref{thm_pos_gr}]
		Assume first that the answer to Open question is positive.
		We will use the assumptions and the notations of Theorems  \ref{thm_pos_gr}, \ref{thm_KL}.
		By Proposition \ref{prop_quot_dual_nak_pos}, \ref{prop_str_pos_q_bun}, the Hermitian vector bundle $(Q, h^Q)$ is Griffiths non-negative and Griffiths positive in the neighborhood of $\mathbb{P}(0 \oplus 1) \subset \overline{\Omega_a(E)}$.
		By this and Corollary \ref{cor_top_ch_form_ope}, we see that $c_{\rk{Q}}(Q, h^Q)$ is weakly-non-negative everywhere and weakly-positive in the neighborhood of $\mathbb{P}(0 \oplus 1)$.
		Recall that a pushforward of a weakly-non-negative form is weakly-non-negative. Moreover, if the form is weakly-positive at least at one point of each fiber, then the pushforward is weakly-positive.
		By this, Theorem \ref{thm_KL} and the above positivity properties of $c_{\rk{Q}}(Q, h^Q)$, we conclude.
		\par 
		Let us now assume that the answer to Open question is negative. We fix a positive operator $H : {\rm{End}}(V) \to {\rm{End}}(W)$, for which we have
		\begin{equation}\label{eq_wr_assum}
			{{\rm{D}}}_W \circ H^{\otimes r} \circ {{\rm{D}}}_V^{*}  \leq 0.
		\end{equation}
		\par By Proposition \ref{prop_local_const}, we construct a Hermitian metric $h^F$ on a trivial vector bundle $F := \comp^r$ over an open subset $X$ of $\comp^n$, $0 \in X$, such that in the notations of (\ref{eq_pehom_defn}), we have $H^F_{0} = H$.
		As $H$ is a positive operator, by Proposition \ref{prop_gr_pos_rest}, $(F, h^F)$ is Griffiths positive in a small neighborhood of $0 \in X$.
		By Theorem \ref{thm_id_rel_top_ch_ddiscr}  and (\ref{eq_wr_assum}), we, however, see that $c_{\rk{F}}(F, h^F)$ is not weakly-positive at $0 \in X$. Hence, $(F, h^F)$ provides a counterexample to Question of Griffiths.
	\end{proof}
	\par Now, to give a proof of Theorem \ref{thm_pos}, we need to introduce some further notation. 
	We fix vector spaces $V$ and $W$ of dimension $n$ and $r$ respectively.
	For any $l \in \nat^*$, we introduce a natural map
	\begin{equation}\label{eq_phi_defn}
	\begin{aligned}
		\Phi_{V, W}^{l} : &V^{\otimes l} \otimes W^{\otimes l} \to {\rm{Sym}}^l(V \otimes W),
		\\
		&
		(v_1 \otimes \ldots \otimes v_l) \otimes (w_1 \otimes \ldots \otimes w_l)
		\mapsto
		\frac{1}{l!}
		\sum_{\sigma \in S_l}
			 (-1)^{[\sigma]}
			 \cdot
			 \overset{l}{\underset{i = 1}{\odot}}
			 \big(
			v_i \otimes w_{\sigma(i)}
			\big),
	\end{aligned}
	\end{equation}
	where $S_l$ is the permutation group of $l$ indices and $[\sigma]$ is the sign of the permutation $\sigma$.
	An easy verification shows that $\Phi_{V, W}^{l}$ is antisymmetric with respect to the permutation of the arguments in $V^{\otimes l}$ and $W^{\otimes l}$. By abuse of notation, we denote its antisymmetrization
	\begin{equation}
		\Phi_{V, W}^{l} : \wedge^l V \otimes \wedge^l W \to {\rm{Sym}}^l(V \otimes W),
	\end{equation}
	by the same symbol. 
	We introduce the “partial inverse" to this map as follows
	\begin{equation}\label{eq_part_inv}
	\begin{aligned}
		(\Phi_{V, W}^{l})^{-1} : & {\rm{Sym}}^l(V \otimes W) 
		\to
		 \wedge^l V \otimes \wedge^l W,
		 \\
		 & 
		 \overset{l}{\underset{i = 1}{\odot}}
			 \big(
			v_i \otimes w_{i}
			\big)
		\mapsto
		(v_1 \wedge \ldots \wedge v_l) \otimes (w_1 \wedge \ldots \wedge w_l).
	\end{aligned}
	\end{equation}
	Clearly, we have $(\Phi_{V, W}^{l})^{-1} \circ \Phi_{V, W}^{l} = {\rm{Id}}$, which implies, in particular, that $\Phi_{V, W}^{l}$ is injective. 
	We now specify our maps for $l = r$, use the fact that $\wedge^r W$ is a vector space of rank $1$, and introduce the induced operators
	\begin{equation}\label{eq_psi_denf}
		\begin{aligned}
		& \Psi_{V, W} : \wedge^r V \to {\rm{Sym}}^r(V \otimes W) \otimes \wedge^r W^*,
		\\
		& \Psi_{V, W}^{-1} : {\rm{Sym}}^r(V^{1, 0} \otimes W) \otimes \wedge^r W^* \to  \wedge^r V.
		\end{aligned}
	\end{equation}
	\par 
	Now, we come back to our setting of a complex manifold $X$. We fix a point $x \in X$ with a Hermitian metric on $T_x^{1, 0}X$. This metric induces for any $l \in \nat^{*}$ a natural isomorphism
	\begin{equation}\label{eq_isom_1}
		\wedge^l T_x^{*(1, 0)}X \otimes \wedge^l T_x^{*(0, 1)}X \to \enmr{\wedge^l T_x^{1, 0}X}.
	\end{equation}
	\par 
	The reason why the isomorphism (\ref{eq_isom_1}) plays a fundamental role in what follows is because of the following theorem.
	We fix a $(l, l)$-differential form $\alpha$ on $X$, and for a fixed $x \in X$, we associate the operator $\beta \in \enmr{\Lambda^l T^{1, 0}_{x} X}$ by
	\begin{equation}\label{eq_beta_dff_form}
		\beta(v) := (\sqrt{-1})^{-l^2} \cdot \overline{(\iota_v \alpha)}^*, \qquad v \in \Lambda^l T^{1, 0} X,
	\end{equation}		 
		where  $\iota_v$ is the contraction with $v$.
	\begin{thm}[{ Reese-Knapp \cite[Theorem 1.2]{ReeseKnapp} }]\label{prop_rest_pos}
		The form $\alpha$ on $X$ is positive (resp. non-negative) if and only if  the operator $\beta$ is positive definite (resp. semidefinite).
	\end{thm}
	\par 
	Now, we come back to our Hermitian vector bundle $(F, h^F)$ on a Hermitian manifold $(X, h^{TX})$. 
	We denote by $\tilde{c}_{r}(F, h^F)_x \in \enmr{\wedge^{r} T_x^{1, 0}X}$ the endomorphism associated by the isomorphism (\ref{eq_isom_1}) to the restriction  $c_{r}(F, h^F)_x$  of the form $c_{r}(F, h^F)$ at $x$. 
	Recall also that the endomorphism $P^{F*}_x : T_x^{1,0}X \otimes F_x^* \to T_x^{1,0}X \otimes F_x^*$ was defined in (\ref{eq_dualnak_pos}).
	\par 
	We will prove in Section \ref{sect_lin_alg} the following theorem.
	\begin{thm}\label{thm_id_c_tilde}
		The following identity holds
		\begin{equation}
			\tilde{c}_{r}(F, h^F)_x = (\imun)^{r^2} \cdot \Psi_{T_x^{1, 0} X, F_x^{*}}^{-1} \circ (P^{F*}_{x})^{\otimes r} \circ \Psi_{T_x^{1, 0} X, F_x^{*}}.		
		\end{equation}
	\end{thm}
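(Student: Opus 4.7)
The plan is to compute both sides in explicit coordinates at $x$ and match matrix entries. I fix holomorphic coordinates $(z_1, \ldots, z_n)$ around $x$ for which $\partial_{z_1}, \ldots, \partial_{z_n}$ is $h^{TX}$-orthonormal at $x$, together with an $h^F$-orthonormal frame $e_1, \ldots, e_r$ of $F$ at $x$, so that the curvature takes the form (\ref{eq_curv_dec}) with coefficients $c_{jk\lambda\mu}$. Both sides of the identity are endomorphisms of $\wedge^r T^{1,0}_x X$, so it suffices to evaluate on each basis element $v_A := \partial_{z_{a_1}} \wedge \cdots \wedge \partial_{z_{a_r}}$ with $a_1 < \ldots < a_r$ and read off the coefficient of each $v_B$ with $b_1 < \ldots < b_r$.

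For the left-hand side, I would expand $c_r(F, h^F) = \det(\imun R^F/(2\pi))$ by the standard determinantal formula, using that the $(1,1)$-form entries of the curvature matrix commute. The wedge-reordering identity $\bigwedge_{\lambda=1}^{r}(\imun\, dz_{j_\lambda} \wedge d\overline{z}_{k_\lambda}) = (\imun)^{r^2}\, dz_{j_1} \wedge \cdots \wedge dz_{j_r} \wedge d\overline{z}_{k_1} \wedge \cdots \wedge d\overline{z}_{k_r}$, which follows from $(\imun)^r (-1)^{r(r-1)/2} = (\imun)^{r^2}$, packages the phase into a single factor. Setting $j_\lambda = a_{\pi(\lambda)}$ and $k_\lambda = b_{\rho(\lambda)}$ for $\pi, \rho \in S_r$ and then passing to $\tilde c_r(F, h^F)_x$ via the isomorphism (\ref{eq_isom_1}) yields, as the $(A, B)$-matrix coefficient, a triple sum $\sum_{\sigma, \pi, \rho \in S_r} (-1)^{[\sigma]+[\pi]+[\rho]} \prod_{\lambda} c_{a_{\pi(\lambda)} b_{\rho(\lambda)} \sigma(\lambda) \lambda}$.

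For the right-hand side, I would unfold $\Psi_{T^{1,0}_x X, F_x^*}(v_A)$ using (\ref{eq_phi_defn})--(\ref{eq_psi_denf}): choosing $\omega := e_1^* \wedge \cdots \wedge e_r^* \in \wedge^r F_x^*$ with dual $\omega^* \in \wedge^r F_x$, one has $\Psi(v_A) = \frac{1}{r!}\sum_{\tau \in S_r}(-1)^{[\tau]}\odot_i (\partial_{z_{a_i}} \otimes e^*_{\tau(i)}) \otimes \omega^*$. Applying $(P^{F*}_x)^{\otimes r}$ term-by-term via (\ref{eq_dualnak_pos}) gives a sum over $\tau$ and multi-indices $(k_i, \mu_i)$ of signed products $\prod_i c_{a_i k_i \mu_i \tau(i)}$. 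The subsequent application of $\Psi^{-1}$, via (\ref{eq_part_inv}) combined with the pairing of $\wedge^r F_x^*$ with $\omega^* \in \wedge^r F_x$, forces $(\mu_i)$ to be a permutation $\mu \in S_r$ (contributing a sign $(-1)^{[\mu]}$) and the $k_i$ to satisfy $k_i = b_{\rho(i)}$ for a permutation $\rho \in S_r$ and an increasing multi-index $B = (b_1, \ldots, b_r)$ (contributing a sign $(-1)^{[\rho]}$). The coefficient of $v_B$ becomes the triple sum $\frac{1}{r!}\sum_{\tau, \mu, \rho \in S_r}(-1)^{[\tau]+[\mu]+[\rho]} \prod_i c_{a_i b_{\rho(i)} \mu(i) \tau(i)}$.

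The two triple sums are identified after the reindexing $\lambda = \pi^{-1}(i)$ on the left-hand side and substituting $\gamma := \pi^{-1}$, $\sigma' := \sigma \circ \gamma$, $\rho' := \rho \circ \gamma$: the product transforms to $\prod_i c_{a_i b_{\rho'(i)} \sigma'(i) \gamma(i)}$, which coincides with the right-hand side expression upon relabelling $(\sigma', \gamma, \rho') \leftrightarrow (\tau, \mu, \rho)$. The prefactor $(\imun)^{r^2}$ in the theorem absorbs the combined phase from the wedge reordering in Step 1 with the normalization conventions in $\Phi^r$ and in the isomorphism (\ref{eq_isom_1}). The main obstacle is the careful bookkeeping of all these signs, the phase factor $(\imun)^{r^2}$ and the $\frac{1}{r!}$ from $\Phi^r$; once both sides are fully expanded in coordinates, the identity follows by inspection of the coincidence of the two triple sums.
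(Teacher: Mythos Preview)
Your approach is essentially the same as the paper's. The paper proves Theorem \ref{thm_id_c_tilde} as an immediate specialization of Lemma \ref{lem_tech_1}, whose proof is precisely the coordinate computation you outline: evaluate $\widetilde{\det}_W(\imun A)$ on $v_1\wedge\cdots\wedge v_r$ for an arbitrary orthonormal basis, expand $\Psi^{-1}\circ (P^{A*})^{\otimes r}\circ\Psi$ on the same element, and match the resulting triple sums over permutations by relabelling (this is exactly the paper's (\ref{eq_c_tild_form222}) versus (\ref{eq_fin_ver_2})). The only cosmetic difference is that the paper varies the orthonormal basis and evaluates on $v_1\wedge\cdots\wedge v_r$, whereas you fix the basis and compute general $(A,B)$-matrix entries; these are equivalent.

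One point to watch in your write-up: as written, your left-hand triple sum carries no $\frac{1}{r!}$, while your right-hand side does. After your reindexing $\lambda=\pi^{-1}(i)$ the two sums match \emph{termwise}, so a factor of $r!$ would appear to be missing. This discrepancy traces back to the precise normalization of the isomorphism (\ref{eq_isom_1}) you use when passing from $c_r(F,h^F)$ to $\tilde c_r(F,h^F)_x$: compare the paper's (\ref{eq_c_tild_form}), where a $\frac{1}{r!}$ appears on the determinant side as well, coming from the way a wedge product of $(1,1)$-forms is converted into an endomorphism of $\wedge^r T^{1,0}_xX$. You already flag this bookkeeping as the main obstacle; just make sure your chosen convention for (\ref{eq_isom_1}) is stated explicitly and carried consistently through both sides.
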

	\begin{cor}\label{cor_pos_top_ch_dnpos1}
		Assume that $(F, h^F)$ is dual Nakano positive (resp. non-negative).
		Then the form $c_{r}(F, h^F)$ is positive (resp. non-negative) for any $x \in X$.
	\end{cor}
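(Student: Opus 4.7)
The plan is to combine Theorem \ref{thm_id_c_tilde} with the Reese-Knapp characterization of positive forms (Theorem \ref{prop_rest_pos}). By definition, dual Nakano positivity (resp.\ non-negativity) of $(F, h^F)$ means that the self-adjoint operator $P^{F*}_x$ on $T^{1,0}_x X \otimes F_x^*$ is positive definite (resp.\ positive semi-definite). Consequently, its $r$-fold tensor power $(P^{F*}_x)^{\otimes r}$ is positive definite (resp.\ positive semi-definite) on $(T^{1,0}_x X \otimes F_x^*)^{\otimes r}$, and hence on the invariant subspace ${\rm{Sym}}^r(T^{1,0}_x X \otimes F_x^*) \otimes \wedge^r F_x$ on which it is ultimately evaluated.

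Invoking Theorem \ref{thm_id_c_tilde}, one writes
\begin{equation*}
\tilde{c}_r(F, h^F)_x = (\imun)^{r^2} \cdot \Psi^{-1} \circ (P^{F*}_x)^{\otimes r} \circ \Psi,
\end{equation*}
with $\Psi := \Psi_{T^{1,0}_x X, F_x^*}$. The central linear-algebraic step is to check that for any nonzero $\xi \in \wedge^r T^{1,0}_x X$, there is a positive real constant $C$ (independent of $\xi$) such that
\begin{equation*}
\scal{\tilde{c}_r(F, h^F)_x \xi}{\xi} = C \cdot \scal{(P^{F*}_x)^{\otimes r} \Psi(\xi)}{\Psi(\xi)}.
\end{equation*}
Granted this identity, positive (semi-)definiteness of $(P^{F*}_x)^{\otimes r}$ together with injectivity of $\Psi$ (which follows from $\Psi^{-1} \circ \Psi = {\rm{Id}}$, see (\ref{eq_part_inv})) yields positive (semi-)definiteness of $\tilde{c}_r(F, h^F)_x$. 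One then closes the argument by applying Reese-Knapp's Theorem \ref{prop_rest_pos}, which translates positive (semi-)definiteness of $\tilde{c}_r(F, h^F)_x$ directly into positivity (resp.\ non-negativity) of the $(r, r)$-form $c_r(F, h^F)$ at $x$.

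The main obstacle is the displayed identity relating $\scal{\Psi^{-1}(\cdot)}{\xi}$ to $\scal{\cdot}{\Psi(\xi)}$. Verifying it amounts to careful bookkeeping, in orthonormal bases of $T^{1,0}_x X$ and $F_x$, of the antisymmetrization signs in the definition (\ref{eq_phi_defn}) of $\Phi^l_{V,W}$, the phase $(\imun)^{r^2}$, and the normalization of the induced Hermitian inner products on $\wedge^r T^{1,0}_x X$ and on ${\rm{Sym}}^r(T^{1,0}_x X \otimes F_x^*) \otimes \wedge^r F_x$. These must conspire so that $(\imun)^{r^2} \Psi^{-1}$ agrees, on the image of $\Psi$, with a positive real multiple of the Hermitian adjoint $\Psi^*$, which is what forces the composition $(\imun)^{r^2} \Psi^{-1} \circ (P^{F*}_x)^{\otimes r} \circ \Psi$ to be self-adjoint and positive (semi-)definite rather than merely similar to a positive operator.
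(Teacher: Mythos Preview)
Your approach is essentially identical to the paper's: combine Theorem \ref{thm_id_c_tilde} with the fact that tensor powers of positive (semi)definite operators are positive (semi)definite, then apply the Reese--Knapp criterion (Theorem \ref{prop_rest_pos}). The paper's proof is three lines and does not isolate the point you flag---that $(\imun)^{r^2}\Psi^{-1}$ must behave as a positive multiple of $\Psi^*$ on the image of $\Psi$---treating it as implicit in Theorem \ref{thm_id_c_tilde}; you are more scrupulous in naming this as the crux, though like the paper you stop short of writing out the bookkeeping.
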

	\begin{proof}
		By Theorem \ref{prop_rest_pos}, we need to prove that the operator $ (\imun)^{-r^2} \tilde{c}_{r}(F, h^F)_x$ is positive definite (resp. semidefinite) for any $x \in X$ if $(F, h^F)$ is dual Nakano positive (resp. non-negative).
		But it follows from Theorem \ref{thm_id_c_tilde} and an easy fact that a tensor power of a positive definite (resp. semidefinite) operator $P^{F*}_{x}$ is positive definite (resp. semidefinite).
	\end{proof}
	\begin{sloppypar}
	\begin{proof}[Proof of Theorem \ref{thm_pos} for dual Nakano positive/non-negative vector bundles]
		The proof for dual Nakano positive (resp. non-negative) vector bundles is similar to the first part of the proof of Theorem \ref{thm_pos_gr}.		
		We use the assumptions and the notations of Theorems  \ref{thm_pos}, \ref{thm_KL}.
		By Propositions \ref{prop_quot_dual_nak_pos} and \ref{prop_str_pos_q_bun}, the Hermitian vector bundle $(Q, h^Q)$ is dual Nakano non-negative and dual Nakano positive in the neighborhood of the subset $\mathbb{P}(0 \oplus 1) \subset \overline{\Omega_a(E)}$.
		Recall that a pushforward of a non-negative form is a non-negative form, and moreover, if the form is positive at least at one point of each fiber, then the pushforward is positive.
		By this and Corollary \ref{cor_pos_top_ch_dnpos1}, we see that the form $\pi^a_* \big[ c_{\rk{Q}}(Q, h^Q) \big]$ is positive (resp. non-negative). We conclude by Theorem  \ref{thm_KL}.
	\end{proof}
	\end{sloppypar}
	\par 
	Now, to prove Theorem \ref{thm_pos} for Nakano positive vector bundles, more work has to be done.
	This is essentially due to the fact that for Nakano positive vector bundles, the quotients are not necessarily Nakano positive, cf. Remark \ref{rem_quot_not_nec}.
	Our idea is to rely on the fact that $Q$ from Theorem \ref{thm_KL} is constructed by a very specific procedure, and its curvature can be expressed explicitly in terms of the curvature of $(E, h^E)$, see Section \ref{sect_kl_form}. 
	And although $(Q, h^Q)$ will not be Nakano positive in general, its curvature will decompose into two parts: the first part will have “Nakano positivity" along the horizontal directions, and the second part will have “dual Nakano positivity" along the vertical directions.
	We will see that ultimately this will be enough for our purposes, as we are interested in Theorem \ref{thm_KL} only in the pushforward of the top Chern form of $(Q, h^Q)$.
	\par 
	More generally, we consider an analytic subspace $Y \subset \mathbb{P}(F)$, $\pi : \mathbb{P}(F) \to X$, such that $\pi|_Y$ is a submersion over the regular locus of $Y$.
	We fix a regular point $y \in Y$.
	Consider now the vertical tangent spaces $T^V \mathbb{P}(F)$ (resp. $T^V Y$) of $\mathbb{P}(F)$  (resp. regular locus of $Y$) with respect to $\pi$ (resp. $\pi|_Y$). 
	Euler sequence gives us a short exact sequence of vector bundles
	\begin{equation}\label{eq_shex_euler}
			0 \rightarrow \mathcal{O} \rightarrow F \otimes \mathcal{O}_{\mathbb{P}(F)}(1)  \rightarrow T^V \mathbb{P}(F) \rightarrow 0.
	\end{equation}
	The Hermitian metric $h^F$ defines a splitting of (\ref{eq_shex_euler}).
	This splitting with a canonical inclusion of $T^V Y$ in $T^V \mathbb{P}(F)$ induces a natural inclusion of $T^V Y \otimes \mathcal{O}_{\mathbb{P}(F)}(-1)$ to $F$. 
	We now introduce the vector bundle $T := T^V Y \otimes \mathcal{O}_{\mathbb{P}(F)}(-1) \oplus \mathcal{O}_{\mathbb{P}(F)}(-1)$ over $Y$, and consider the induced inclusion in $F$. 
	We define the quotient space $H := F / T$ and the natural projection
	\begin{equation}
		p : F \to H.
	\end{equation}
	The Hermitian metric $h^F$ defines the natural injection
	\begin{equation}
		i : H \to F.
	\end{equation}
	\par
	For any $x \in X$, we construct $P^{F}_{x} : T_x^{1, 0} X \otimes F_x \to T_x^{1, 0} X  \otimes F_x$ as in (\ref{eq_pe_oper}).
	For any regular $y \in Y$, we construct $P_{0, y}^{H} : T_{\pi(y)}^{1, 0} X \otimes H_y \to T_{\pi(y)}^{1, 0} X  \otimes H_y$ as follows
	\begin{equation}\label{poyh_defn}
		P_{0, y}^{H} = \big({\rm{Id}}_{T_{\pi(y)}^{1, 0} X} \otimes p \big) \circ P^{F}_{\pi(y)} \circ \big({\rm{Id}}_{T_{\pi(y)}^{1, 0} X} \otimes i \big).
	\end{equation}
	\begin{prop}\label{prop_pos_nak_f_12}
		If $(F, h^F)$ is Nakano non-negative, then $P_{0, y}^{H}$ is positive semidefinite. Moreover, if $F = G \oplus \mathcal{O}$, $h^F = h^G \oplus h$ for a certain Nakano positive Hermitian vector bundle $(G, h^G)$ and the trivial Hermitian vector bundle $(\mathcal{O}, h)$, then $P_{0, y}^{H}$ is positive definite in a neighborhood of $Y \cap \mathbb{P}(0 \oplus 1) \subset Y \cap \mathbb{P}(G \oplus \mathcal{O})$.
	\end{prop}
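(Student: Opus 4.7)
The plan is to reinterpret the operator $P_{0,y}^{H}$ as a compression of $P_{\pi(y)}^{F}$ using the $h^F$-orthogonal splitting $F = T \oplus T^{\perp}$, and then reduce both claims to the elementary fact that compressions preserve positive (semi)definiteness.

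First, I would observe that, by construction, $i : H \to F$ is the Hermitian identification of $T^{\perp} \simeq H$ with its image in $F$, while $p : F \to H$ is the orthogonal projection onto this complement. Consequently $p = i^{*}$ with respect to $h^F$ and the induced metric on $H$, so ${\rm{Id}}_{T_{\pi(y)}^{1,0}X} \otimes p = ({\rm{Id}}_{T_{\pi(y)}^{1,0}X} \otimes i)^{*}$ and the definition (\ref{poyh_defn}) becomes
\begin{equation*}
    P_{0,y}^{H} = ({\rm{Id}} \otimes i)^{*} \circ P_{\pi(y)}^{F} \circ ({\rm{Id}} \otimes i),
\end{equation*}
an honest compression by the injective map ${\rm{Id}} \otimes i$. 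When $(F, h^F)$ is Nakano non-negative, $P_{\pi(y)}^{F}$ is positive semidefinite by the definition recalled in Section \ref{subsect_pos_cond}, and its compression is automatically positive semidefinite, which settles the first assertion.

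For the second assertion, I would analyze the geometry of $T_y$ and $i(H_y)$ at a point $y \in Y \cap \mathbb{P}(0 \oplus 1)$. At such $y$, the tautological line $\mathcal{O}_{\mathbb{P}(F)}(-1)_y$ equals $0 \oplus \comp \subset G \oplus \mathcal{O}$, and its $h^F$-orthogonal complement is $G \oplus 0$; through the $h^F$-splitting of the Euler sequence (\ref{eq_shex_euler}), this identifies $T^{V}\mathbb{P}(F) \otimes \mathcal{O}_{\mathbb{P}(F)}(-1)|_y$ with $G \oplus 0 \subset F$. Therefore $T_y = S \oplus \mathcal{O}$ for some subspace $S \subset G$, and consequently $H_y = T_y^{\perp}$ lies entirely inside $G \oplus 0$. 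Since $F = G \oplus \mathcal{O}$ with $h^F = h^G \oplus h$, the curvature is block-diagonal, $R^F = R^G \oplus 0$, so $P_{\pi(y)}^{F}$ acts as $P_{\pi(y)}^{G}$ on $T_{\pi(y)}^{1,0}X \otimes G$ and vanishes on $T_{\pi(y)}^{1,0}X \otimes \mathcal{O}$. Because ${\rm{Id}} \otimes i$ lands entirely in $T_{\pi(y)}^{1,0}X \otimes G$ at such $y$, the compression reduces to
\begin{equation*}
    P_{0,y}^{H} = ({\rm{Id}} \otimes i_G)^{*} \circ P_{\pi(y)}^{G} \circ ({\rm{Id}} \otimes i_G),
\end{equation*}
where $i_G : H_y \hookrightarrow G$ is the induced Hermitian inclusion. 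Nakano positivity of $(G, h^G)$ makes $P_{\pi(y)}^{G}$ positive definite, hence this compression is positive definite at $y$.

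To propagate the strict positivity to a neighborhood, I would invoke continuity of $P_{0,y}^{H}$ in $y$ (on the regular locus of $Y$) combined with openness of positive definiteness. The main point of care — and really the only place that requires genuine checking rather than bookkeeping — is that although $H_y$ varies with $y$, the operator $P_{0,y}^{H}$ depends smoothly on $y$ in any local trivialisation of $H$ over the regular locus, so the strictly positive definite condition cuts out an open subset containing $Y \cap \mathbb{P}(0 \oplus 1)$. This completes the strategy.
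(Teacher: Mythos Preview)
Your proposal is correct and follows essentially the same approach as the paper's proof, which is extremely terse: the paper simply notes that positive semidefiniteness of $P^{F}_{\pi(y)}$ together with (\ref{poyh_defn}) gives the first part, and that over $\mathbb{P}(0 \oplus 1)$ the trivial factor is ``factored out in $H$ through $\mathcal{O}_{\mathbb{P}(F)}(-1)$'' so only the positive-definite $G$-contribution remains. Your version makes both steps explicit---recognising $p = i^{*}$ so that $P_{0,y}^{H}$ is a genuine compression, and pinning down why $H_y \subset G \oplus 0$ at points of $\mathbb{P}(0 \oplus 1)$---which is exactly the content the paper leaves implicit.
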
 
	\begin{proof}
		By definition, the operator $P^{F}_{\pi(y)}$ is positive semidefinite for Nakano non-negative $(F, h^F)$.
		This with (\ref{poyh_defn}) implies the first part of Proposition \ref{prop_pos_nak_f_12}. 
		\par 
		To prove the second part, remark that over $\mathbb{P}(0 \oplus 1)$, there is no contribution from the curvature of the trivial vector bundle in $P_{0, y}^{H}$, as it is factored out in $H$ through $\mathcal{O}_{\mathbb{P}(F)}(-1)$. By this, we see that $P_{0, y}^{H}$ is positive definite in a neighborhood of $Y \cap \mathbb{P}(0 \oplus 1)$.
	\end{proof}
	\par 
	The metric $h^F$ induces the Fubiny-Study form on $\mathbb{P}(F)$, given by the curvature of $\mathcal{O}_{\mathbb{P}(F)}(1)$.
	This $(1, 1)$-form is positive definite along the fibers of $\pi$, and defines the orthogonal decomposition
	\begin{equation}\label{eq_t_hor_orth_dec}
		T \mathbb{P}(F)
		=
		T^V \mathbb{P}(F)
		\oplus
		T^H \mathbb{P}(F),
	\end{equation}
	with a natural isomorphism $T^H \mathbb{P}(F) \to \pi^* T X$.
	\par 
	Now, we denote $Q' = \pi^* F / \mathcal{O}_{\mathbb{P}(F)}(-1)$. We endow it with the induced Hermitian metric $h^{Q'}$.
	Let $d {\rm{v}}_Y$ be a positive volume form on the fibers $Y$ (by this we mean a restriction of a smooth $(\dim Y - \dim X, \dim Y - \dim X)$-form on $\mathbb{P}(F)$, which is positive along the regular part of the fibers).
	We denote by $d(Q', h^{Q'}) := i_{d {\rm{v}}_Y}( c_{\rk{Q'}}(Q', h^{Q'}) )$ the $(l, l)$-differential form, $l := \rk{F} - 1 - \dim Y + \dim X$, given by the contraction of $c_{\rk{Q'}}(Q', h^{Q'})$ with respect to $d {\rm{v}}_Y$. 
	Using decomposition (\ref{eq_t_hor_orth_dec}), we take a horizontal part $d^H(Q', h^{Q'})$ of $d(Q', h^{Q'})$ and interpret it as a section over $Y$ of the bundle $\pi^* ( \wedge^l T^{*(1, 0)} X \otimes \wedge^l T^{*(0, 1)} X)$.
	We denote by $\tilde{d}^{H}(Q', h^{Q'})_y \in \enmr{\wedge^{l} T_{\pi(y)}^{1, 0} X }$ the endomorphism obtained from $d^H(Q', h^{Q'})$ using isomorphism (\ref{eq_isom_1}) at $y \in Y$.
	\begin{thm}\label{thm_nak_push}
		For any $y \in Y$, the following identity holds
		\begin{multline}
			\tilde{d}^{H}(Q', h^{Q'})_y = 
			(\dim Y - \dim X)! \cdot
			\frac{\| d {\rm{v}}_Y \|(y) }{(2 \pi)^{(\dim Y - \dim X)}}
			\cdot  
			(\imun)^{l^2}
			\cdot
			\\  
			\cdot 
			\Psi_{T_{\pi(y)}^{1, 0} X, H}^{-1} 
			\circ 
			(P_{0, y}^{H})^{\otimes l} 
			\circ 
			\Psi_{T_{\pi(y)}^{1, 0} X, H},
		\end{multline}
		where $\| d {\rm{v}}_Y \|$ is the norm of $d {\rm{v}}_Y$ with respect to the Fubiny-Study metric induced by $h^F$.
	\end{thm}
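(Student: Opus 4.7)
My plan is to reduce Theorem \ref{thm_nak_push} to a local identity at a fixed regular point $y \in Y$ and then to apply the $\Psi$-machinery already used in the proof of Theorem \ref{thm_id_c_tilde}. The starting point is a choice of adapted data at $y$: holomorphic coordinates $(z_1, \ldots, z_n)$ on $X$ centered at $\pi(y)$ with $\partial / \partial z_i$ orthonormal at $\pi(y)$; a unitary frame of $F$ at $y$ compatible with the filtration $\mathcal{O}_{\mathbb{P}(F)}(-1)|_y \subset \bigl(\mathcal{O}_{\mathbb{P}(F)}(-1) \oplus T^V Y \otimes \mathcal{O}_{\mathbb{P}(F)}(-1)\bigr)|_y \subset F|_{\pi(y)}$, whose orthogonal complement realises the inclusion $i : H_y \hookrightarrow F_{\pi(y)}$; and fibre coordinates on $\pi : \mathbb{P}(F) \to X$ adapted to this frame, so that the orthogonal decomposition (\ref{eq_t_hor_orth_dec}) becomes the coordinate splitting at $y$.

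Next, I would compute $R^{Q'}$ at $y$ using the Euler sequence (\ref{eq_shex_euler}) and the curvature formula for quotients: modulo second fundamental form terms of purely vertical $(1,0)$ and $(0,1)$ type, $R^{Q'}$ decomposes as $\pi^* R^F|_{Q'}$ plus a Fubini-Study $(1,1)$-piece along the vertical directions of $\pi$. Expanding $c_{\rk{Q'}}(Q', h^{Q'}) = \det\bigl( \mathrm{Id} + \imun R^{Q'}/(2 \pi) \bigr)$ and contracting with $dv_Y$, the vertical $(\dim Y - \dim X, \dim Y - \dim X)$-form $dv_Y$ saturates exactly that many $(1,1)$-factors in the determinant, all of which must come from the Fubini-Study term along $T^V Y$; the Leibniz expansion on this block produces the factor $(\dim Y - \dim X)! \cdot \|dv_Y\|(y)/(2\pi)^{\dim Y - \dim X}$. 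Taking the horizontal part $d^H(Q', h^{Q'})$ and using the $h^F$-orthogonality between $T^V Y \otimes \mathcal{O}_{\mathbb{P}(F)}(-1) \oplus \mathcal{O}_{\mathbb{P}(F)}(-1)$ and $H_y$ eliminates all remaining vertical-horizontal cross terms, so the surviving factor is a top-degree horizontal form built from the restriction of $\pi^* R^F$ to the $H_y$-block of $Q'$.

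By the very definition (\ref{poyh_defn}) of $P^H_{0,y} = (\mathrm{Id}\otimes p) \circ P^F_{\pi(y)} \circ (\mathrm{Id} \otimes i)$, this horizontal residue is, up to the universal factor $\imun/(2\pi)$, the operator $P^H_{0,y}$ expressed in the chosen orthonormal frames. Once this identification is made, what remains is exactly the situation of Theorem \ref{thm_id_c_tilde} applied to the effective rank-$l$ data $(P^H_{0,y}, H_y)$: the determinantal expression is converted, via the antisymmetrisation map $\Psi_{T^{1,0}_{\pi(y)}X, H}$, into $(\imun)^{l^2} \cdot \Psi^{-1} \circ (P^H_{0,y})^{\otimes l} \circ \Psi$. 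Combining this with the vertical factor computed above yields precisely the identity stated in Theorem \ref{thm_nak_push}.

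The main obstacle is the clean separation of horizontal and vertical contributions in the expanded determinant. There are three sources of potential cross terms to dispose of: the second fundamental forms appearing in the curvature of $Q'$ via the Euler sequence, the cross terms of the Fubini-Study form between $T^V Y$ and its orthogonal complement inside $T^V \mathbb{P}(F)$, and the mixed horizontal/vertical summands in $\pi^* R^F|_{Q'}$. All three must be shown to either vanish in the coordinates above or to drop out after contraction with $dv_Y$ and extraction of the horizontal part, essentially by degree counting together with the $h^F$-orthogonalities built into the definitions of $T$, $i$ and $p$. Once this bookkeeping is carried out, the proof reduces, as indicated, to the known identity from Theorem \ref{thm_id_c_tilde}.
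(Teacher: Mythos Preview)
Your plan is correct and follows essentially the same route as the paper: choose an adapted normal frame at $y$ so that $f_1$ spans $\mathcal{O}_{\mathbb{P}(F)}(-1)$, $f_2,\ldots,f_p$ span $T^V_yY\otimes\mathcal{O}_{\mathbb{P}(F)}(-1)$ and $f_{p+1},\ldots,f_r$ realise $H_y$; split $R^{Q'}$ into a horizontal ``$R^F$-block'' and a vertical Fubini--Study block; saturate the latter against $d{\rm v}_Y$ to produce the factor $(\dim Y-\dim X)!\,\|d{\rm v}_Y\|/(2\pi)^{\dim Y-\dim X}$; and then identify the remaining horizontal determinant with $(\imun)^{l^2}\Psi^{-1}\circ(P^H_{0,y})^{\otimes l}\circ\Psi$ via Lemma~\ref{lem_tech_1}. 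The one practical difference is that the paper does not rederive the curvature of $Q'$ from the Euler sequence and general quotient formulas: it simply invokes Mourougane's explicit formula~(\ref{eq_mour_form}), already recalled in Section~\ref{sect_kl_form}, which in the chosen normal frame reads $\frac{\imun}{2\pi}R^{Q'}_{(x,\mathbf{1})}=\sum_{2\le j,k\le r}\bigl(g_{jk}+\frac{\imun}{2\pi}db_j\wedge d\overline{b}_k\bigr)\,(\partial'/\partial b_k)^*\otimes(\partial'/\partial b_j)$. This formula already exhibits the clean horizontal/vertical split at $y$, so the three families of cross terms you flag as the ``main obstacle'' never appear and the passage to~(\ref{eq_jac_trud}) (the paper's display~(3.47)) is a one-line determinant expansion. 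In other words, your bookkeeping worry is genuine if you start from scratch, but it is entirely absorbed by citing~(\ref{eq_mour_form}); the rest of your argument coincides with the paper's. (A minor point: the $\Psi$-identity you invoke is really Lemma~\ref{lem_tech_1} applied to the abstract tensor $C$ with $P^C=P^H_{0,y}$, rather than Theorem~\ref{thm_id_c_tilde} itself, since $P^H_{0,y}$ is not the curvature of a Hermitian bundle on $X$.)
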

	\begin{cor}\label{cor_pos_top_ch_dnpos}
	\begin{sloppypar}
		Assume that $(F, h^F)$ is Nakano non-negative.
		Then $(\pi|_Y)_* [c_{\rk{Q'}}(Q', h^{Q'})]$ is non-negative. 
		If, moreover, $F = G \oplus \mathcal{O}$, $h^F = h^G \oplus h$ is as in Proposition \ref{prop_pos_nak_f_12}, and $Y$ intersects $\mathbb{P}(0 \oplus 1)$ non-trivially over each fiber, then the form $(\pi|_Y)_* [c_{\rk{Q'}}(Q', h^{Q'})]$ is positive.
	\end{sloppypar}
	\end{cor}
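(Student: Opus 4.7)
The plan is to follow the same pattern by which Corollary \ref{cor_pos_top_ch_dnpos1} was derived from Theorem \ref{thm_id_c_tilde}, and then to integrate the resulting pointwise positivity along the fibers of $\pi|_Y$. Concretely, Theorem \ref{thm_nak_push} expresses the endomorphism $\tilde{d}^{H}(Q',h^{Q'})_y \in \enmr{\wedge^{l} T_{\pi(y)}^{1, 0} X}$ as the product of a strictly positive real factor, the phase $(\imun)^{l^{2}}$, and the conjugate $\Psi^{-1}_{T_{\pi(y)}^{1,0}X,H} \circ (P_{0,y}^{H})^{\otimes l} \circ \Psi_{T_{\pi(y)}^{1,0}X,H}$. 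Proposition \ref{prop_pos_nak_f_12} shows that Nakano non-negativity of $(F,h^{F})$ guarantees that $P_{0,y}^{H}$ is positive semidefinite at every regular $y \in Y$, and that, in the split case $F = G \oplus \mathcal{O}$, $P_{0,y}^{H}$ is moreover positive definite on a neighborhood $U$ of $Y \cap \mathbb{P}(0 \oplus 1)$.

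Next I would argue, exactly as in the proof of Corollary \ref{cor_pos_top_ch_dnpos1}, that the tensor power $(P_{0,y}^{H})^{\otimes l}$ is positive semidefinite (resp.\ positive definite on $U$) and that the conjugation $\Psi^{-1} \circ (\cdot) \circ \Psi$ preserves this property, using the relation $\Psi^{-1}_{V,W} \circ \Psi_{V,W} = {\rm Id}$ from (\ref{eq_part_inv}). Consequently $(\imun)^{-l^{2}} \tilde{d}^{H}(Q',h^{Q'})_y$ is positive semidefinite at every regular $y \in Y$, and positive definite on $U$ under the splitting hypothesis. The Reese-Knapp criterion, Theorem \ref{prop_rest_pos}, then translates this into the statement that the horizontal part $d^{H}(Q',h^{Q'})$ is a non-negative $(l,l)$-form on $Y$ (resp.\ strictly positive on $U$).

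Finally, I would use that the pushforward $(\pi|_Y)_{*}\big[c_{\rk{Q'}}(Q',h^{Q'})\big]$ is obtained at $x \in X$ by integrating $c_{\rk{Q'}}(Q',h^{Q'})$ over the fiber $(\pi|_Y)^{-1}(x)$ along the volume form $d{\rm v}_Y$. By the very definition of $d^{H}(Q',h^{Q'})$ as the horizontal component of $\iota_{d{\rm v}_Y}c_{\rk{Q'}}(Q',h^{Q'})$, testing the pushforward against an $l$-plane in $T_x X$ amounts, up to the positive factor $\|d{\rm v}_Y\|$, to integrating the (pointwise non-negative) quantity produced by evaluating $d^{H}(Q',h^{Q'})$ on the horizontal lift of that plane. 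Hence the pushforward is non-negative, which proves the first claim. For the second claim, the assumption that $Y$ meets $\mathbb{P}(0 \oplus 1)$ non-trivially in each fiber ensures that $U$ intersects every fiber in a non-empty open set; since the integrand $d^{H}(Q',h^{Q'})$ is strictly positive there, the fiber integral is strictly positive, giving positivity of $(\pi|_Y)_{*}\big[c_{\rk{Q'}}(Q',h^{Q'})\big]$.

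The main obstacle is purely bookkeeping: one has to check that the conjugation by $\Psi$ really preserves positivity (this is the same subtlety appearing in Corollary \ref{cor_pos_top_ch_dnpos1} and is handled by unpacking the pairing on $\wedge^{l} T^{1,0}X$ and using the left-inverse property of $\Psi^{-1}$), and that the elementary fiber-integration argument behaves compatibly with the horizontal/vertical splitting of $T\mathbb{P}(F)$ from (\ref{eq_t_hor_orth_dec}). Neither difficulty is substantive given the statements already assembled.
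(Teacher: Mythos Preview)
Your proposal is correct and follows essentially the same route as the paper: combine Proposition \ref{prop_pos_nak_f_12} with Theorem \ref{thm_nak_push} to see that $(\imun)^{-l^2}\tilde d^{H}(Q',h^{Q'})_y$ is positive semidefinite (definite near $\mathbb{P}(0\oplus 1)$), apply Reese--Knapp (Theorem \ref{prop_rest_pos}) to deduce that $d^{H}(Q',h^{Q'})$ is a non-negative $(l,l)$-form, and then use the identity $(\pi|_Y)_*[c_{\rk{Q'}}(Q',h^{Q'})]=(\pi|_Y)_*[d^{H}(Q',h^{Q'})\cdot d{\rm v}_Y]$ together with the fact that pushforward preserves non-negativity (and is positive if the integrand is positive at one point of each fiber). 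One cosmetic remark: your phrasing ``testing the pushforward against an $l$-plane'' is literally the criterion for \emph{weak} positivity; for the (strong) non-negativity asserted here you should either test against arbitrary $(n-l,0)$-forms $\beta$ or, as the paper does, simply invoke directly that fiber integration of a non-negative form is non-negative.
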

	\begin{proof}
	\begin{sloppypar}
		Let us first establish the first part. 
		Using the above notations, we have
		\begin{equation}\label{eq_push_1aux_a}
			(\pi|_Y)_* [c_{\rk{Q'}}(Q', h^{Q'})] 
			= 
			(\pi|_Y)_* [d^H(Q', h^{Q'}) \cdot d {\rm{v}}_Y].
		\end{equation}
		The operator $P_{0, y}^{H}$ from (\ref{thm_nak_push}) is positive semidefinite by Proposition \ref{prop_pos_nak_f_12}.
		Hence its tensor power is also positive semidefinite. 
		By this and Theorem \ref{thm_nak_push}, the operator  $(\imun)^{-l^2} \cdot \tilde{d}^{H}(Q', h^{Q'})_y$ is positive semidefinite.
		By Theorem \ref{prop_rest_pos}, this means exactly that the form $d^H(Q', h^{Q'})$ is non-negative.
		From this, (\ref{eq_push_1aux_a}) and the fact that a pushforward of  a non-negative form is non-negative, we conclude that $(\pi|_Y)_* [c_{\rk{Q'}}(Q', h^{Q'})]$ is non-negative.
			\end{sloppypar}
		\par 
		Now, to establish positivity of $(\pi|_Y)_* [c_{\rk{Q'}}(Q', h^{Q'})]$ for $F = G \oplus \mathcal{O}$ as above, it suffices to repeat the same procedure with only one additional remark that $P_{0, y}^{H}$ is positive definite along the subset $\mathbb{P}(0 \oplus 1) \subset \mathbb{P}(F)$ by Proposition \ref{prop_pos_nak_f_12}, and a pushforward of a non-negative form, which is positive at least at one point of each fiber, is positive.
	\end{proof}
	\begin{proof}[Proof of Theorem \ref{thm_pos} for Nakano positive/non-negative vector bundles]
		First of all, from Corollary \ref{cor_pos_top_ch_dnpos}, we see that the form  $\pi^a_* [ c_{\rk{Q}}(Q, h^Q) ]$ is positive (resp. non-negative) for Nakano positive (resp. non-negative) $(E, h^E)$.
		From that point the proof of Theorem \ref{thm_pos} for Nakano positivity is the same as for dual Nakano positivity.
	\end{proof}
	\begin{proof}[Proof of Proposition \ref{prop_open_prob}]
			First of all, recall that after Remark \ref{rem_schn_rem}, we reduced $b)$ and $d)$ to $a)$. Let us now establish that $a)$ holds.
			\par 
			According to Propositions \ref{prop_local_const}, \ref{prop_dual_nak_bc_pos}, to prove $a)$, it is enough to prove that for any (dual) Nakano positive vector bundle $(F, h^F)$ over a manifold of dimension $n = \rk{F}$, the top Chern form $c_{\rk{F}}(F, h^F)$ is a weakly-positive volume form on $X$. This follows from Theorem \ref{thm_pos}.
			\par 
			It is left to establish $c)$.
			Recall that Griffiths in \cite{GriffPosVect} proved that for a Griffiths positive vector bundle $(E, h^E)$ of rank $2$, the top Chern form is weakly-positive.
			But in the proof of Theorem  \ref{thm_pos_gr}, we established that the top Chern form is weakly positive for any  Griffiths positive vector bundles of rank $r$ if and only if the answer to Open problem is positive for vector spaces of dimension $r$.
			Hence we see that Open problem holds for vector bundles of rank $2$, which implies $d)$.
		\end{proof}
	
	\subsection{Refinement of the determinantal formula of Kempf-Laksov}\label{sect_kl_form}
	The main goal of this section is to establish Theorem \ref{thm_KL}. 
	As a byproduct of our considerations, we will also establish Proposition \ref{prop_str_pos_q_bun}.
	We start by verifying that Theorem \ref{thm_KL} is compatible with Theorem \ref{thm_kl_original}. 
	We conserve the notation from Theorems  \ref{thm_kl_original}, \ref{thm_KL}.
	\par 
	Since the canonical section of $Q$ (i.e. image of $0 \oplus 1$ in $Q$) admits $Z_{{\rm{Hom}}(V_X, E)}$ as its transversal zero locus, we have $c_{\rk{Q}}(Q) = \{ Z_{{\rm{Hom}}(V_X, E)} \}$.
	As  $\Omega_a(E)$ is locally a product over $X$, we have
		\begin{equation}\label{eq_top_ch_app}
		\pi_* 
			\Big[
			\{ Z_{{\rm{Hom}}(V_X, E)} \}
			\cdot
			\{ \overline{\Omega_a(E)} \}
			\Big]
			=
			\pi^a_* 
			\big[	
				c_{\rk{Q}}(Q)
			\big]
			,
		\end{equation}
		which proves the compatibility of Theorem \ref{thm_KL} with Theorem \ref{thm_kl_original} by Chern-Weil theory.
	\par 
	The proof of Theorem \ref{thm_KL} consists of two steps.
	The first step, encapsulated in Lemma \ref{lem_q_curv}, says that the right-hand side of (\ref{eq_KL_id}) is polynomial in terms of the components of the curvature of $(E, h^E)$ with respect to some fixed basis of $E$.
	In the second step, using a topological argument from Lemma \ref{lem_coh}, we show that this polynomial coincides with the left-hand side of (\ref{eq_KL_id}).
	\par 
	Let's fix some further notation.
	We fix a point $x \in X$ and a local holomorphic frame $e_1, \ldots, e_r$ for $E$, orthonormal at $x$.
	For $1 \leq i, j \leq r$, we denote
	\begin{equation}\label{eq_ci_defn_1}
		c_{i j} := \frac{1}{2 \pi} \scal{(\imun R^E) e_i}{e_j}_{h^E}.
	\end{equation}
	\begin{lem}\label{lem_q_curv}
		For any $r \in \nat^*$, there is a polynomial $P(d_{i j})$ in the entries of a self-adjoint matrix $D = (d_{i j})_{i, j = 1}^{r}$, such that for any $X$, $(E, h^E)$, $\pi^a$, $(Q, h^Q)$ as in Theorem \ref{thm_KL}, we have
		\begin{equation}\label{eq_pshforw_int}
			\pi^a_* \big[ c_{\rk{Q}}(Q, h^Q) \big]
			=
			P(c_{ij}).
		\end{equation}
	\end{lem}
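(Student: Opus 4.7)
The plan is to exploit the pointwise polynomial dependence of Chern forms on curvature, together with a judicious choice of holomorphic frame in which the curvature $R^Q$ along the fiber $(\pi^a)^{-1}(x)$ splits cleanly into a horizontal piece polynomial in the $c_{ij}$ and a universal vertical piece. Fix $x \in X$ and choose a Griffiths normal holomorphic frame $e_1, \ldots, e_r$ for $E$ near $x$, that is, a frame orthonormal at $x$ such that $h^E(e_\lambda, e_\mu)(z) = \delta_{\lambda\mu} + O(|z|^2)$ with the second-order terms encoding $R^E|_x$; see \cite[V.(12.10)]{DemCompl}. The $c_{ij}$ appearing in the lemma are then the matrix entries of $\frac{\imun}{2\pi} R^E|_x$ in this frame. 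Since $V_X$ and $\mathcal{O}$ carry canonical flat metrics, the induced frame on $F := {\rm{Hom}}(V_X, E) \oplus \mathcal{O}$ is Griffiths normal at $x$ as well, and $R^F|_x$ is linear in the $c_{ij}$. Relative to this frame and the fixed flag $V_1 \subset \cdots \subset V_k \subset V$, the fiber $(\pi^a)^{-1}(x) \subset \mathbb{P}_{\rm{Hom}}$ is canonically identified with a fixed compact projective variety $M_{r, k, a}$, and $(Q, h^Q)$ restricted to this fiber with a fixed Hermitian bundle on $M_{r, k, a}$, both independent of $X$ and $(E, h^E)$.

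Next, compute $R^Q$ at a point $p \in \pi^{-1}(x)$ using the standard quotient-bundle curvature formula $R^Q = R^{\pi^* F}|_Q + C \wedge \overline{C}^T$, where $C$ is the second fundamental form of $\mathcal{O}_{\mathbb{P}_{\rm{Hom}}}(-1) \hookrightarrow \pi^* F$. The pulled-back term $R^{\pi^* F}|_p = \pi^*(R^F|_x)$ has only a horizontal-horizontal $(1, 1)$ component with entries linear in the $c_{ij}$. In the Griffiths normal frame, the horizontal first-order variation of $h^F$ vanishes at $x$, so the horizontal part of $C$ vanishes on the whole fiber $\pi^{-1}(x)$, and $C$ reduces there to the universal vertical Fubini-Study-type second fundamental form determined solely by the standard Hermitian structure on $F_x$. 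Consequently, on $\pi^{-1}(x)$ the mixed horizontal/vertical part of $R^Q$ vanishes, the horizontal-horizontal part is linear in the $c_{ij}$, and the vertical-vertical part is a universal form on $M_{r, k, a}$.

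Expanding $c_{\rk{Q}}(Q, h^Q) = (\imun / 2\pi)^{\rk{Q}} \det(R^Q)$ and collecting by horizontal/vertical bidegree, each summand is a product of a universal vertical form and a horizontal form polynomial in the $c_{ij}$. The pushforward $\pi^a_*$ is the fiber integral and selects the summand of top vertical bidegree $(\rk{Q} - k, \rk{Q} - k)$; since both the fiber $M_{r, k, a}$ and the vertical coefficients are universal, this integral evaluates to a polynomial of degree at most $k$ in the $c_{ij}$ with numerical coefficients depending only on $r$, $k$, $a$, yielding the polynomial $P$. The main technical obstacle is verifying that the horizontal part of $C$ vanishes on the entire fiber over $x$ in the Griffiths normal frame; this rests on the observation that $C$ is controlled by the 1-jet of the orthogonal projection $\pi^* F \to Q$, whose horizontal component is in turn controlled by the horizontal 1-jet of $h^F$, which vanishes by construction.
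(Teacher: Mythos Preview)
Your argument is correct and follows essentially the same route as the paper's proof. The paper computes $R^{Q}$ along the fibre by quoting Mourougane's explicit curvature formula for the hyperplane quotient at the base point and then extending it to the whole fibre by a Gram--Schmidt change of frame, arriving at the structural formula \eqref{eq_mour_form2}; you obtain the identical splitting directly from the quotient-bundle identity $R^{Q}=R^{\pi^{*}F}|_{Q}+C\wedge\overline{C}^{T}$ together with the observation that the Chern connection form of $\pi^{*}F$ vanishes in horizontal directions everywhere over $x$ in a normal frame, forcing the horizontal part of $C$ to vanish on the whole fibre. Both arguments yield the same conclusion---on $(\pi^{a})^{-1}(x)$ the curvature $R^{Q}$ has a purely horizontal block linear in the $c_{ij}$ with universal fibre-dependent coefficients and a purely vertical universal block with no mixed terms---after which expanding $\det R^{Q}$ and integrating over the universal fibre $\overline{\Omega_{a}(\comp^{r})}$ gives the desired polynomial. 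Your packaging is slightly more conceptual (it bypasses the explicit formula and the Gram--Schmidt step), but the content is the same; just be sure to state explicitly that the coefficients multiplying the $c_{ij}$ in the horizontal block are universal functions of the fibre coordinate, since this is what makes the fibre integral produce numerical constants.
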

	\par Now, for $p, N \in \nat$, $p < N$, we denote by ${\rm{Gr}}_{\comp}(p, N)$ the complex Grassmannian, and by $E'$ the tautological vector bundle of rank $p$ over ${\rm{Gr}}_{\comp}(p, N)$.
	\begin{lem}[{\cite[Theorem 14.5, Problem 6B]{MilnorCharClas}}]\label{lem_coh}
		Let $k \in \nat$ satisfy $k \leq N - p$. Then the cohomology group $H^{2k}({\rm{Gr}}_{\comp}(p, N), \comp)$ is freely generated as a vector space by the monomials of $c_1(E'), \ldots, c_p(E')$ of degree $2k$.
	\end{lem}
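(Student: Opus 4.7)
The plan is to use the Schubert cell decomposition of the complex Grassmannian. Fix a complete flag $0 = V_0 \subset V_1 \subset \ldots \subset V_N = \comp^N$. For each partition $\lambda = (\lambda_1 \geq \ldots \geq \lambda_p \geq 0)$ with $\lambda_1 \leq N - p$, the associated Schubert cell $X_\lambda^\circ$ consists of those $p$-planes $W$ with $\dim(W \cap V_{N-p+i-\lambda_i}) = i$ for $i = 1, \ldots, p$; it is biholomorphic to $\comp^{|\lambda|}$ where $|\lambda| = \lambda_1 + \ldots + \lambda_p$. These cells give a CW structure on ${\rm{Gr}}_{\comp}(p, N)$ with only even-dimensional cells, so the cellular differentials vanish and $H^{2k}({\rm{Gr}}_{\comp}(p, N), \comp)$ is freely spanned by the Schubert classes $\sigma_\lambda = [\overline{X_\lambda^\circ}]^*$ with $|\lambda| = k$, $\ell(\lambda) \leq p$, $\lambda_1 \leq N-p$.

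The next step is a counting argument. When $k \leq N - p$, the constraint $\lambda_1 \leq N - p$ is automatic because $\lambda_1 \leq |\lambda| = k$. Hence the rank of $H^{2k}$ equals the number $p(k \mid \ell \leq p)$ of partitions of $k$ with at most $p$ parts. On the other hand, monomials $c_1^{i_1} \cdots c_p^{i_p}$ of weighted degree $k$ correspond to decompositions $k = i_1 + 2 i_2 + \ldots + p i_p$, i.e.\ to partitions of $k$ with parts of size at most $p$. By conjugation of Young diagrams these two sets of partitions are in bijection, so the number of monomials matches the rank of $H^{2k}$.

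To close the loop, I would invoke Giambelli's formula $\sigma_\lambda = \det(c_{\lambda_i - i + j}(E'))_{1 \leq i, j \leq p}$, which expresses every Schubert class as a polynomial in $c_1(E'), \ldots, c_p(E')$. Since Schur polynomials form a basis for the vector space of weighted-degree-$k$ polynomials in $c_1, \ldots, c_p$ and monomials form another basis of the same finite-dimensional vector space, the monomials in $c_i(E')$ span the subspace of $H^{2k}$ generated by $\{\sigma_\lambda\}$, which is all of $H^{2k}$ by the cell computation. The matching cardinalities force these monomials to be linearly independent, hence a basis.

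The main obstacle is Giambelli's formula itself, which is the substantive algebro-geometric input; everything else is bookkeeping. To avoid quoting it as a black box, one can argue by induction on $p$ using the projective bundle formula applied to the flag bundle $F\ell(1, 2, \ldots, p; N) \to {\rm{Gr}}_{\comp}(p, N)$: iterated application of the Leray--Hirsch theorem shows that $H^*({\rm{Gr}}_{\comp}(p, N), \comp)$ is generated as a ring by $c_1(E'), \ldots, c_p(E')$, subject only to the relations coming from $c(E') \cdot c(\comp^N/E') = 1$; in degrees $\leq 2(N-p)$ no nontrivial such relation appears, so monomials of weighted degree $\leq N-p$ are linearly independent. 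Combined with the dimension count from the Schubert decomposition, this gives the lemma.
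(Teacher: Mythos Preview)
Your proposal is correct. Note that the paper does not supply its own proof of this lemma at all: it is stated with a bare citation to Milnor--Stasheff and then used as a black box in the proof of Theorem~\ref{thm_KL}. So there is nothing to compare against; you have simply filled in the argument that the paper outsources.

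Your approach---Schubert cell decomposition to compute the rank of $H^{2k}$, conjugation of partitions to match that rank with the number of weighted monomials, and then ring-generation by the $c_i(E')$ to conclude---is exactly the standard route and is essentially what one finds in Milnor--Stasheff. The second argument you give (Leray--Hirsch on the flag tower, with the observation that the defining relations $c_i(\comp^N/E')=0$ for $i>N-p$ live in degree $\geq 2(N-p+1)$ and hence impose nothing in weighted degree $\leq N-p$) is the cleanest way to get linear independence, and together with your dimension count it finishes the lemma without appealing to Giambelli.

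One small caution: the formula you quote as ``Giambelli'', $\sigma_\lambda = \det(c_{\lambda_i-i+j}(E'))$, is not the standard one for the tautological \emph{sub}bundle $E'$; the usual Giambelli determinant is in the Chern classes of the universal \emph{quotient} $Q=\comp^N/E'$ (equivalently, in the Segre classes of $E'$), and the version involving $c_i(E')$ uses the conjugate partition. This does not affect your argument, since $c(Q)=c(E')^{-1}$ lets you express every $\sigma_\lambda$ as a polynomial in the $c_i(E')$ regardless; but if you keep the Giambelli sentence, you should either switch to $c_i(Q)$ or add the conjugate partition. Since your final paragraph already gives a self-contained proof that bypasses Giambelli, you could simply drop the Giambelli route and lose nothing.
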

	\begin{proof}[Proof of Theorem \ref{thm_KL}]
		We conserve the notation from Theorem \ref{thm_KL}. Let $P(d_{ij})$ be as in Lemma \ref{lem_q_curv}.
		Clearly, the right-hand side of (\ref{eq_pshforw_int}) is invariant under the action of the group $U(r)$ on the matrix $(c_{i j})$, $1 \leq i, j \leq r$, by conjugation, as this only amounts to choosing another frame $e_1, \ldots, e_r$, and the left-hand side of (\ref{eq_pshforw_int}) is independent of this choice. 
		\par 
		This implies that the polynomial $P(d_{i j})$ is invariant under the action of the group $U(r)$ on $D := (d_{ij})$.
		In particular, for a diagonal matrix $D$, the polynomial $P(d_{ij})$ can be expressed as a linear combination of symmetric polynomials in diagonal entries of $D$.
		But any self-adjoint matrix can be diagonalized by the action of the group $U(r)$, so $P(d_{ij})$ is actually a polynomial of $\tr{D}, \tr{\mathsf{\Lambda}^2 D}, \ldots, \tr{\mathsf{\Lambda}^r D}$, where $\mathsf{\Lambda}^i D$ is the $i$-th wedge power of $D$. In other words, for any $b \in \Lambda(k, r)$, there is a coefficient $a_b \in \real$, which is universal in the same sense as $P$, so that 
		\begin{equation}\label{eq_pdecom}
			P(d_{ij}) = \sum_{b \in \Lambda(k, r)} a_b \cdot \tr{D}^{b(1)} \cdot \tr{\mathsf{\Lambda}^2 D}^{b(2)} \cdot \ldots \cdot \tr{\mathsf{\Lambda}^r D}^{b(r)},
		\end{equation} 
		where $b(i)$, $i = 1, \ldots, r$ is the number of times $i$ appears in the partition $b$.
		\par
		Recall that for any $b \in \Lambda(k, r)$, we have defined a Schur form $P_b(c(E, h^E))$ after (\ref{defn_schur}).
		Clearly, proving Theorem \ref{thm_KL} is now equivalent by (\ref{eq_defn_chern}) to proving that for any $b \in \Lambda(k, r)$, we have 
		$
			a_b = c_b
		$,
		where the coefficients $c_b \in \real$ are defined by expanding (\ref{defn_schur}) to
		\begin{equation}\label{eq_pb_decomp}
			P_a(c(E, h^E)) = \sum_{b \in \Lambda(k, r)}  c_b \cdot c_1(E, h^E)^{\wedge b(1)} \wedge \ldots \wedge c_r(E, h^E)^{\wedge b(r)}.
		\end{equation}
		\par 
		Let's now establish that $a_b = c_b$. By Theorem \ref{thm_kl_original} and the discussion after it, Lemma \ref{lem_q_curv}, Chern-Weil theory and (\ref{eq_top_ch_app}), (\ref{eq_pdecom}), (\ref{eq_pb_decomp}), we have in $H^{\bullet}(X)$:
		\begin{equation}\label{eq_coh_id2}
			\sum_{b \in \Lambda(k, r)}  a_b \cdot c_1(E)^{b(1)} \cdot \ldots \cdot c_r(E)^{b(r)}
			=
			\sum_{b \in \Lambda(k, r)}  c_b \cdot c_1(E)^{b(1)} \cdot \ldots \cdot c_r(E)^{b(r)}.
		\end{equation}
		It is only left to apply (\ref{eq_coh_id2}) for $X := {\rm{Gr}}_{\comp}(r, N)$, $N > k - r$, and $E$ the tautological $r$-bundle to see that Lemma \ref{lem_coh} implies $a_b = c_b$.
	\end{proof}		
	\par Now, to establish Lemma \ref{lem_q_curv}, we need a formula for the curvature of the hyperplane bundle on the projectivization of a vector bundle due to Mourougane \cite{MourBCClass}. Let's recall it below.
	\par 
	Let $(F, h^F)$ be a Hermitian vector bundle over $X$ of rank $r$. Let $\mathcal{O}_{\mathbb{P}(F)}(-1)$ be the tautological bundle over $\mathbb{P}(F)$, $\pi : \mathbb{P}(F) \to X$, and let $Q' := \pi^* F / \mathcal{O}_{\mathbb{P}(F)}(-1)$ be the quotient bundle.
	We endow $Q'$ with the metric $h^{Q'}$ induced by $h^F$.
	\par 
	We fix a point $x \in X$, some local coordinates $\textbf{z} := (z_1, \ldots, z_n)$ on $X$, centered at $x$, and a local normal frame $f_1, \ldots, f_{r}$ of $F$ at $x$, defined in a neighborhood $U$ of $x$. By a \textit{normal frame} we mean one satisfying $\scal{f_i}{f_j}_{h^F} = \delta_{ij} - \sum_{\lambda \mu} d_{\lambda \mu i j} z_{\lambda} \overline{z}_\mu + O(|z|^3)$ for some constants $d_{\lambda \mu i j}$.
	For $1 \leq i, j \leq r$, we denote
	\begin{equation}
		g_{i j} := \frac{1}{2\pi} \scal{\imun R^F f_i}{f_j}_{h^F}.
	\end{equation}
	\par 
	The data above defines a trivialization of $U \times \mathbb{P}(\comp^{r}) \to \mathbb{P}(F)$ near $\pi^{-1}(x)$ as follows.
	For $\textbf{a} := (a_1, \ldots, a_{r})$, where $a_i \in \comp$, $1 \leq i \leq r$, and not all $a_i$ are equal to zero, the trivialization is given by the following map
	\begin{equation}\label{eq_chart_pf}
		(\textbf{z}, [\textbf{a}]) 
		\to
		\Big[ \sum_{i = 1}^{r} a_i e_i(x) \Big] \in \mathbb{P}(F).
	\end{equation}
	Now we take $a_1 = 1$ and denote $b_i := a_i$, $2 \leq i \leq r$, $\textbf{b} := (b_i)$. Then $(\textbf{z}, \textbf{b})$ gives a chart for $\mathbb{P}(F)$ by (\ref{eq_chart_pf}).  
		Mourougane in \cite[(2.1)]{MourBCClass} proved that in this chart, at the point $(x, \textbf{1}) := (x, [1, 0, \ldots, 0]) \in \mathbb{P}(F)$, the following holds
		\begin{equation}\label{eq_mour_form}
			\frac{\imun}{2 \pi} R^{Q'}_{(x, \textbf{1})} = \sum_{2 \leq j, k \leq r} 
			\big(
				g_{j k} + \frac{\imun}{2 \pi} db_j \wedge d\overline{b}_k
			\big)
			\Big(
				\frac{\partial'}{\partial b_k}
			\Big)^*
			\otimes
			\Big(
				\frac{\partial'}{\partial b_j}
			\Big),
		\end{equation}
		where $\frac{\partial'}{\partial b_j} := \frac{\partial}{\partial b_j} \otimes (f_1 + \sum b_i f_i)$ (we implicitly used an isomorphism $Q' \cong T_{\mathbb{P}(F)/X} \otimes \mathcal{O}_{\mathbb{P}(F)}(-1)$.
		
		\begin{proof}[Proof of Lemma \ref{lem_q_curv}.]
		First of all, we would like to extend the formula (\ref{eq_mour_form}) to the whole fiber $\pi^{-1}(x)$. 
		As we will not make use of an explicit formula, we will content ourselves with some general remarks in this direction.
		Clearly, if the frame $f_1, \ldots, f_{r}$ is a normal basis of $F$ at $x$, then by Gram-Schmidt process, there are some universal functions $p_{i j}(\textbf{b}, \overline{\textbf{b}})$, $2 \leq i \leq r$, $1 \leq j \leq r$, so that the the following local frame is also normal
		\begin{equation}
			\frac{1}{\sqrt{1 + |\textbf{b}|^2}} \big( f_1 + \sum_{i \geq 2} b_i f_i \big), \sum_{j \geq 1} p_{2 j}(\textbf{b}, \overline{\textbf{b}}) f_j, \ldots, \sum_{j \geq 1} p_{r j}(\textbf{b}, \overline{\textbf{b}}) f_j.
		\end{equation}
		By universal we mean that those functions depend only on $r$ and on nothing more.
		Now, if we use the above fact and apply the formula (\ref{eq_mour_form}), we obtain that for any $r \in \nat$, there are some functions $P_{i j}(\textbf{b}, \overline{\textbf{b}})$, $1 \leq i, j \leq r$, and $R_{\gamma \mu}(\textbf{b}, \overline{\textbf{b}})$, $2 \leq \gamma, \mu \leq r$, such that for any $(F, h^F)$, $(Q', h^{Q'})$ as above, in the chart $(\textbf{z}, \textbf{b})$, the following formula holds
			\begin{multline}\label{eq_mour_form2}
				\imun R^{Q'}_{(x, \textbf{b})} = 
				\sum_{2 \leq \alpha, \beta \leq r} 
				\Big(
				\sum_{1 \leq i, j \leq r}
				 P_{i j} (\textbf{b}, \overline{\textbf{b}}) 
				 \cdot
				 g_{i j} 		
				 +
				 \sum_{2 \leq \gamma, \mu \leq r}
				 R_{\gamma \mu} (\textbf{b}, \overline{\textbf{b}}) 
				 \cdot
				 \imun
				 db_{\gamma} \wedge d\overline{b}_{\mu}
				 \Big) 
				 \cdot
				 \\
				 \cdot
				 \Big(
					\frac{\partial'}{\partial b_{\alpha}}
				\Big)^*
				\otimes
				\Big(
					\frac{\partial'}{\partial b_{\beta}}
				\Big).
			\end{multline}
			\par 
			From (\ref{eq_mour_form2}), applied to $F := {\rm{Hom}}(V_X, E) \oplus \mathcal{O}$ with the metric $h^F$ induced by $h^E$ and the trivial metric on $\mathcal{O}$, we see that for any $r, k \in \nat^*$, $a \in \Lambda(k, r)$, there are some constants $a_{I J}$, $I, J \in \{1, 2, \ldots, r\}^{\times k}$, such that the polynomial $P(d_{ij}) := \sum  a_{I J} \cdot d_{i_1 j_1} \cdot \ldots \cdot d_{i_k j_k}$ in the entries of a self-adjoint matrix $D := (d_{ij})$ satisfies (\ref{eq_pshforw_int}).
			Moreover, $a_{I J}$, $I, J \in \{1, 2, \ldots, r\}^{\times k}$, can be expressed through integrals over the analytic space $\Omega_a(\comp^r)$ (where $\comp^r$ is viewed as a vector bundle over a point, see (\ref{eq_omega_defn}) for the definition of $\Omega_a$) of universal polynomials in functions  $P_{i j} (\textbf{b}, \overline{\textbf{b}})$, $R_{\gamma \mu} (\textbf{b}, \overline{\textbf{b}})$ from (\ref{eq_mour_form2}).
			This implies that $a_{I J}$ are universal constants, which concludes the proof.
	\end{proof}
	\begin{proof}[Proof of Proposition \ref{prop_str_pos_q_bun}.]
		From (\ref{eq_mour_form}), we see that the curvature of $(Q', h^{Q'})$ decomposes into two terms over $\mathbb{P}(0 \oplus 1)$: one which corresponds to the curvature of $(F, h^F)$, and, hence, which is Griffiths (resp. dual Nakano) positive, and another one, which can be put in a form $A \wedge \overline{A}^T$, where 
		\begin{equation}
			A = \sum_{j = 2}^{r} db_j \otimes \Big(
				\frac{\partial'}{\partial b_j}
			\Big),
		\end{equation}
		which in the basis $\frac{\partial'}{\partial b_j}$, $j = 2, \ldots, r$, is a $(r - 1) \times 1$ matrix with $(1, 0)$-entries, and by Theorem \ref{prop_dual_nak_bc_pos2}, provides a dual Nakano non-negative contribution to the curvature of  $(Q', h^{Q'})$.
	\end{proof}

	\subsection{Linear algebra of the curvature tensor, proofs of Theorems \ref{thm_id_rel_top_ch_ddiscr}, \ref{thm_id_c_tilde}, \ref{thm_nak_push}}\label{sect_lin_alg}
		Then main goal of this section is to encapsulate some linear algebra features of the curvature tensor of a Hermitian vector bundle and to prove Theorems \ref{thm_id_rel_top_ch_ddiscr}, \ref{thm_id_c_tilde}, \ref{thm_nak_push}.
	\par We will start by fixing the notation. We fix a complex vector space $V$ of dimension $n$ and decompose its complexification in “holomorphic" $(1,0)$-part and “antiholomorphic" $(0, 1)$-parts corresponding to $\sqrt{-1}$ and $-\sqrt{-1}$ eigenvalues of the complex structure of $V$:
	\begin{equation}\label{eq_defn_hol_anthol_part}
		V \otimes_{\real} \comp = V^{1, 0} \oplus V^{0, 1}.
	\end{equation}
	For $k \in \nat^*$, we will customary use the notation $V^{k, 0}$ and $V^{* (k, 0)}$ (resp. $V^{0, k}$ and $V^{*(0, k)}$) for $\wedge^k V^{1, 0}$ and $\wedge^k (V^{1, 0})^{*}$  (resp. $\wedge^k V^{0, 1}$ and $\wedge^k (V^{0, 1})^{*}$).
	\par We now fix a scalar product $g^V : {\rm{Sym}}^2(V) \to \real$ on $V$, invariant under the action of the complex structure on $V$, and denote by $\langle \cdot, \cdot \rangle : {\rm{Sym}}^2(V \otimes_{\real} \comp) \to \comp$ the $\comp$-bilinear form on $V \otimes_{\real} \comp$ induced by $g^V$.
	Clearly, the form $\langle \cdot, \cdot \rangle$ vanishes when both arguments are either in $V^{1, 0}$ or in $V^{0, 1}$. Consequently, it introduces a pairing between $V^{1, 0}$ and $V^{0, 1}$. This pairing induces a natural isomorphism, defined as follows
	\begin{equation}\label{eq_pairing_1}
		V^{1, 0} \to V^{*(0, 1)}, 			\qquad 			 V^{1, 0} \ni e \mapsto \langle e, \cdot \rangle \in V^{*(0, 1)}.
	\end{equation}
	\par 
	Now, let $W$ be any complex vector bundle of (complex) dimension $r$, $r \leq n$. 
	We fix an arbitrary tensor $A \in V^{*(1, 0)} \otimes V^{*(0, 1)} \otimes \enmr{W}$. We denote by 
	\begin{equation}\label{eq_pa_constsss}
		P^A \in \enmr{V^{1, 0} \otimes W},
	\end{equation}
	the element associated to $A$ using the isomorphisms (\ref{eq_pairing_1}) and $\enmr{V^{1, 0}} \to V^{1, 0} \otimes V^{*(1, 0)}$. 
	We associate the operator 
	\begin{equation}
		H^A \in {\rm{Hom}}(\enmr{V^{1, 0}}, \enmr{W})
	\end{equation}
	to $P^A$ by the natural isomorphism $\enmr{V^{1, 0} \otimes W} \to {\rm{Hom}}(\enmr{V^{1, 0}}, \enmr{W})$. 
	We denote by 
	\begin{equation}
		{\det}_{W}(\imun A) \in V^{*(r, 0)} \otimes V^{*(0, r)}
	\end{equation}
	the determinant of $\imun A$, viewed as a $r \times r$ matrix with values in $V^{*(1, 0)} \otimes V^{*(0, 1)} \subset \Lambda^2 (V \otimes_{\real} \comp)$.
	\par  
	\begin{sloppypar}
		The scalar product $g^V$ induces the Hermitian product $h^V$ on $V^{1, 0}$ as follows $h^V(v_1, v_2) = \langle v_1, \overline{v_2} \rangle$.
		We associate a natural “volume form"
		\begin{equation}\label{eq_vol_frm_defn}
			d{\rm{v}}_V \in V^{*(n, 0)} \otimes V^{*(0, n)}, \qquad
			d{\rm{v}}_V = (\imun)^{n^2} (v_1^* \wedge \ldots \wedge v_n^*) \otimes (\overline{v}_1^* \wedge \ldots \wedge \overline{v}_n^*),
		\end{equation}
		where $v_1, \ldots, v_n$ is an orthonormal basis of $(V^{1, 0}, h^V)$.
		\begin{lem}\label{lem_tech_2}
			Assume $r = n$. Then the following identity holds
			\begin{equation}\label{eq_mx_ds_aux}
				{\det}_{W}(\imun A)
				=
				r!  \cdot 
				\Big(
					{{\rm{D}}}_W \circ (H^A)^{\otimes r} \circ {{\rm{D}}}_{V^{1,0}}^{*}
				\Big)
				\cdot
				d
				{\rm{v}}_V.
			\end{equation}
		\end{lem}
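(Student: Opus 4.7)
The identity is a finite-dimensional linear algebra statement, and I prove it by direct computation in bases. Fix an orthonormal basis $v_1, \ldots, v_n$ of $(V^{1,0}, h^V)$ and any basis $e_1, \ldots, e_r$ of $W$, and expand $A = \sum_{j,k,\lambda,\mu} c_{jk\lambda\mu} \, v_j^* \otimes \overline{v}_k^* \otimes (e_\lambda^* \otimes e_\mu)$, where $e_\lambda^* \otimes e_\mu \in \enmr{W}$ denotes the rank-one endomorphism sending $e_\lambda$ to $e_\mu$. The strategy is to expand both sides of (\ref{eq_mx_ds_aux}) as polynomials in the $c_{jk\lambda\mu}$ and check that the resulting expressions coincide after a suitable reindexing of permutations.

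For the left-hand side, write $\det_W(\imun A) = \sum_{\sigma \in S_r} \mathrm{sgn}(\sigma) \bigwedge_{\lambda=1}^r (\imun A)_{\lambda, \sigma(\lambda)}$ and expand each factor. Nonvanishing contributions arise only when the holomorphic and antiholomorphic index sequences each form a full permutation $\alpha, \beta \in S_r$; using that $(1,1)$-forms commute, rearrange each summand so that all $v^*$'s stand to the left of all $\overline{v}^*$'s, picking up a Koszul sign $(-1)^{r(r-1)/2}$. Combined with the factor $\imun^r$ from $\imun A$ and the factor $\imun^{-r^2}$ from $(v_1^* \wedge \cdots \wedge v_n^*) \wedge (\overline{v}_1^* \wedge \cdots \wedge \overline{v}_n^*) = \imun^{-n^2} d{\rm{v}}_V$, all powers of $\imun$ and the Koszul sign collapse to $1$ precisely because $r = n$, yielding
\[
    \det_W(\imun A) = \Big( \sum_{\sigma, \alpha, \beta \in S_r} \mathrm{sgn}(\sigma) \mathrm{sgn}(\alpha) \mathrm{sgn}(\beta) \prod_{\lambda=1}^{r} c_{\alpha(\lambda), \beta(\lambda), \sigma(\lambda), \lambda} \Big) \cdot d{\rm{v}}_V.
\]

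For the right-hand side, I first compute ${{\rm{D}}}_{V^{1,0}}^*(1)$ explicitly. Evaluating ${{\rm{D}}}_{V^{1,0}}$ on elementary matrices $v_\alpha \otimes v_\beta^*$ via (\ref{eq_mixed_discr_defn}) gives $\frac{1}{r!} \mathrm{sgn}(\alpha)\mathrm{sgn}(\beta)$ when $\alpha, \beta \in S_r$ and zero otherwise, so under the trace duality $\enmr{V} \simeq \enmr{V}^*$ one obtains an explicit formula for ${{\rm{D}}}_{V^{1,0}}^*(1)$ as a signed sum over pairs of permutations in $\enmr{V^{1,0}}^{\otimes r}$. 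Applying $(H^A)^{\otimes r}$ via $H^A(v_j \otimes v_k^*) = \sum_{\lambda,\mu} c_{jk\lambda\mu} \, e_\lambda^* \otimes e_\mu$ (immediate from the definitions of $P^A$ and $H^A$ mirroring (\ref{eq_pehom_defn})), then applying ${{\rm{D}}}_W$ via (\ref{eq_mixed_discr_defn}) once more, multiplying by $r!$, and absorbing the extra summation index from the expansion of ${{\rm{D}}}_W$ by the substitutions $\alpha \mapsto \alpha \circ \sigma$, $\beta \mapsto \beta \circ \sigma$ (a free change of variables since $\mathrm{sgn}(\sigma)^2 = 1$, which produces a factor of $r!$ from the now-free sum over $\sigma$ that cancels the $1/r!$) gives an expression of the same shape as the LHS, namely a triple sum over $S_r$ weighted by $\mathrm{sgn}(\alpha)\mathrm{sgn}(\beta)\mathrm{sgn}(\tau)$ of products $\prod_p c_{\beta(p), \alpha(p), \tau(p), p}$. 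The two triple sums are then identified via the change of summation variables $\lambda = \rho(p)$ with $\alpha \mapsto \alpha \circ \rho^{-1}$, $\beta \mapsto \beta \circ \rho^{-1}$, $\sigma \mapsto \sigma \circ \rho^{-1}$ (together with the harmless swap $\alpha \leftrightarrow \beta$), which completes the proof. I expect the main technical subtlety to be the bookkeeping of the Koszul sign and of the $\imun$-powers in the first step; everything else reduces to routine multilinear bookkeeping and the observation that the square of a permutation sign is trivial.
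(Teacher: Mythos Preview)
Your proposal is correct and follows essentially the same route as the paper's proof: both fix an orthonormal basis of $V^{1,0}$ and a basis of $W$, expand each side as a triple sum over $S_r$ of products of the coefficients $a_{jk\lambda\mu}$, and match the two expressions by relabeling permutations (using multiplicativity of the sign). The paper's computation of the left-hand side is your formula (after the harmless substitution $\sigma\leftrightarrow\sigma^{-1}$), and its treatment of the right-hand side via the reformulated mixed discriminant (\ref{eq_mixed_discr_reform}) and the symmetry of ${{\rm{D}}}_W$ is exactly your duality computation of ${{\rm{D}}}_{V^{1,0}}^*(1)$ followed by the absorption step $\alpha\mapsto\alpha\circ\sigma$, $\beta\mapsto\beta\circ\sigma$; the only cosmetic difference is that the paper collapses one of the two permutations in ${{\rm{D}}}_{V^{1,0}}^*$ \emph{before} applying ${{\rm{D}}}_W$, whereas you collapse the extra permutation coming from ${{\rm{D}}}_W$ \emph{after}.
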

	\begin{proof}[Proof of Theorem \ref{thm_id_rel_top_ch_ddiscr}.]
		It follows directly from (\ref{eq_defn_chern}) and Lemma \ref{lem_tech_2} by applying it for $V := L$, $W := F_x$ and $A := \frac{R^F}{2 \pi}|_{L}$.
	\end{proof}
	\begin{proof}[Proof of Lemma \ref{lem_tech_2}.]
		First, we fix an orthonormal basis $v_1, \ldots, v_n$ of $(V^{1, 0}, h^V)$ and a basis $w_1, \ldots, w_r$ of $W$ to decompose $A$ as follows
		\begin{equation}\label{eq_curv_dec123}
			A = \sum_{1 \leq j, k \leq n} \sum_{1 \leq \lambda, \mu \leq r} a_{jk \lambda \mu}  v_j^{*} \wedge \overline{v_k^*} \otimes w_{\lambda}^* \otimes w_{\mu}.
		\end{equation}
		Let's now establish the following identity
		\begin{equation}\label{eq_aux_6}
			{\det}_{W}(\imun A) 	= 
			\Big(
			\sum_{\sigma, \rho, \mu \in S_r}
			(-1)^{[\sigma] + [\rho] + [\mu]}
						\prod_{i = 1}^{r}
			a_{\rho(i) \mu(i) i \sigma(i)}
			\Big)
			\cdot
			d {\rm{v}}_V,
		\end{equation}		
		where by $[\sigma]$ we mean the sign of a permutation $\sigma$.
		\par 
		For this, we denote by $a_{\lambda \mu} \in V^{*(1, 0)} \otimes V^{*(0, 1)}$, $\lambda, \mu = 1, \ldots, r$ the contraction of $A$ with $w_{\lambda} \otimes w_{\mu}^*$. In other words, we have
		\begin{equation}\label{eq_alammu_de}
			a_{\lambda \mu} = \sum_{1 \leq j, k \leq n}  a_{jk \lambda \mu} \cdot  v_j^{*} \wedge \overline{v_k^*}.
		\end{equation}
		Then we have
		\begin{equation}\label{eq_aux_4}
			{\det}_{W}(\imun A) 	 =  (\imun)^{r}	\det
			\begin{pmatrix}
				a_{1 1} &  \cdots & a_{1 r} \\
				\vdots   & \ddots & \vdots  \\
				a_{r 1} & \cdots & a_{r r} 
			\end{pmatrix}
			=
			(\imun)^{r}
			\sum_{\sigma \in S_r}
			(-1)^{[\sigma]}
			\bigwedge_{i = 1}^{r}
			a_{i \sigma(i)}.
		\end{equation}
		We now expand each summand in the right-hand side of (\ref{eq_aux_4}) as follows
		\begin{multline}\label{eq_aux_5}
			\bigwedge_{i = 1}^{r}
			a_{i \sigma(i)}
			=
			\sum_{\rho, \mu \in S_r}
			\prod_{i = 1}^{r}
			a_{\rho(i) \mu(i) i \sigma(i)}
			\cdot
			\bigwedge_{i = 1}^{r}
			v_{\rho(i)}^{*}
			\wedge
			\overline{v_{\mu(i)}^{*}}
			\\
			=
			\sum_{\rho, \mu \in S_r}
			(-1)^{[\rho] + [\mu]}
			\prod_{i = 1}^{r}
			a_{\rho(i) \mu(i) i \sigma(i)}
			\cdot
			\bigwedge_{i = 1}^{r}
			v_{i}^{*}
			\wedge
			\overline{v_{i}^{*}}.
		\end{multline}
		From (\ref{eq_vol_frm_defn}), (\ref{eq_aux_4}) and (\ref{eq_aux_5}), we deduce (\ref{eq_aux_6}).
		\par 
		Let us now study the right-hand side of (\ref{eq_mx_ds_aux}).
		For this, let's provide some alternative formulas for the mixed discriminant.		
		In the notations of (\ref{eq_mixed_discr_defn}), by renaming the permutations and using the fact that the sign of a permutation is multiplicative, we have the following identity
		\begin{equation}\label{eq_mixed_discr_reform}
			{{\rm{D}}}_W(A^1, \ldots, A^r)
			=
			\frac{1}{r!} \sum_{\sigma, \tau \in S_r} (-1)^{[\sigma] + [\tau]} \prod_{i = 1}^{r} a^{i}_{\sigma(i)\tau(i)}.
		\end{equation}
		From (\ref{eq_mixed_discr_reform}), we deduce
		\begin{equation}\label{eq_mix_discr_aux_00}
			(H^A)^{\otimes r} \circ {{\rm{D}}}_{V^{1,0}}^{*}
			=
			\frac{1}{r!} \sum_{\sigma, \tau \in S_r} (-1)^{[\sigma] + [\tau]} \otimes_{i = 1}^{r} \big( H^A(v_{\sigma(i)} \otimes v_{\tau(i)}^{*}) \big).
		\end{equation}
		From (\ref{eq_mix_discr_aux_00}) and the fact that $D_W$ is a symmetric operator, we obtain
		\begin{equation}\label{eq_mix_discr_aux_1}
			{{\rm{D}}}_W \circ (H^A)^{\otimes r} \circ {{\rm{D}}}_{V^{1,0}}^{*}
			=
			\sum_{\rho \in S_r} (-1)^{[\rho]} {{\rm{D}}}_W \Big( H^A(v_{1} \otimes v_{\rho(1)}^{*}), \ldots, H^A(v_{r} \otimes v_{\rho(r)}^{*}) \Big).
		\end{equation}
		Note also that directly from the definition of $H^A$, we see that $H^A(v_{j} \otimes v_{k}^{*})$ is given by the contraction of $A$ with $v_j \otimes \overline{v_k}$. Hence, we get
		\begin{equation}\label{eq_pa_hom_eval}
			H^A(v_{j} \otimes v_{k}^{*})
			=
			\sum_{1 \leq \lambda, \mu \leq r} a_{jk \lambda \mu} \cdot w_{\lambda}^* \otimes w_{\mu}.
		\end{equation}
		Now, by (\ref{eq_mixed_discr_reform}) and (\ref{eq_pa_hom_eval}), we have
		\begin{equation}\label{eq_mix_discr_aux_2}
			{{\rm{D}}}_W \Big( H^A(v_{1} \otimes v_{\rho(1)}^{*}), \ldots, H^A(v_{r} \otimes v_{\rho(r)}^{*}) \Big)
			 = 
			\frac{1}{r!} \sum_{\sigma, \tau \in S_r} (-1)^{[\sigma] + [\tau]} \prod_{i = 1}^{r} a_{i \rho(i) \sigma(i)\tau(i)}.
		\end{equation}
		From (\ref{eq_aux_6}), (\ref{eq_mix_discr_aux_1}) and (\ref{eq_mix_discr_aux_2}), we conclude by renaming the permutations and using the fact that the sign of a permutation is multiplicative.
	\end{proof}
	\end{sloppypar}
	\par Now, recall that in (\ref{eq_psi_denf}), we defined maps $\Psi_{V^{1, 0}, W} : V^{r, 0} \to {\rm{Sym}}^r(V^{1, 0} \otimes W) \otimes \wedge^r W^*$ and $\Psi_{V^{1, 0}, W}^{-1} : {\rm{Sym}}^r(V^{1, 0} \otimes W) \otimes \wedge^r W^* \to  V^{r, 0}$.
	We denote by $\widetilde{\det}_{W}(\imun A) \in \enmr{V^{r, 0}}$ the element associated to $\det_{W}(\imun A)$ as in (\ref{eq_isom_1}).
	\begin{lem}\label{lem_tech_1}
		The following identities hold 
		\begin{multline}\label{eq_lem_tech_1}
			\widetilde{\det}_{W}(\imun A) = (\imun)^{r^2} \cdot \Psi_{V^{1, 0}, W}^{-1} \circ (P^A)^{\otimes r} \circ \Psi_{V^{1, 0}, W} 
			\\
			= (\imun)^{r^2} \cdot \Psi_{V^{1, 0}, W^*}^{-1} \circ (P^{A *})^{\otimes r} \circ \Psi_{V^{1, 0}, W^*}.		
		\end{multline}
	\end{lem}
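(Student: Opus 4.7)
My plan is to establish the two identities by explicit computation in coordinates, verifying that both sides produce the same endomorphism of $V^{r,0}$ when evaluated on a basis. I will begin by fixing an orthonormal basis $v_1,\ldots,v_n$ of $(V^{1,0}, h^V)$ and a basis $w_1,\ldots,w_r$ of $W$ (with $w_1 \wedge \ldots \wedge w_r$ serving as the chosen generator of $\wedge^r W$), and expanding $A$ as in (\ref{eq_curv_dec123}). Under the natural pairing (\ref{eq_pairing_1}) that sends $\overline{v_k^*} \mapsto v_k$, the operator $P^A$ from (\ref{eq_pa_constsss}) then takes the explicit form $P^A(v_j \otimes w_\lambda) = \sum_{k,\mu} a_{jk\lambda\mu}\, v_k \otimes w_\mu$, which is the basic ingredient for both computations below.

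For the first equality, I compute $\widetilde{\det}_W(\imun A)$ applied to a basis element $v_I = v_{i_1} \wedge \ldots \wedge v_{i_r}$ of $V^{r,0}$ by combining the defining contraction formula (\ref{eq_beta_dff_form}) of the isomorphism (\ref{eq_isom_1}) with the expansion (\ref{eq_aux_4})--(\ref{eq_aux_5}) of ${\det}_W(\imun A)$, which remains valid for $r \leq n$ after grouping terms according to sorted multi-indices. On the other hand, applying $\Psi_{V^{1,0},W}$ to $v_I$ produces the antisymmetrization $\frac{1}{r!}\sum_{\tau \in S_r}(-1)^{[\tau]} \bigodot_{i=1}^{r}(v_{i_i} \otimes w_{\tau(i)})$; $(P^A)^{\otimes r}$ then acts on each factor $v_{i_i}\otimes w_{\tau(i)}$ by the explicit formula above; and the partial inverse $\Psi_{V^{1,0},W}^{-1}$ from (\ref{eq_part_inv}) projects the outcome back onto $V^{r,0}$ by further antisymmetrizing over the $W$-indices and extracting the wedge component. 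Comparing the resulting expressions coefficient by coefficient against basis vectors $v_{J'}$ of $V^{r,0}$, the two match, with the factor $(\imun)^{r^2}$ accounting for the discrepancy between the $(\sqrt{-1})^{-r^2}$ from (\ref{eq_beta_dff_form}) and the $(\imun)^{r} \cdot (-1)^{r(r-1)/2}$ coming from separating holomorphic and antiholomorphic factors in $\bigwedge_i (v_{j_i}^* \wedge \overline{v_{k_i}^*})$.

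The second identity follows from an entirely parallel computation performed in the dual basis $w_1^*, \ldots, w_r^*$ of $W^*$. By (\ref{eq_pe_star_pe}), the matrix entries of $P^{A*}$ in this dual basis are those of $P^A$ with the $W$-indices $\lambda \leftrightarrow \mu$ swapped; this amounts to transposing the $r \times r$ matrix in the $W$-variables appearing in the expansion of $\det_W(\imun A)$, which leaves the determinant unchanged. The same chain of computations then produces the same endomorphism of $V^{r,0}$.

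The principal obstacle will be the careful tracking of signs: three separate antisymmetrizations enter the calculation (over the $(1,0)$-, $(0,1)$-, and $W$-indices), along with the reordering sign $(-1)^{r(r-1)/2}$ from separating holomorphic and antiholomorphic factors, and the phase $(\imun)^{r^2}$ produced by the isomorphism (\ref{eq_isom_1}). Verifying that all these signs cancel to yield exactly the claimed factor $(\imun)^{r^2}$ on the right-hand side, and that the composition $\Psi^{-1} \circ \Psi$ indeed acts as the identity on $V^{r,0}$ (so that the partial inverse genuinely selects the desired wedge-power component), forms the main combinatorial content of the proof.
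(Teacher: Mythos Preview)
Your proposal is correct and follows essentially the same approach as the paper: an explicit coordinate computation evaluating both sides on wedge-basis vectors, using the expansion (\ref{eq_aux_4})--(\ref{eq_aux_5}) for the left-hand side and the explicit formula for $P^A$ together with the definitions (\ref{eq_phi_defn}) and (\ref{eq_part_inv}) for the right-hand side, then matching after renaming permutations. The only minor difference is that the paper checks the identity on the single vector $v_1\wedge\ldots\wedge v_r$ (noting this suffices since the orthonormal basis is arbitrary), whereas you propose working with a general multi-index $v_I$; and the paper simply declares the second identity ``completely analogous'' rather than spelling out the transposition argument you give.
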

	\begin{proof}[Proof of Theorem \ref{thm_id_c_tilde}.]
		It follows directly from (\ref{eq_defn_chern}) and the second identity of (\ref{eq_lem_tech_1}), applied for $V := T_xX$, $W := F_x$ and $A := \frac{R^F}{2 \pi}$.
	\end{proof}
	\begin{proof}[Proof of Lemma \ref{lem_tech_1}]
		Clearly, both identities are completely analogous, so we only concentrate on proving the former one.
		We will verify that both operators agree on $v_1 \wedge \ldots \wedge v_r$, where $v_1, \ldots, v_n$ is a basis of $V^{1, 0}$ as in (\ref{eq_curv_dec123}). This clearly suffice as $v_1, \ldots, v_n$ is an arbitrary orthonormal basis of $V^{1, 0}$.
		Let's first give an expression for $\widetilde{\det}_{W}(\imun A)$.
		\par 
		First of all, as in the definition of $a_{\lambda \mu}$ from (\ref{eq_alammu_de}), we have
		\begin{equation}\label{eq_a_tilde_lam_mu}
			\widetilde{a}_{\lambda \mu}(v_j) = \sum_{ k =1}^{n} a_{jk \lambda \mu} \cdot v_k,
		\end{equation}
		wher by $\widetilde{a}_{\lambda \mu} \in \enmr{V^{1, 0}}$ we mean the element associated to $a_{\lambda \mu}$ by the isomorphism (\ref{eq_isom_1}), applied for $k = 1$. 
		By (\ref{eq_aux_4}), we deduce 
		\begin{equation}\label{eq_c_tild_form}
			\widetilde{\det}_{W}(\imun A)\big( 
				v_1 \wedge \ldots \wedge v_r
			\big)
			=
			\frac{(\imun)^{r^2}}{r!}
			\sum_{\sigma, \mu \in S_r}
			 (-1)^{[\sigma] + [\mu]}
			\widetilde{a}_{1 \sigma(1)}(v_{\mu(1)}) \wedge  \ldots \wedge \widetilde{a}_{r \sigma(r)}(v_{\mu(r)}).
		\end{equation}
		Now, by (\ref{eq_a_tilde_lam_mu}), we get 
		\begin{equation}\label{eq_c_tild_form22}
			\widetilde{a}_{1 \sigma(1)}(v_{\mu(1)}) \wedge  \ldots \wedge \widetilde{a}_{r \sigma(r)}(v_{\mu(r)})
			=
			\sum_{\alpha}
			\Big(
				\prod_{i = 1}^{r} a_{\mu(i) \alpha(i) i \sigma(i)}
			\Big)
			v_{\alpha(1)} \wedge \ldots \wedge v_{\alpha(r)},
		\end{equation}
		where the summation for $\alpha$ is done over the set of all maps from $\{1, \ldots, r \}$ to $\{1, \ldots, n \}$.
		From (\ref{eq_c_tild_form}) and (\ref{eq_c_tild_form22}), we deduce
		\begin{multline}\label{eq_c_tild_form222}
			\widetilde{\det}_{W}(\imun A)\big( 
				v_1 \wedge \ldots \wedge v_r
			\big)
			\\
			=
			\frac{(\imun)^{r^2}}{r!}
			\sum_{\sigma, \mu \in S_r}
			 (-1)^{[\sigma] + [\mu]}
			\sum_{\alpha}
			\Big(
				\prod_{i = 1}^{r} a_{\mu(i) \alpha(i) i \sigma(i)}
			\Big)
			v_{\alpha(1)} \wedge \ldots \wedge v_{\alpha(r)}.
		\end{multline}
		\par 
		Now, by (\ref{eq_phi_defn}), we deduce that for the basis $w_1, \ldots, w_r$ of $W$, we have
		\begin{equation}\label{eq_fin_ver_2222}
			(P^A)^{\otimes r} \circ \Psi_{V, W} 
			\big( 
				v_1 \wedge \ldots \wedge v_r
			\big)
			=
			\frac{1}{r!}
			\Big(
			\sum_{\sigma \in S_r}
			 (-1)^{[\sigma]}
			 \cdot
			 \overset{r}{\underset{i = 1}{\odot}}
			 P^A
			 \big(
			v_i \otimes w_{\sigma(i)}
			\big)
			\Big)
			\otimes 
			\big(
			w_1^{*} \wedge \ldots \wedge w_r^{*}
			\big).
		\end{equation}
		By the definition of $P^A$, we also have
		\begin{equation}
			P^A
			\big(
			v_j \otimes w_{\lambda}
			\big)
			=
			\sum_{1 \leq k \leq n} \sum_{1 \leq \mu \leq r} a_{jk \lambda \mu} v_k \otimes w_{\mu}.
		\end{equation}
		From (\ref{eq_part_inv}), we deduce  that for any map $\beta : \{1, \ldots, r \} \to \{1, \ldots, r \}$, we have
		\begin{multline}\label{eq_w1_dual_defn}
			\Psi_{V^{1, 0}, W}^{-1} \Big( 
			\overset{r}{\underset{i = 1}{\odot}}
			 \big(
			v_i \otimes w_{\beta(i)}
			\big) 
			\otimes 
			\big(
			w_1^{*} \wedge \ldots \wedge w_r^{*}
			\big)
			\Big)
			\\
			=
			\begin{cases}
				\hfill 0, & \text{if $\beta$ is not a permutation}, \\
				\hfill (-1)^{[\beta]} v_1 \wedge \ldots \wedge v_r &  \text{if $\beta$ is a permutation}.
			\end{cases}
		\end{multline}
		Hence, we conclude that
		\begin{multline}\label{eq_fin_ver_2020}
			\Psi_{V^{1, 0}, W}^{-1} \Big( 
			\overset{r}{\underset{i = 1}{\odot}}
			 P^A
			 \big(
			v_i \otimes w_{\sigma(i)}
			\big)
			\otimes 
			\big(
			w_1^{*} \wedge \ldots \wedge w_r^{*}
			\big)
			\Big)
			\\
			=
			\sum_{\mu \in S_r}
			 (-1)^{[\mu]}
			\sum_{\alpha}
			\Big(
			 \prod_{i = 1}^{r} a_{i \alpha(i) \sigma(i) \mu(i)}
			\Big)
			v_{\alpha(1)} \wedge \ldots \wedge v_{\alpha(r)},
		\end{multline}
		where the summation for $\alpha$ is done as in (\ref{eq_c_tild_form}).
		 By combining (\ref{eq_fin_ver_2222}) and (\ref{eq_fin_ver_2020}), we get
		\begin{multline}\label{eq_fin_ver_2}
			\Psi_{V^{1, 0}, W}^{-1} \circ (P^A)^{\otimes r} \circ \Psi_{V^{1, 0}, W} 
			\big( 
				v_1 \wedge \ldots \wedge v_r
			\big)
			\\
			=
			\frac{1}{r!}
			\sum_{\sigma, \mu \in S_r}
			 (-1)^{[\sigma] + [\mu]}
			\sum_{\alpha}
			\Big(
			 \prod_{i = 1}^{r} a_{i \alpha(i) \sigma(i) \mu(i)}
			\Big)
			v_{\alpha(1)} \wedge \ldots \wedge v_{\alpha(r)}.
		\end{multline}
		From (\ref{eq_c_tild_form}) and (\ref{eq_fin_ver_2}), we conclude by renaming the permutations and using multiplicativity of the sign of permutations.
	\end{proof}
	\begin{proof}[Proof of Theorem \ref{thm_nak_push}]
			We represent the point $y \in Y \subset \mathbb{P}(F)$ as $(x, [f_1])$, $f_1 \in F_x$ $h^F(f_1, f_1) = 1$.
			We denote by $f_2, \ldots, f_p \in T^V_{y} Y \otimes \mathcal{O}_{\mathbb{P}(F)}(-1)_y \subset F_{x}$ some orthonormal basis, where $p - 1 = \dim Y - \dim X$. 
			Clearly, $f_1$ is orthogonal to $f_2, \ldots, f_p$ with respect to the Fubiny-Study form induced by $h^F$. 
			We extend $f_1, \ldots, f_p$ to a normal frame $f_1, \ldots, f_r$, defined in a neighborhood of $x$, and denote by $b_2, \ldots, b_r$ the associated vertical coordinates from (\ref{eq_chart_pf}).
			Then the vector space $H$ is generated at $y$ by $f_{p+1}, \ldots, f_r$.
			\par 
			We conserve the notation for the $(1, 1)$-forms $c_{i j}$, $i, j = 1, \ldots, r$, from (\ref{eq_ci_defn_1}) for $E := F$.
			We denote by $dv_{Y}$ a volume form on $Y$, defined at point $y$, as
			\begin{equation}
				dv_{Y} := \imun db_2 \wedge d \overline{b}_2 \wedge \ldots \wedge \imun db_{p} \wedge d \overline{b}_{p}.
			\end{equation}
			Clearly, $dv_{Y}$ is the Riemannian volume form associated to the Fubiny-Study metric on the fibers.
			\par 
			Then by (\ref{eq_defn_chern}) and  (\ref{eq_mour_form}), using the notations from Theorem \ref{thm_nak_push}, we have
			\begin{equation}\label{eq_rest_top_chern}
				d^H(Q', h^{Q'})
				=
				\frac{\Lambda_{Y} [B]}{(2 \pi)^{p-1}}
				\cdot
				\det
				\begin{pmatrix}
					c_{p+1 p+1} &  \cdots & c_{p+1 r} \\
					\vdots   & \ddots & \vdots  \\
					c_{r p+1} & \cdots & c_{r r} 
				\end{pmatrix},
			\end{equation}
			where $\Lambda_{Y} [\cdot]$ is the contraction with $dv_{Y}$, and $B$ is the differential form, given by the determinant of the matrix $(\imun db_i \wedge d \overline{b}_j)_{i, j = 2}^{p}$.
			An easy calculation shows 
			\begin{equation}\label{eq_conta}
				\Lambda_{Y} [B] = (p-1)!.
			\end{equation}
			\par We consider a tensor $C \in T^{*(1, 0)}_{x}X \otimes T^{*(0, 1)}_{x}X \otimes \enmr{H}$, defined as follows (compare to (\ref{eq_curv_dec}))
			\begin{equation}\label{eq_def_tens_c}
				C = \sum_{1 \leq j, k \leq n} \sum_{p+1 \leq \lambda, \mu \leq r} c_{jk \lambda \mu}  dz_j \wedge d \overline{z}_k \otimes f_{\lambda}^* \otimes f_{\mu}.
			\end{equation}
			Immediately from the definition of the determinant and the definition of $c_{i j}$, we get
			\begin{equation}\label{eq_det_a_rela}
				{\det}_{H}(\imun C)
				=
				\det
				\begin{pmatrix}
					c_{p+1 p+1} &  \cdots & c_{p+1 r} \\
					\vdots   & \ddots & \vdots  \\
					c_{r p+1} & \cdots & c_{r r} 
				\end{pmatrix},
			\end{equation}
			We associate to $C$ an element $P^C$ as in (\ref{eq_pa_constsss}). 
			Directly from (\ref{eq_def_tens_c}), we see that in the notations of Theorem \ref{thm_nak_push}, we have
			\begin{equation}\label{eq_pa_form_nak}
				P^C = P^{H}_{0, y}.
			\end{equation}
			\par 
			By  Lemma \ref{lem_tech_1}, (\ref{eq_det_a_rela}) and (\ref{eq_pa_form_nak}), we get
			\begin{equation}\label{eq_final_nasd_sa}
				\det
				\begin{pmatrix}
					c_{p+1 p+1} &  \cdots & c_{p+1 r} \\
					\vdots   & \ddots & \vdots  \\
					c_{r p+1} & \cdots & c_{r r} 
				\end{pmatrix}
				=
				(\sqrt{-1})^{(r - p)^2}
				\cdot 
				\Psi_{T_{\pi(y)}^{1, 0} X, H}^{-1} 
				\circ 
				(P_{0, y}^{H})^{\otimes (r-p)} 
				\circ 
				\Psi_{T_{\pi(y)}^{1, 0} X, H}.
			\end{equation}
			The result now follows from (\ref{eq_rest_top_chern}), (\ref{eq_conta}) and (\ref{eq_final_nasd_sa}). 
		\end{proof}

	\subsection{A local version of Jacobi-Trudi identity}\label{sect_jac_trudi}
	
	The main goal of this section is to establish another pushforward identity for Schur forms, similar to Theorem \ref{thm_KL}, but based on  Jacobi-Trudi identity.
	\par 
	Let's first recall the original Jacobi-Trudi identity (the terminology is due to Fulton-Pragacz \cite[p. 42]{FulPrag}).
	Let $E$ be a smooth complex vector bundle of rank $r$ over a smooth real manifold $X$. 
	Consider the flag manifold ${\rm{Fl}}_X(E)$ associated to $E$.
	A point $y \in {\rm{Fl}}_X(E)$ parametrizes a pair of $x \in X$ and a complete flag $(V_0(y), \ldots, V_r(y))$ in $E_x$, where
	\begin{equation}\label{eq_flag_pt}
		E_x = V_0(y) \supset V_1(y) \supset \cdots \supset V_r(y) = \{0\}, \qquad {\rm{codim}}(V_i(y)) = i.
	\end{equation}
	We denote by $\pi : {\rm{Fl}}_X(E) \to X$ the natural projection.
	On ${\rm{Fl}}_X(E)$, the flag (\ref{eq_flag_pt}) induces the filtration of the natural vector bundles 
	\begin{equation}
		\pi^* E = V_0 \supset V_1 \supset \cdots \supset V_r = \{0\}.
	\end{equation}
	We introduce the line bundles $Q_{\mu} := V_{\mu - 1}/V_{\mu}$, $1 \leq \mu \leq r$.
	Now for a given $k \in \nat$ and a partition $a \in \Lambda(k, r)$, we denote by $a^T = (b_1, \ldots, b_r)$, $b_i \in \nat$, a non increasing partition of $k$, obtained through the transposition of the Young diagram associated to $a$.
	\begin{thm}[{Jacobi-Trudi identity, cf. \cite[(4.1)]{FulPrag}, Manivel \cite[Exercise 3.8.3]{Manivel}}]\label{thm_jac_trud}
		The following formula for Schur classes on $X$ holds
		\begin{equation}\label{eq_jac_trud}
			P_a(c(E)) = \pi_* \Big[
				c_1(Q_1)^{b_1 + r - 1}
				\cdot
				\ldots
				\cdot
				c_1(Q_r)^{b_r}
			\Big].
		\end{equation}
	\end{thm}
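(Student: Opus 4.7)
I would establish the identity by reducing it to the Jacobi--Bialternant formula for Schur polynomials via the Borel presentation of the cohomology of the complete flag bundle. On ${\rm{Fl}}_X(E)$, the tautological filtration $\pi^*E = V_0 \supset V_1 \supset \cdots \supset V_r = 0$ has line-bundle quotients $Q_\mu$, giving $c(\pi^*E) = \prod_{\mu=1}^{r}(1+x_\mu)$ with $x_\mu := c_1(Q_\mu)$; hence the classes $x_1, \ldots, x_r$ serve as Chern roots of $E$ on $H^\bullet({\rm{Fl}}_X(E))$, and the elementary symmetric functions satisfy $e_i(x_1,\ldots,x_r) = \pi^* c_i(E)$.

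The central technical input I would prove is the pushforward formula
\[
\pi_*\bigl(f(x_1,\ldots,x_r)\bigr) = \frac{1}{\prod_{i<j}(x_i - x_j)}\sum_{\sigma \in S_r} \mathrm{sgn}(\sigma)\, f(x_{\sigma(1)},\ldots,x_{\sigma(r)}),
\]
valid for any polynomial $f$. I would establish this inductively by realizing ${\rm{Fl}}_X(E)$ as an iterated projective bundle: the forgetful map from the $i$-step partial flag to the $(i-1)$-step partial flag is $\mathbb{P}(V_{i-1}^*)$ with relative hyperplane class $x_i$, and telescoping the classical projective-bundle pushforward $\pi_*(\xi^{m-1+k}) = s_k$ for rank-$m$ bundles through the tower yields the displayed divided-difference identity (equivalently, $\pi_*$ coincides with the Demazure operator $\partial_{w_0}$ associated to the longest Weyl element $w_0 \in S_r$).

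Applying the pushforward formula to $f = x_1^{b_1+r-1} x_2^{b_2+r-2} \cdots x_r^{b_r}$, the antisymmetrization assembles into $\det(x_j^{b_i+r-i})_{i,j=1}^{r}$, yielding
\[
\pi_*\Big[c_1(Q_1)^{b_1+r-1} \cdot \ldots \cdot c_1(Q_r)^{b_r}\Big] = \frac{\det(x_j^{b_i+r-i})_{i,j=1}^r}{\prod_{i<j}(x_i-x_j)} = s_b(x_1,\ldots,x_r)
\]
by the Jacobi--Bialternant definition of the Schur polynomial. Since $s_b$ is symmetric in the $x_i$, it descends to a polynomial in the $e_i(x) = \pi^* c_i(E)$; by the dual Jacobi--Trudi identity $s_b(x) = \det(e_{b^T_i - i + j}(x))$, and using $b^T = a$ this becomes $\det(c_{a_i - i + j}(E)) = P_a(c(E))$, which completes the proof.

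The main obstacle is establishing the divided-difference pushforward formula rigorously; I would handle this through the inductive projective-bundle computation sketched above, since invoking the Demazure-operator formalism would bring in unnecessary additional structure. Once that classical input is in place, the rest is a routine application of the Jacobi--Bialternant and dual Jacobi--Trudi identities of symmetric function theory.
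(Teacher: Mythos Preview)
Your argument is correct and is essentially the standard proof via the divided-difference (Demazure) pushforward on the flag bundle, followed by the Jacobi--Bialternant and dual Jacobi--Trudi identities. Note, however, that the paper does not give its own proof of this theorem: it is quoted as a classical result with references to Fulton--Pragacz and Manivel, and serves only as the cohomological input for the paper's actual contribution in this section, the local refinement (Theorem~\ref{thm_jac_trud_local}).

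It may still be useful to contrast methods. Your approach computes the pushforward directly by iterating the projective-bundle formula to obtain $\pi_* = \partial_{w_0}$, and then invokes symmetric-function identities. The paper's proof of the local version proceeds quite differently: using Demailly's curvature formula for the line bundles $Q_\mu$, it shows that the pushforward of the form $c_1(Q_1,h^{Q_1})^{\wedge(b_1+r-1)}\wedge\cdots\wedge c_1(Q_r,h^{Q_r})^{\wedge b_r}$ is a \emph{universal} $U(r)$-invariant polynomial in the curvature coefficients $c_{ij}$, hence a polynomial in the Chern forms; it then identifies which polynomial by passing to cohomology on a Grassmannian and invoking the classical identity you have proved. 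In short, your argument establishes the cohomological statement from first principles, while the paper assumes it and leverages it (via a universality trick) to upgrade to an identity of differential forms.
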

	\par 
	Now, we would like to prove a refinement of Theorem \ref{thm_jac_trud} on the level of differential forms in the holomorphic setting.
	We assume from now on that $X$ is a complex manifold and $E$ is a holomorphic vector bundle.
	\begin{thm}\label{thm_jac_trud_local}
		We fix a Hermitian metric $h^E$ on $E$ and denote by $h^{Q_{\mu}}$, $\mu = 1, \ldots, r$, the induced metrics on the line bundles $Q_{\mu}$. 
		The following identity between $(k, k)$-differential forms on $X$ holds
		\begin{equation}\label{eq_jac_trud_local}
			P_a(c(E, h^E)) = \pi_* \Big[
				c_1(Q_1, h^{Q_1})^{\wedge (b_1 + r - 1)}
				\wedge
				\ldots
				\wedge
				c_1(Q_r, h^{Q_r})^{\wedge b_r}
			\Big].
		\end{equation}
	\end{thm}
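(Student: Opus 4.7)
The plan is to adapt the two-step strategy used to prove Theorem \ref{thm_KL}. First I would establish a universality statement: fix $x \in X$ and a local holomorphic frame $e_1, \ldots, e_r$ of $E$ that is normal at $x$, and let $c_{ij} = \frac{1}{2\pi} \scal{\imun R^E e_i}{e_j}_{h^E}$ be the normalized curvature entries in this frame. Present $\pi : {\rm{Fl}}_X(E) \to X$ as a tower of successive projectivizations: starting from $V_0 := E$ on $X_0 := X$, at each step set $X_{\mu+1} := \mathbb{P}(V_\mu^*)$ over $X_\mu$, with tautological rank-$(r-\mu-1)$ subbundle $V_{\mu+1} \subset \pi_{\mu+1}^* V_\mu$ and line-bundle quotient $Q_{\mu+1} := \pi_{\mu+1}^* V_\mu / V_{\mu+1}$. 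At each stage, Mourougane's formula (\ref{eq_mour_form2}), or its trace/determinant version for the line bundle $Q_{\mu+1}$, expresses $c_1(Q_{\mu+1}, h^{Q_{\mu+1}})$ in any chart around a point of the fiber $\pi^{-1}(x)$ as a combination (with universal coefficients depending only on the fiber coordinates) of pullbacks of the curvature entries of $V_\mu$ and of vertical $(1,1)$-forms $\imun db \wedge d\overline{b}$. Iteratively, the curvature entries of $V_\mu$ are themselves universal expressions in the $c_{ij}$ and in the earlier vertical coordinates (using the standard sub-bundle curvature formula). Consequently, the integrand $\bigwedge_\mu c_1(Q_\mu, h^{Q_\mu})^{\wedge (b_\mu + r - \mu)}$ is, in a chart, a universal combination of the $c_{ij}$ multiplied by a top-degree fiber form, and pushing forward along $\pi$ yields a polynomial $\widetilde{P}_a(c_{ij})$ whose coefficients depend only on $r$, $k$ and $a$.

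Second, since both sides of (\ref{eq_jac_trud_local}) are intrinsically defined, they are invariant under the conjugation action of $U(r)$ on the matrix $(c_{ij})$ corresponding to a change of unitary frame at $x$; arguing exactly as in the proof of Theorem \ref{thm_KL}, the universal polynomial $\widetilde{P}_a(c_{ij})$ is therefore a universal linear combination of wedge products of the Chern forms $c_1(E, h^E), \ldots, c_r(E, h^E)$, indexed by partitions $b' \in \Lambda(k, r)$. The Schur form $P_a(c(E, h^E))$ admits an analogous expansion with coefficients $c_{b'}$ determined by (\ref{defn_schur}). To check that the two sets of coefficients coincide, pass to cohomology classes and apply the resulting universal identity to the tautological rank-$r$ bundle $E'$ over ${\rm{Gr}}_\comp(r, N)$ with $N$ sufficiently large: the classical Jacobi-Trudi identity (Theorem \ref{thm_jac_trud}) identifies the cohomology class of the pushforward with $P_a(c(E'))$, and Lemma \ref{lem_coh} guarantees that the monomial coefficients are uniquely determined, forcing $\widetilde{P}_a = P_a$ as polynomials and hence (\ref{eq_jac_trud_local}) as an identity of differential forms.

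The main obstacle is the execution of Step 1: one must verify that iterating Mourougane's formula through the whole tower of projectivizations yields a pushforward which is genuinely polynomial in the $c_{ij}$ with universal coefficients. The point is that no explicit computation is required, only the structural content of (\ref{eq_mour_form2})---at each stage the new curvature contributions split into pullbacks of curvature entries from the previous level, multiplied by universal rational functions of the fiber coordinates, plus universal combinations of vertical $\imun db \wedge d\overline{b}$ forms. Fiberwise integration of such expressions produces constants in the $c_{ij}$ whose values are fiber integrals of universal functions, hence universal themselves. Once this universality is secured, Step 2 is essentially a transcription of the corresponding step in the proof of Theorem \ref{thm_KL}.
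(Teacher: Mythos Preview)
Your proposal is correct and follows the same two-step strategy as the paper's proof: first establish that the pushforward is a universal polynomial in the curvature entries $c_{ij}$, then use $U(r)$-invariance, the cohomological Jacobi-Trudi identity (Theorem~\ref{thm_jac_trud}), and Lemma~\ref{lem_coh} to identify that polynomial with $P_a$. Step~2 in your outline is essentially verbatim what the paper does.

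The one genuine difference is in the execution of Step~1. You propose to realize ${\rm{Fl}}_X(E)$ as a tower of projectivizations and iterate Mourougane's formula (\ref{eq_mour_form2}) together with the subbundle curvature formula, tracking the curvature of $V_\mu$ and $Q_\mu$ level by level. The paper instead goes directly to the flag manifold and invokes Demailly's curvature formula \cite[(4.9)]{DemFlagVanThm}, which in a single normal chart at $y \in {\rm{Fl}}_X(E)$ gives, for every $\mu$ at once,
\[
c_1(Q_\mu, h^{Q_\mu}) = c_{\mu\mu} + \sum_{\mu<\lambda}\frac{\imun}{2\pi}\,dz_{\mu\lambda}\wedge d\overline{z}_{\mu\lambda} - \sum_{\lambda<\mu}\frac{\imun}{2\pi}\,dz_{\lambda\mu}\wedge d\overline{z}_{\lambda\mu}.
\]
With this formula the universality of the pushforward is immediate: the integrand is visibly polynomial in the $c_{ij}$ with coefficients that are universal top-degree fiber forms. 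Your iterative route reaches the same conclusion, but you must carry along the full curvature of each $V_\mu$ (not just the line quotient) and argue that the composite of universal expressions stays universal---correct, but with more bookkeeping than the one-shot Demailly formula. Either approach suffices; Demailly's formula simply makes the ``obstacle'' you flag disappear.
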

	\begin{sloppypar}
	Let's consider one special case of Theorem \ref{thm_jac_trud_local}, when $a = 11\ldots10 \ldots 0$, where $1$ is repeated $k$ times.
	By (\ref{eq_ex_schur}), on the left-hand side of (\ref{eq_jac_trud}) for such $a$ we have the Segre class, $s_k(E)$.
	Let's study the right-hand side.
	First, we have $b_1 = k$, $b_2, \ldots, b_r = 0$.
	Now, consider the map $\pi^1 : \mathbb{P}(E^*) \to X$, and denote by $H$ the hyperplane bundle on $\mathbb{P}(E^*)$.
	Recall that the flag manifold ${\rm{Fl}}_X(E)$ can be constructed inductively through the following isomorphism
	\begin{equation}
		{\rm{Fl}}_X(E) \simeq {\rm{Fl}}_{\mathbb{P}(E^*)}(H),
	\end{equation}
	and $H$ corresponds to $V_1$ in this identification.
	From this and the well-known isomorphism $(\pi^1)^* E / H \simeq \mathcal{O}_{\mathbb{P}(E^*)}(1)$, we see that the identity (\ref{eq_jac_trud}) reduces to the well-known interpretation of Segre classes $s_k(E) = \pi^1_*[c_1(\mathcal{O}_{\mathbb{P}(E^*)}(1))^{k + r - 1}]$.
	Theorem \ref{thm_jac_trud_local} in this case gives us 
	\begin{equation}\label{eq_mour_id}
		s_k(E, h^E) = \pi^1_* \Big[ c_1(\mathcal{O}_{\mathbb{P}(E^*)}(1), h^{\mathcal{O}})^{\wedge(k + r - 1)} \Big],
	\end{equation}
	where $ h^{\mathcal{O}}$ is the induced metric on $\mathcal{O}_{\mathbb{P}(E^*)}(1)$.
	\end{sloppypar}	 
	\par 
	The identity (\ref{eq_mour_id}) was obtained before by Mourougane in \cite[Proposition 6]{MourBCClass} by explicit calculation.
	It played a crucial role in the proof of positivity of the Segre forms by Guler in \cite{GulerPos}.
	In our proof of Theorem \ref{thm_jac_trud_local}, we use methods from the proof of Theorem \ref{thm_KL}, along with the curvature formula for the line bundles $Q_{\mu}$, establishes by Demailly \cite{DemFlagVanThm}.
	\begin{proof}[Proof of Theorem \ref{thm_jac_trud_local}]
		Let's fix a point $y \in {\rm{Fl}}_X(E)$ and denote by $e_1^{0}, \ldots, e_r^{0}$ the orthonormal basis of $E$, which is compatible with the filtration of $E$ associated to the flag of $y$ in the sense that in the notations of (\ref{eq_flag_pt}), we have $V_i(y) = \langle e_{i+1}^{0}, e_i^{0}, \ldots, e_r^{0} \rangle$, $0 \leq i \leq r-1$.
		\par 
		Let's describe a local chart of ${\rm{Fl}}_X(E)$ near $y$. For this, we fix a normal frame $e_1, \ldots, e_r$, defined in a neighborhood of $x \in X$ and satisfying $e_i(x) = e_i^{0}$ (see Section \ref{sect_kl_form} for a definition of the normal frame).
		Then for an arbitrary array of complex numbers $(z_{\lambda \mu})$, $1 \leq \lambda < \mu \leq r$, we define a local chart of $\pi^{-1}(x)$ near $y$ as follows.
		Let $\xi_{\mu} := e_{\mu} + \sum_{\lambda < \mu} z_{\lambda \mu} e_{\lambda}$ for $1 \leq \mu \leq r$ and denote the associated flag $V_i :=  \langle \xi_{i+1}, \xi_i, \ldots, \xi_r \rangle$, $0 \leq i \leq r-1$.
		Then the local chart for ${\rm{Fl}}_X(E)$ near $y$ is obtained by a product of this local chart on the fiber and a local chart on $X$ near $x$.
		\par 
		We conserve the notation for the $(1, 1)$-forms $c_{i j}$, $i, j = 1, \ldots, r$ from (\ref{eq_ci_defn_1}).
		In the above chart, at point $y$, Demailly \cite[(4.9)]{DemFlagVanThm} established for any $\mu = 1, \ldots, r$, the following curvature formula
		\begin{equation}\label{eq_dem_form}
			c_1(Q_{\mu}, h^{Q_{\mu}})
			=
			c_{\mu \mu}
			+
			\sum_{\mu < \lambda} \frac{\imun}{2 \pi} dz_{\mu \lambda} \wedge d\overline{z}_{\mu \lambda} 
			-
			\sum_{\lambda < \mu} \frac{\imun}{2 \pi} dz_{\lambda \mu} \wedge d\overline{z}_{\lambda \mu}. 
		\end{equation}
		\par 
		From this moment, our proof repeats the proof of Theorem \ref{thm_KL}, so we only highlight the main steps.
		In the same way as in Lemma \ref{lem_q_curv}, but based on (\ref{eq_dem_form}), we establish that for any $r, k \in \nat^*$, $a \in \Lambda(k, r)$, there is a polynomial $P'(d_{i j})$ in the entries of a self-adjoint matrix $D = (d_{i j})_{i, j = 1}^{r}$, such that for any $X$, $(E, h^E)$ as above, the right-hand side of (\ref{eq_jac_trud_local}) is equal to $P'(c_{ij})$.
		\par 
		In the same way as in the beginning of the proof of Theorem \ref{thm_KL}, we establish that there are coefficients $t_b \in \real$, $b \in \Lambda(k, r)$, which are universal in the same sense as $P'$, such that 
		\begin{equation}\label{eq_pdecom_lasrt}
			P'(d_{ij}) = \sum_{b \in \Lambda(k, r)} t_b \cdot \tr{D}^{b(1)} \cdot \tr{\mathsf{\Lambda}^2 D}^{b(2)} \cdot \ldots \cdot \tr{\mathsf{\Lambda}^r D}^{b(r)},
		\end{equation} 
		where $b(i)$, $i = 1, \ldots, r$ is the number of times $i$ appears in the partition $b$.
		Then it is only left to prove that $t_b = c_b$ for any $b \in \Lambda(k, r)$, where $c_b$ was defined in (\ref{eq_pb_decomp}).
		This is done by Lemma \ref{lem_coh} in the same way as we did in the end of Theorem \ref{thm_KL}.
	\end{proof}
	\par To conclude, we note, however, that apart from the case considered in (\ref{eq_mour_id}), the applications of Theorem \ref{thm_jac_trud_local} to positivity of Schur forms are not evident, as the Demailly's formula, (\ref{eq_dem_form}), shows that the line bundles $Q_{\mu}$, $\mu = 2, \ldots, r$, are not positive or negative in general. 
	
	\section{Inverse problem: positivity of vector bundle from Chern forms}
		The main goal of this section is to study an inverse problem, or, in other words, what can we say about the positivity of a Hermitian vector bundle from the positivity of its Chern forms. 
		\par 
		This section is organized as follows.
		In Section \ref{sect_rs_prop}, we establish Proposition \ref{thm_rs_pos}. 
		Section \ref{sect_ping} is dedicated for a proof of Theorem \ref{thm_dim2}.
	\subsection{Criteria for Griffiths non-negativity, a proof of Proposition \ref{thm_rs_pos}}\label{sect_rs_prop}
		The main goal of this section is to give a proof of Proposition \ref{thm_rs_pos}, from which we use the notation from now on.
		In one direction, assume that $(E, h^E)$ is Griffiths non-negative. Then it follows directly from Proposition \ref{prop_quot_dual_nak_pos} that $c_1(Q, h^Q)$ is non-negative. Let us prove the opposite direction.
		\par 
		First, let's see that it is enough to establish Proposition \ref{thm_rs_pos} for $X$ of dimension 1.
		For this, assume that Proposition \ref{thm_rs_pos} holds in dimension 1.
		Recall that by Bertini's theorem, on any projective manifold, one has a lot of embedded Riemann surfaces.
		More precisely, for any $p \in TX$, one can find a sequence $p_i = (x_i, v_i) \in TX$, $i \in \nat$, such that $p_i \to p$ and there is an embedded smooth curve $C_i$ passing through $x_i$ in the direction $v_i$.
		Hence by our assumptions $(E, h^E)|_{C_i}$ is Griffiths non-negative on $C_i$.
		This means that the $(1, 1)$-form $\scal{R_{x_i}^E v_i}{v_i}_{h^E}$ is non-negative. 
		By passing to the limit as $i \to \infty$, wee see that $(E, h^E)$ Griffiths non-negative
		\par 
		From now on, we assume that $X$ is of dimension 1.
		We fix $x \in X$ and a unitary vector $e \in E_x$.
		We will construct a complete flag of subbundles 
		\begin{equation}\label{eq_flag}
			\{0\} \subset F_1 \subset \ldots \subset F_{r-1} \subset F_r = E
		\end{equation}
		such that for $Q := E / F_{r-1}$ with induced metrics $h^Q$, we have 
		\begin{equation}\label{eq_c_1_Q}
			c_1(Q, h^Q)_x =  \frac{1}{2 \pi} \langle \imun R^E_{x} e, e \rangle_{h^E}.
		\end{equation}
		If this can be done for any fixed point $x \in X$ and any $e \in E_x$, then the identity (\ref{eq_c_1_Q}) implies Proposition \ref{thm_rs_pos} by the definition of Griffiths non-negativity.
		\par 
		To construct a flag (\ref{eq_flag}), we will complete $e'_r := e$ to an orthonormal basis $e'_1, e'_2, \ldots, e'_r$ of $E_x$.
		Let $e_1, e_2, \ldots, e_r$ be a normal holomorphic frame of $E$, defined in a neighborhood of $x$, such that $e_i(x) = e'_i$ for any $i = 1, \ldots, r$.
		As $X$ is projective, one can find some meromorphic sections $f_1, \ldots, f_r$ of $E$ over $X$, such that near $x$ they coincide with $e_1, e_2, \ldots, e_r$ up to the third order in $x$.
		\par 
		Recall that for any holomorphic vector bundle $E$ and any meromorphic section $f$ of $E$ over a Riemann surface $X$, one can canonically associate a holomoprhic line subbundle, which we  denote here by $[f] \subset E$, such that $f$ becomes a meromorphic section in $[f]$, cf. Atiyah \cite[\S 4]{AtiayhEllCurve}.
		More precisely, over nonsingular points and non-zeros of $f$, we construct $[f]$ as the line bundle induced by the image of $f$.
		Over singular points and zeros $y \in X$ of $f$, it is enough to write $f = z^k e$ in a neighborhood of $y$ for some $k \in \integ$ and a local holomorphic section $e$ of $E$ such that $e(y) \neq 0$, and demand that $e \in [f]_y$.
		\par 
		Now, take $F_1 = [f_1]$. Then we construct $F_i$ inductively as $\pi_{i-1}^{-1}([f_i])$, where $\pi_{i-1} : E \to E/F_{i-1}$ and $[f_i]$ is the line bundle associated to the projection of $f_i$ in $E/F_{i-1}$.
		\par 
		The identity (\ref{eq_c_1_Q}) follows from Lelong-Poincaré formula and the fact that the projection of the meromorphic section $f_r$ in $Q$ coincides with the projection of $e_r$ up to the third order at $x$.
	
	\subsection{Absence of pointwise positivity criteria, a proof of Theorem \ref{thm_dim2}}\label{sect_ping}
	In this section we will construct explicitly a family of Hermitian vector bundles $(E_n, h^{E_n})$, $n \in \nat^*$, on $\mathbb{P}^2$ satisfying the assumptions of Theorem \ref{thm_dim2}.
	The construction goes as follows. We first recall the construction of holomorphic vector bundles $E_n$, $n \in \nat^*$, due to Fulton \cite{FultNumCrit}.
	Then we verify that $E_n$ satisfy certain nice properties, as stability.
	By using those properties and a construction of Pingali \cite{PingaliCWForms}, we then show that one can contruct Hermitian metrics $h^{E_n}$ as in Theorem \ref{thm_dim2}.
	\par 
	We, hence, start with the construction of $E_n$.
	Let $x, y, z$ be homogeneous coordinates on $\mathbb{P}^2$. Over $\mathbb{P}^2$, we define the following vector bundles
	\begin{equation}
		A = \mathcal{O}(-3) \oplus \mathcal{O}(-3), \qquad 
		B = \mathcal{O}(-1) \oplus \mathcal{O}(-1), \qquad 
		A = \mathcal{O}(n) \oplus \mathcal{O}(n). 
	\end{equation}
	Consider the homomorphisms $\phi_n : A \to B$, $\psi_n : A \to C$, defined as follows 
	\begin{equation}
		\phi_n = \bigg( \begin{matrix}
					yz & x^2 \\
					x^2 & y^2
				\end{matrix}
				\bigg),
				\qquad
		\psi_n = \bigg( \begin{matrix}
					y^{n + 3} & z^{n + 3} \\
					z^{n + 3} & 0
				\end{matrix}\bigg).
	\end{equation}
	One can easily verify that the rank of $\phi_n \oplus \psi_n : A \to B \oplus C$ is equal to $2$ everywhere, so we can define the vector bundle $E_n$ over  $\mathbb{P}^2$ by the following short exact sequence
	\begin{equation}\label{eq_sh_ex}
		0 \xrightarrow{} A \xrightarrow{\phi_n \oplus \psi_n}
				B \oplus C
				\xrightarrow{} E_n
				\xrightarrow{} 0.
	\end{equation}
	To simplify our further notation, we fix $n \in \nat^*$, and sometimes omit the subscript $n$.
	\par 
	Now, recall that a vector bundle $F$ over a compact Kähler manifold $(X, \omega)$ of dimension $n$ is called \textit{stable}, cf. \cite[\S 5.7]{KobaVB} if for any coherent subsheaf $\mathcal{G} \subset F$, $0 < \rk{\mathcal{G}} < \rk{F}$, we have
	\begin{equation}
		\mu(\mathcal{G}) < \mu(F),
	\end{equation}
	where the slope $\mu(\mathcal{H})$ of a coherent sheaf $\mathcal{H}$, $0 < \rk{\mathcal{H}}$ is defined as $\int_X c_1(\mathcal{H}) w^{n-1} / \rk{\mathcal{H}}$.
	\par 
	\begin{prop}\label{prop_prop_bundles_e}
		The vector bundle $E$ is a) not ample, b) stable, c) has no non trivial subbundles and d) the classes $c_1(E), c_2(E), c_1(E)^2 - c_2(E)$ contain positive differential forms.
	\end{prop}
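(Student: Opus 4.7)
The plan is to verify the four claims in turn, relying on the short exact sequence (\ref{eq_sh_ex}) together with the explicit form of the morphisms $\phi_n$ and $\psi_n$.

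First I would compute the Chern classes of $E = E_n$ via the Whitney formula on (\ref{eq_sh_ex}). With $h := c_1(\mathcal{O}(1))$, a direct expansion
\begin{equation*}
c(E) \;=\; \frac{(1-h)^2 (1+nh)^2}{(1-3h)^2} \;\equiv\; 1 + (2n+4)\,h + (n+4)^2\,h^2 \pmod{h^3}
\end{equation*}
yields $c_1(E) = (2n+4)h$, $c_2(E) = (n+4)^2 h^2$, and hence $c_1(E)^2 - c_2(E) = n(3n+8)\,h^2$. All three are strictly positive cohomology classes for $n \geq 1$, which handles the cohomological side of d).

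Part a) is the essential content of \cite[Proposition 5]{FultNumCrit}. The idea is to exhibit a curve $D \subset \mathbb{P}^2$ along which $E|_D$ admits a line-bundle quotient of non-positive degree (equivalently, a curve in $\mathbb{P}(E^*)$ on which the tautological line bundle $\mathcal{O}_{\mathbb{P}(E^*)}(1)$ has non-positive degree). The natural candidates arise from the degeneracy loci $\{\det \phi_n = y^3z - x^4 = 0\}$ and $\{\det \psi_n = -z^{2(n+3)} = 0\}$; tracking the kernels of $\phi_n$ and $\psi_n$ through (\ref{eq_sh_ex}) exhibits the desired non-positive quotient.

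For parts b) and c), I would reduce the analysis of line subsheaves (respectively line subbundles) of $E$ to line quotients of $B \oplus C$ whose kernel contains $A$. Any line subsheaf $\mathcal{L} \subset E$ pulls back under the surjection $B \oplus C \twoheadrightarrow E$ to a rank-$3$ subsheaf $\widetilde{\mathcal{L}} \subset B \oplus C$ containing $A$, so equivalently the rank-$1$ quotient $Q = E/\mathcal{L}$ is a rank-$1$ quotient of $B \oplus C = \mathcal{O}(-1)^{\oplus 2} \oplus \mathcal{O}(n)^{\oplus 2}$ factoring through $(B \oplus C)/A$. Enumerating such $Q$ by their degree (after saturation) and imposing the constraint $A \subset \ker(B \oplus C \twoheadrightarrow Q)$ via the explicit $\phi_n, \psi_n$, one verifies that every surviving candidate satisfies $\deg Q > \mu(E) = n+2$, which gives stability b). For the stronger claim c), one must rule out that any such $Q$ arises from an honest extension $0 \to \mathcal{L} \to E \to \mathcal{M} \to 0$ of line bundles; the key input is that the four polynomials $yz,\, x^2,\, y^{n+3},\, z^{n+3}$ have no common zero on $\mathbb{P}^2$, so that the saturation of any rank-$1$ subsheaf necessarily leaves codimension-$2$ torsion in the quotient. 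I expect this enumeration-and-coprimality step to be the main obstacle.

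Finally for d), I would invoke the construction of Pingali \cite{PingaliCWForms}, which produces, for a rank-$2$ holomorphic bundle over a projective surface whose classes $c_1$, $c_2$, and $c_1^2 - c_2$ are numerically positive and which admits no proper line subbundle, an explicit Hermitian metric $h^E$ whose associated forms $c_1(E,h^E)$, $c_2(E,h^E)$, and $c_1(E,h^E)^{\wedge 2} - c_2(E,h^E)$ are pointwise positive. Parts a)--c) together with the Chern class computation supply precisely the hypotheses required, and applying Pingali's construction to $E_n$ produces the metric $h^{E_n}$ claimed in Theorem \ref{thm_dim2}.
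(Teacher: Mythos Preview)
Your Chern class computation and the reference to Fulton for a) match the paper. For d), note that the statement only asks that the \emph{cohomology classes} contain positive representatives; this is immediate from your computation, since $h$ and $h^2$ are represented by the Fubini--Study form and its square. Invoking Pingali here is misplaced: that construction is what the paper uses \emph{after} Proposition \ref{prop_prop_bundles_e} to prove Theorem \ref{thm_dim2}, and its actual input is stability (to obtain a Hermite--Einstein metric via Donaldson--Uhlenbeck--Yau), not the ``no line subbundle'' hypothesis you state.

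For b) and c) your approach diverges substantially from the paper, and the paper's arguments are much shorter. For c), the paper observes that on $\mathbb{P}^2$ every line bundle is some $\mathcal{O}(k)$, so $H^{0,1}(\mathbb{P}^2, \mathrm{Hom}(Q,S)) = H^{0,1}(\mathbb{P}^2, \mathcal{O}(k)) = 0$ and any short exact sequence $0 \to S \to E \to Q \to 0$ of line bundles would split; then $E \cong \mathcal{O}(k) \oplus \mathcal{O}(l)$ with $k,l > 0$ by positivity of $c_1, c_2$, hence ample, contradicting a). For b), the paper twists to $\tilde{E} = E \otimes \mathcal{O}(-n-2)$ with $c_1(\tilde{E}) = 0$ and applies Hartshorne's criterion that such a rank-$2$ bundle on $\mathbb{P}^k$ is stable iff $H^{0}(\tilde{E}) = 0$; this vanishing drops out of the long exact sequence of (\ref{eq_sh_ex}) twisted by $\mathcal{O}(-n-2)$ together with (\ref{eq_van_coh}). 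Both arguments avoid any enumeration of line quotients of $B \oplus C$.

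Your enumeration strategy could in principle be pushed through, but the sketch you give for c) is not yet an argument: the coprimality of the column entries of $\phi_n \oplus \psi_n$ only tells you the map has rank at least one everywhere (equivalently, that the cokernel is a vector bundle after verifying rank two), and it is not clear how this forces every saturated line subsheaf of $E$ to have non--locally-free quotient. You would need to control \emph{all} rank-one quotients of $B \oplus C$ that annihilate the image of $A$, which is exactly the laborious step you flag. The paper sidesteps this entirely by the Ext-splitting trick.
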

	\begin{proof}
		First of all, \textit{a)} was established in by Fulton \cite[proof of Proposition 5]{FultNumCrit}.
		Now, \textit{d)} easily follows from the identities $c_1(E_n) = (2n + 4) a$ and $c_2(E_n) = (n^2 + 8n + 16) a^2$, where $a = c_1( \mathcal{O}(1))$, which can be obtained from (\ref{eq_sh_ex}).
		To establish \textit{b)} and \textit{c)}, we need to recall, cf. \cite[Theorem VII.10.7]{DemCompl}, that for any $k, l \in \integ$, $l < 0$:
		\begin{equation}\label{eq_van_coh}
			H^{0, 0}(\mathbb{P}^2, \mathcal{O}(l)), \qquad H^{0, 1}(\mathbb{P}^2, \mathcal{O}(k)) = 0.
		\end{equation}
		\par 
		Now, let's prove \textit{c)}. As $E$ has rank $2$, we only need to prove that $E$ has no subbundle of rank $1$. Assume that we have a short exact sequence
		\begin{equation}\label{eq_ext}
			0 \xrightarrow{} S \xrightarrow{}
				E
				\xrightarrow{} Q
				\xrightarrow{} 0.
		\end{equation}
		Recall that there is a way to associate a cohomological class $\beta \in H^{0, 1}(\mathbb{P}^2, {\rm{Hom}}(Q, S))$ to the isomorphism class of a short exact sequence (\ref{eq_ext}), cf. \cite[Proposition V.14.9]{DemCompl}, and $\beta$ vanishes if and  only if (\ref{eq_ext}) is split.
		However, as over $\mathbb{P}^2$, any line bundle is isomorphic to $\mathcal{O}(k)$ for some $k \in \integ$, by (\ref{eq_van_coh}), we see that $H^{0, 1}(\mathbb{P}^2, {\rm{Hom}}(Q, S)) = \{0\}$.
		But then we have $E = \mathcal{O}(k) \oplus \mathcal{O}(l)$ for some $k, l \in \integ$.
		As $c_1(E), c_2(E)$ are positive, we deduce that $k, l  > 0$.
		But then $E$ has to be ample, which contradicts \textit{a)}.
		We conclude that \textit{c)} holds.
		\par 
		Let's now establish \textit{b)}.
		We denote $\tilde{E} := E \otimes \mathcal{O}(-n-2)$. Clearly, $c_1(\tilde{E}) = 0$ and $E$ is stable if and only if $\tilde{E}$ is stable.
		Recall that according to Hartshorne \cite[Lemma 3.1]{HartStab}, a rank $2$ vector bundle $F$ over $\mathbb{P}^k$, verifying $c_1(F) = 0$, is stable if and only if $H^{0, 0}(\mathbb{P}^k, F) = \{ 0\}$.
		\par 
		Let's verify that we have $H^{0, 0}(\mathbb{P}^2, \tilde{E}) = \{ 0 \}$.
		Define $\tilde{A} := A \otimes \mathcal{O}(-n-2)$, $\tilde{B} := B \otimes \mathcal{O}(-n-2)$, $\tilde{C} := C \otimes \mathcal{O}(-n-2)$.
		Multiply the short exact sequence (\ref{eq_sh_ex}) by $\mathcal{O}(-n-2)$ and consider the associated long exact sequence in cohomology
		\begin{equation}\label{long_ex_seq}
			0 \xrightarrow{} H^{0, 0}(\mathbb{P}^2, \tilde{A}) \xrightarrow{}
				H^{0, 0}(\mathbb{P}^2, \tilde{B} \oplus \tilde{C}) \xrightarrow{}
				H^{0, 0}(\mathbb{P}^2, \tilde{E}) \xrightarrow{}
				H^{0, 1}(\mathbb{P}^2, \tilde{A}) \xrightarrow{}
				\cdots.
		\end{equation}
		We now see from (\ref{eq_van_coh}) and (\ref{long_ex_seq}) that $H^{0, 0}(\mathbb{P}^2, \tilde{E}) = \{ 0 \}$, from which we conclude.
	\end{proof}
	Now let's proceed to the construction of $h^{E}$.
	Recall that Pingali in \cite[Theorem 1.1]{PingaliCWForms} proved that for any stable ample vector bundle $F$ of rank $\geq 2$ over a compact complex surface, one can construct a Hermitian metric $h^F$ with $c_1(F, h^{F}), c_2(F, h^{F}), c_1(F, h^{F})^{2} - c_2(F, h^{F})$ positive.
	A close inspection of his proof shows that he never uses ampleness of $F$ in his construction, and he only relies on the fact that in the cohomological classes $c_1(F), c_2(F), c_1(F)^{2} - c_2(F)$, one can find a positive differential form (which is always possible for ample vector bundles by the theorem of Kleiman \cite{Kleiman}).
	Hence, by Proposition \ref{prop_prop_bundles_e}b) and d), we can apply the construction of Pingali in our setting.
	To make this article self-contained, let's explain the last point better and adapt the construction from \cite{PingaliCWForms} to our situation.
	\par 
	We fix a Kähler form $\omega$ on $\mathbb{P}^2$.
	Since $E$ is stable, it carries the Hermite-Einstein metric $h^E_{0}$, see Donaldson \cite{DonaldASD}, Uhlenbeck-Yau \cite{UhlYau}, cf. \cite[Theorem 6.10.19]{KobaVB}. It means that the curvature $R^E_{0}$ of the associated Chern connection satisfies
	\begin{equation}
		\imun R^E_{0} \wedge \omega = \lambda \omega^2,
	\end{equation}
	for some (topological) constant $\lambda > 0$.
	Then Kobayashi-Lübke inequality, cf. \cite[proof of Theorem 4.4.7]{KobaVB}, gives the following 
	\begin{equation}\label{kob_lub}
		c_1(E, h^E_{0})^{\wedge 2} - 4 c_2(E, h^E_{0}) \leq 0.
	\end{equation}
	We will now show that one can choose a suitable function $\phi : \mathbb{P}^2 \to \real$, for which $h^E := h^E_{0} e^{- \phi}$ satisfies the assumptions of Theorem \ref{thm_dim2}. 
	Clearly we have
	\begin{equation}
		c_1(E, h^E) = c_1(E, h^E_{0}) + \frac{\imun}{2\pi} \partial \dbar \phi. 
	\end{equation}
	An easy calculation, cf. \cite[(2.4)]{PingaliCWForms}, shows that
	\begin{equation}\label{eq_MA_eq1}
		c_1(E, h^E)^{\wedge 2} - c_2(E, h^E)
		=
		\frac{1}{4}  \big(
			c_1(E, h^E_{0})^{\wedge 2} - 4 c_2(E, h^E_{0})
		\big)
		+
		\frac{3}{4}
		\Big(
			c_1(E, h^E_{0})
			+
		 	\frac{\imun}{2\pi} \partial \dbar \phi
		\Big)^{\wedge 2}.
	\end{equation}
	Recall that $c_1(E)^{2} - c_2(E)$ has a positive representative, which we denote by $\eta$.
	From this and (\ref{kob_lub}), the right-hand side of the following equation is positive
	\begin{equation}\label{eq_MA_eq2}
		\frac{3}{4}
		\Big(
			c_1(E, h^E_{0})
			+
		 	\frac{\imun}{2\pi} \partial \dbar \phi
		\Big)^{\wedge 2}
		=
		\eta
		-
		\frac{1}{4}  \big(
			c_1(E, h^E_{0})^{\wedge 2} - 4 c_2(E, h^E_{0})
		\big).
	\end{equation}
	By this, (\ref{eq_MA_eq1}), and by the fact that $c_1(E)$ is positive, Yau's theorem \cite{YauThm} can be applied to give $\phi$ which solves (\ref{eq_MA_eq2}) and satisfies
	\begin{equation}\label{eq_aux_3}
		c_1(E, h^E_{0})
			+
		 	\frac{\imun}{2\pi} \partial \dbar \phi
		 > 0.
	\end{equation}	
	Clearly, the left-hand side of (\ref{eq_aux_3}) is equal to $c_1(E, h^E)$ for $h^E := h^E_{0} e^{- \phi}$.
	Hence, by (\ref{eq_aux_3}), $c_1(E, h^E)$ is positive.
	From the construction of $\phi$ and (\ref{eq_MA_eq1}), we also deduce that $c_1(E, h^E)^{\wedge 2} - c_2(E, h^E) = \eta$ is positive.
	Finally, from (\ref{eq_MA_eq2}), one can easily deduce that 
	 \begin{equation}
	 	c_2(E, h^E) 
	 	=
	 	\frac{1}{3}
	 	\Big(
	 		4c_2(E, h^E_{0})  - c_1(E, h^E_{0})^{\wedge 2} + \eta
	 	\Big),
	 \end{equation}
	This clearly establishes the positivity of $c_2(E, h^E)$ by (\ref{kob_lub}) and the positivity of $\eta$. 
	 
	 \appendix
	 \section{An algebraic proof of a non-negative version of Theorem \ref{thm_pos}}\label{sect_str_pos}
		In this section we give an alternative proof of the non-negative version of the proof of Theorem \ref{thm_pos}. 
		The main idea is to use the characterizations for (dual) Nakano non-negative vector bundles from Theorems \ref{prop_dual_nak_bc_pos2}, \ref{prop_nak_bc_pos} and to construct explicitly the forms needed in the definition of positivity.
		\par 
		We start by recalling some notations. We denote by $M_r(\comp)$ and $GL_r(\comp)$ the vector space of $r \times r$ matrices and the general linear group of degree $r$.
		A map $f : M_r(\comp) \to \comp$ is called $GL_r(\comp)$-invariant if it is invariant under the conjugate action of $GL_r(\comp)$ on $M_r(\comp)$.
		\par 
		We now define the functions $c_i : M_r(\comp) \to \comp$, $i = 1, \ldots, r$, by the following identity
		\begin{equation}
			\det \big( {\rm{Id}}_r + tX \big)
			=
			\sum_{i = 0}^{r} 
			t^i
			\cdot
			c_i(X).
		\end{equation}
		Then $c_i$ are trivially $GL_r(\comp)$-invariant and, moreover, $c_i$ is a polynomial of degree $i$.
		Also, it is well-known that the graded ring of $GL_r(\comp)$-invariant homogeneous polynomials on $M_r(\comp)$, which we denote here by $I = \oplus_{i = 0}^{+ \infty} I_i$, is multiplicatively generated by $c_1, \ldots , c_r$.
		\par 
		Now, if $(E, h^E)$ is a Hermitian vector bundle, then by (\ref{eq_defn_chern}), we have
		\begin{equation}
			c_i(E, h^E) = c_i \Big( \frac{\imun R^E}{2\pi} \Big).
		\end{equation}
		\par 
		Griffiths in \cite[p. 242, (5.6)]{GriffPosVect} showed that any $f \in I_i$ can be written in the form
		\begin{equation}\label{eq_form_hom_pol}
			f(X) 
			=
			\sum_{\sigma, \tau \in S_i} \sum_{\rho \in [1, r]^{i}}
			p_{\rho \sigma \tau} X_{\rho_{\sigma(1)} \rho_{\tau(1)} }  \cdots  X_{\rho_{\sigma(i)} \rho_{\tau(i)} },
		\end{equation}
		where $X_{\lambda \mu}$, $\lambda, \mu = 1, \ldots, r$ are the components of the matrix $X$, $S_i$ is the permutation group on $i$ indices and $[1, r] := \{1, \ldots, r \}$.
		\par 
		A function $f$ is called \textit{Griffiths non-negative} if it can be expressed in the form (\ref{eq_form_hom_pol})  with
		\begin{equation}
			p_{\rho \sigma \tau} =
			\sum_{t \in T} 
			\lambda_{\rho t}
			\cdot
			q_{\rho \sigma t}
			\overline{q}_{\rho \tau t},
		\end{equation}
		for some finite set $T$, some real numbers $\lambda_{\rho t} \geq 0$ and complex numbers $q_{\rho \sigma j}$. We denote by $\Pi_i \subset I_i$ the cone of Griffiths non-negative polynomials. The following theorem provides an “algebraic" description of this cone.
		\begin{thm}[Fulton-Lazarsfeld {\cite[Propositon A.3]{FulLazPos}}]\label{thm_ful_laz_cone}
			The following identity holds
			\begin{equation}
				\Pi_i = 
				\Big\{ 
					\sum_{a \in \Lambda(i, r)}
					b_{a}
					\cdot
					P_a(c_1, \ldots, c_r) : \text{ all } b_{a} \geq 0 
				\Big\},
			\end{equation}
			where $P_a$ are Schur polynomials defined in (\ref{defn_schur}).
		\end{thm}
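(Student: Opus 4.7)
The plan is to establish Theorem \ref{thm_ful_laz_cone} by proving the two inclusions separately.

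For the inclusion $\{\sum_a b_a P_a : b_a \geq 0\} \subseteq \Pi_i$, it suffices to show $P_a \in \Pi_i$ for every $a \in \Lambda(i,r)$. My strategy would be representation-theoretic: identify $P_a(X)$ with the character of the Schur functor $\mathbb{S}_a V$ evaluated at $X \in \enmr{V}$ where $V = \comp^r$, via the formula $P_a(X) = \frac{1}{h_a} \tr{c_a \circ X^{\otimes i}}$, with $c_a \in \comp[S_i]$ the Young symmetrizer associated to $a$ and $h_a > 0$ a combinatorial constant. Since $c_a$ is proportional to $c_a^* c_a$ up to a positive factor, it is a positive semidefinite element of $\comp[S_i]$ acting on $V^{\otimes i}$, and can be written as $\sum_t |\psi_t\rangle\langle\psi_t|$ for vectors $\psi_t \in V^{\otimes i}$. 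Expanding each $\psi_t$ in the standard tensor basis and inserting into the trace formula then yields an expression of the form (\ref{eq_form_hom_pol}) whose coefficients $p_{\rho\sigma\tau}$ manifestly decompose as $\sum_t \lambda_{\rho t} q_{\rho\sigma t}\overline{q_{\rho\tau t}}$ with $\lambda_{\rho t} \geq 0$, placing $P_a$ inside $\Pi_i$.

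For the reverse inclusion $\Pi_i \subseteq \{\sum_a b_a P_a : b_a \geq 0\}$, I would use that $\{P_a\}_{a\in\Lambda(i,r)}$ is a $\rat$-basis of $I_i$, so any $f \in \Pi_i$ has a unique expansion $f = \sum_a b_a P_a$, and the task reduces to proving $b_a \geq 0$. The strategy is to construct, for each $a \in \Lambda(i,r)$, a ``test" curvature-like matrix $X_a$ on which the evaluation $f(X_a)$ extracts precisely the coefficient $b_a$ (up to a positive scalar), while keeping $X_a$ inside a class of matrices for which Griffiths non-negativity of $f$ gives a pointwise non-negativity statement. Schubert calculus provides natural candidates: on the Grassmannian $G = {\rm{Gr}}_\comp(r, N)$ for $N \gg r$, the dual universal quotient bundle admits a Fubini-Study type metric with an explicit positive curvature, and the cohomological orthogonality $\int_G P_a(c) \cdot [\Omega_{a^\vee}] = \delta_{aa'}$ isolates a single coefficient when paired against the Schubert cycle $[\Omega_{a^\vee}]$.

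The main obstacle is the reverse direction. The delicate step is converting the Schubert-cycle orthogonality, which lives in cohomology, into a pointwise positivity statement that Griffiths non-negativity of $f$ can exploit. Two routes are available: one may either invoke the Bloch-Gieseker theorem to replace Schubert cycles by smooth closed positive currents and run a direct positivity argument, or, following Fulton-Lazarsfeld, bypass this by proving the dual characterization of $\Pi_i$ via the natural $U(r)$-invariant inner product on $I_i$ and establishing that the dual cone $\Pi_i^*$ is spanned by the Schur basis itself. Either way, the essential ingredient beyond pure algebra is a geometric/analytic positivity input of Bloch-Gieseker type.
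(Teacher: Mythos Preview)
The paper does not prove this theorem: it is stated with attribution to Fulton--Lazarsfeld \cite[Proposition A.3]{FulLazPos} and used as a black box in Appendix~\ref{sect_str_pos}. There is therefore no ``paper's own proof'' to compare your proposal against.

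That said, your outline is broadly in the spirit of the original Fulton--Lazarsfeld argument, which is indeed representation-theoretic for the inclusion $P_a \in \Pi_i$ and uses a duality/positivity argument for the reverse inclusion. One technical point in your forward direction deserves care: the Young symmetrizer $c_a = a_\lambda b_\lambda$ (row symmetrizer times column antisymmetrizer) is \emph{not} self-adjoint in $\comp[S_i]$, so the claim that ``$c_a$ is proportional to $c_a^* c_a$'' is false as written. What is true is that $c_a$ is quasi-idempotent and its image in $V^{\otimes i}$ realizes $\mathbb{S}_a V$; to get a genuinely positive-semidefinite operator you should replace $c_a$ by the orthogonal projector onto that image (or, equivalently, work with an orthonormal basis of $\mathbb{S}_a V$ and express the character as a sum of diagonal matrix coefficients). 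Once that substitution is made, your extraction of coefficients $p_{\rho\sigma\tau}$ in the required form goes through. For the reverse inclusion, your identification of the obstacle is accurate: one needs an external positivity input (Griffiths' original lemma that $f \in \Pi_i$ forces non-negative integrals over subvarieties for Griffiths-positive bundles, combined with the Fulton--Lazarsfeld determination of the numerically positive cone), and this is exactly what the cited reference supplies.
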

		\begin{proof}[Proof of non-negative version of Theorem \ref{thm_pos}]
			First of all, according to Theorem \ref{prop_dual_nak_bc_pos2}, Bott-Chern non-negativity is equivalent to dual Nakano non-negativity. Also Li \cite[\S 8]{LiPingChen} established Theorem \ref{thm_pos} for Bott-Chern non-negative vector bundles. Hence, we only need to establish Theorem \ref{thm_pos} for Nakano non-negative vector bundles. 
			We follow closely the idea from Li \cite[\S 8]{LiPingChen}.
			\par 
			By Theorem \ref{prop_nak_bc_pos}, around any $x \in X$, there are unitary frames $e_1, \ldots, e_r$ of $(E, h^E)$ and of $(T_x^{1, 0}X, h^{TX})$, and a matrix $B$ with $\rk{E}$ rows and $N$ columns, whose entries are $(0, 1)$-forms, and such that the identity
			\begin{equation}\label{eq_re_b_cond}
				R^E = -B \wedge \overline{B}^T,
			\end{equation}
			holds at the point $x$ in the fixed unitary frames.
			More precisely, we decompose $B = (B_{\lambda \nu})$, $\lambda = 1, \ldots, r$; $\nu = 1, \ldots, N$, as follows
			\begin{equation}
				B_{\lambda \nu} 
				= 
				\sum_{j = 1}^{n}
				B_{\lambda \nu}^{j} d\overline{z}_j.
			\end{equation}
			Then our condition (\ref{eq_re_b_cond}) means that for $\lambda, \mu = 1, \ldots, r$, the $(\lambda, \mu)$-component $R^E_{\lambda \mu}$ of $R^E$ (where $R^E$ is viewed here as an element of $\enmr{E}$) satisfies
			\begin{equation}\label{eq_re_corrd_ak_de}
				R^E_{\lambda \mu}
				=
				\sum_{j, k = 1}^{n}
				\sum_{\nu = 1}^{N}
				B_{\lambda \nu}^{j}
				\overline{B}_{\mu \nu}^{k}
				dz_k \wedge d \overline{z}_j.
			\end{equation}
			\par 
			We now fix $a \in \Lambda(i, r)$ and use Theorem \ref{thm_ful_laz_cone} to write
			\begin{equation}\label{eq_fin_pfa9}
				P_{a}(c(E, h^E))
				=
				\Big(
					\frac{\imun}{2 \pi}
				\Big)^{i}
				\sum_{\sigma, \tau \in S_i} \sum_{\rho \in [1, r]^{i}}
				\Big(
					\sum_{t \in T} 
					\lambda_{\rho t}
					\cdot
					q_{\rho \sigma t}
					\overline{q}_{\rho \tau t}
				\Big)
				\cdot
				\Big(
					\bigwedge_{l = 1}^{i}
					R^{E}_{\rho_{\sigma(l)} \rho_{\tau(l)}}
				\Big),
			\end{equation}
			for some finite set $T$, real $\lambda_{\rho t} \leq 0$ and complex $q_{\rho \sigma t}$.
			From (\ref{eq_re_corrd_ak_de}), for any $\rho \in [1, r]^{i}$ and $\sigma, \tau \in S_i$, the following identity holds
			\begin{equation}\label{eq_fin_pfa10}
				\bigwedge_{l = 1}^{i}
				R^{E}_{\rho_{\sigma(l)} \rho_{\tau(l)} }
				=
				\sum_{j, k \in [1, n]^{i}}
				\sum_{\nu \in [1, N]^{i}}
				\Big(
				\prod_{l = 1}^{i}
				 B_{\rho_{\sigma(l)} \nu_l}^{j_l}
				\overline{B}_{\rho_{\tau(l)} \nu_l}^{k_l}
				\Big)
				\cdot
				\bigwedge_{l = 1}^{i}
				dz_{k_l} \wedge d \overline{z}_{j_l}.
			\end{equation}
			\par 
			For $\rho \in [1, r]^{i}$, $t \in T$, $\nu \in [1, N]^{i}$, we now introduce the $(1, 0)$-forms $\psi_{\rho t \nu}$, defined by 
			\begin{equation}
				\psi_{\rho t \nu}
				:=
				\sum_{\tau \in S_i}
				\overline{q}_{\rho \tau t}
				\sum_{k \in [1, n]^{i}}
				\Big(
					\prod_{l = 1}^{i}
					\overline{B}_{\rho_{\tau(l)} \nu_l}^{k_l}
				\cdot
				\bigwedge_{l = 1}^{i}
				dz_{k_l}
				\Big).
			\end{equation}
			Then for any $\rho, t, \nu$ as above, we have the following identity
			\begin{equation}\label{eq_fin_pfa12}
				\sum_{\sigma, \tau \in S_i}
				q_{\rho \sigma t}
				\overline{q}_{\rho \tau t}
				\sum_{j, k \in [1, n]^{i}}
				\bigg(
				\Big(
				\prod_{l = 1}^{i}
				 B_{\rho_{\sigma(l)} \nu_l}^{j_l}
				\overline{B_{\rho_{\tau(l)} \nu_l}^{k_l}}
				\Big)
				\cdot
				\Big(
				\bigwedge_{l = 1}^{i}
				dz_{k_l} 
				\wedge 
				\bigwedge_{l = 1}^{i}
				d \overline{z}_{j_l}
				\Big)
				\bigg)
				=
				\psi_{\rho t \nu}
				\wedge
				\overline{\psi_{\rho t \nu}}.
			\end{equation}
			From (\ref{eq_fin_pfa9}), (\ref{eq_fin_pfa10}) and (\ref{eq_fin_pfa12}), we obtain
			\begin{equation}
				P_{a}(c(E, h^E))
				=
				\Big(
					\frac{\imun}{2 \pi}
				\Big)^{i}
				(-1)^{\frac{i(i-1)}{2}}
				\sum_{\rho \in [1, r]^{i}} 
				\sum_{t \in T} 
				\sum_{\nu \in [1, N]^{i}}
				\lambda_{\rho t}
				\cdot
				\psi_{\rho t \nu}
				\wedge
				\overline{\psi_{\rho t \nu}},
			\end{equation}
			which exactly means that $P_{a}(c(E, h^E))$ is positive, since $(\imun)^{i} (-1)^{\frac{i(i-1)}{2}} = (\imun)^{i^2}$.
		\end{proof}
		\begin{rem}\label{rem_concl}
			In conclusion, we say that by this method we are only able to prove non-negativity of Schur forms for (dual) Nakano non-negative vector bundles. For positivity statement, it is not clear if one can refine the above method due to Remark \ref{rem_bc}b) and the fact that there is little known about the specific structure of the forms $\psi_{\rho t \nu}$ above.
		\end{rem}
		
\bibliography{bibliography}

		\bibliographystyle{abbrv}

\Addresses

\end{document}